\documentclass{amsart}

\usepackage{amssymb}
\usepackage{color}
\usepackage{tikz-cd}
\usepackage{stmaryrd}
\usepackage{hyperref}
\hypersetup{
    colorlinks=true,
    linkcolor=black,
    filecolor=blue,      
    urlcolor=blue,
    citecolor=blue,
    pdfpagemode=FullScreen,
    }
\parindent=0pt
\parskip=4pt

\usepackage[margin=1.6 in, top=1.4in, bottom=1.4 in]{geometry}
\usepackage{enumitem}

\newtheorem{thm}{Theorem}[section]
\newtheorem{lem}[thm]{Lemma}
\newtheorem{cor}[thm]{Corollary}
\newtheorem{prop}[thm]{Proposition}

\newtheorem{thm*}{Theorem}

\theoremstyle{definition}
\newtheorem{definition}[thm]{Definition}

\theoremstyle{remark}
\newtheorem{remark}[thm]{Remark}

\numberwithin{equation}{section}

\newcommand{\R}{{\mathbb{R}}}
\newcommand{\Z}{{\mathbb{Z}}}

\newcommand{\Q}{{\mathbb{Q}}}

\newcommand{\wh}[1]{\widehat{#1}}
\newcommand{\ol}[1]{\overline{#1}}

\newcommand{\overbar}{\overline}
\newcommand{\om}{\omega}

\newcommand{\la}{\lambda}

\newcommand{\eps}{\epsilon}

\newcommand{\cI}{\mathcal{I}}

\newcommand{\cO}{\mathcal{O}}

\newcommand{\cP}{\mathcal{P}}

\newcommand{\cN}{\mathcal{N}}

\newcommand{\rS}{\mathrm{S}}
\newcommand{\fix}{\mathrm{Fix}}
\newcommand{\per}{\mathrm{Per}}

\renewcommand{\hat}{\wh}

\DeclareMathOperator{\supp}{\mathrm{supp}}

\DeclareMathOperator{\Ham}{\mathrm{Ham}}

\DeclareMathOperator{\Symp}{\mathrm{Symp}}

\DeclareMathOperator{\id}{\mathrm{id}}

\DeclareMathOperator{\loc}{\mathrm{loc}}

\DeclareMathOperator{\cm}{\mathrm{CM}}
\DeclareMathOperator{\hm}{\mathrm{HM}}
\DeclareMathOperator{\cn}{\mathrm{CN}}
\DeclareMathOperator{\hn}{\mathrm{HN}}
\DeclareMathOperator{\cfn}{\mathrm{CFN}}
\DeclareMathOperator{\hfn}{\mathrm{HFN}}
\DeclareMathOperator{\cf}{\mathrm{CF}}
\DeclareMathOperator{\hf}{\mathrm{HF}}
\DeclareMathOperator{\h}{\mathrm{H}}

\DeclareMathOperator{\cz}{\mathrm{CZ}}

\newcommand{\mbmail}{marta.batoreo@ufes.br}
\newcommand{\bfmail}{brayan.ferreira@ufes.br}
\newcommand{\msaemail}{marcelo.sarkis.atallah@umontreal.ca}

\begin{document}

\title{The number of periodic points of surface symplectic~diffeomorphisms}

\author{Marcelo S. Atallah}
\address{Marcelo S. Atallah, Department of Mathematics and Statistics,
	University of Montreal, C.P. 6128 Succ.  Centre-Ville Montreal, QC
	H3C 3J7, Canada}
\email{\msaemail}

\author{Marta Batoréo}
\address{Marta Batoréo, Universidade Federal do Espírito Santo, Departamento de Matemática/CCE, 
Av. Fernando Ferrari, 514, Goiabeiras – Vitória – ES, Brasil, 29075-910}
\email{\mbmail}

\author{Brayan Ferreira}
\address{Brayan Ferreira, Universidade Federal do Espírito Santo, Departamento de Matemática/CCE, 
Av. Fernando Ferrari, 514, Goiabeiras – Vitória – ES, Brasil, 29075-910}
\email{\bfmail}


\begin{abstract}
We use symplectic tools to establish a smooth variant of Franks theorem for a closed orientable surface of positive genus $g$; it implies that a symplectic diffeomorphism isotopic to the identity with more than $2g-2$ fixed points, counted homologically, has infinitely many periodic points. Furthermore, we present examples of symplectic diffeomorphisms with a prescribed number of periodic points. In particular, we construct symplectic flows on surfaces possessing only one fixed point and no other periodic orbits. 
\end{abstract}

\maketitle
\setcounter{tocdepth}{1}
\tableofcontents
\section{Introduction and main results}
\subsection{A Symplectic analog of a theorem of Franks}
A remarkable theorem of Franks \cite{franks1992geodesics, franks1996area} shows that any area preserving homeomorphism of the two-sphere has either two or infinitely many periodic points. Collier, Kerman, Reiniger, Turmunkh and Zimmer \cite{collier2012symplectic} used only symplectic tools to prove a refinement of Franks theorem in the smooth category. It provided evidence in favor of a higher dimensional analog of this dynamical phenomenon which Hofer and Zehnder conjectured to hold for Hamiltonian diffeomorphisms. 

In contrast to the case of the sphere, Conley conjectured \cite{Co} that every Hamiltonian diffeomorphism of the $2$-torus possess infinitely many periodic points. In a positive answer to the conjecture, Franks and Handel \cite{franks2003periodic} showed that every Hamiltonian diffeomorphism of a closed orientable surface $\Sigma$ of positive genus $g$ has periodic points of arbitrarily large period; see also \cite{conley1984subharmonic, LeCalvez05, le2006periodic, hingston2009subharmonic, ginzburg2010conley, ginzburg2012conley, ginzburg2019conley}. A recent result due to Prasad \cite{prasad2023periodic}, independently and simultaneously proved by Guiheneuf, Le Calvez and Passeggi \cite{guiheneuf2023area},  implies that the same is true for symplectic diffeomorphisms of $\Sigma$ that are isotopic to the identity and whose flux is a multiple of a rational class in $\h^1(\Sigma)$. 

On the other hand, an irrational translation on the $2$-torus has no periodic points and, furthermore, for every genus $g\geq 2$ there are examples of symplectic flows with exactly $2g-2$ fixed points and no other periodic orbits; see \cite{batoreo2018periodic} and Theorem \ref{thm:2g-2_fx_pts} for two different constructions. In fact, in this article, we construct on a genus $g\geq2$ surface a symplectic diffeomorphism with only one periodic point; see Theorem \ref{thm:1_fx_pt}. Moreover, as described in Remark \ref{rmk:LC-parasitic}, one can produce examples of symplectic diffeomorphisms on $\Sigma$ with exactly $m$ periodic points (which are also fixed) for all $m\geq1$ by adding \emph{parasitic} fixed points to the symplectic diffeomorphism with exactly one periodic point obtained in Theorem~\ref{thm:1_fx_pt}.

The classification of Le Calvez \cite{LeCalvez_conservative2022} implies that a symplectic diffeomorphism of infinite order of a closed surface of genus $g\geq2$ either has periodic points of arbitrarily large period or has an iterate such that all the periodic points are fixed and which is isotopic to the identity relative the set of fixed points; the classification is similar in the case of the torus.  

Our first result shows that the modulus of the Euler characteristic of $\Sigma$ provides an upper bound for the count of fixed points, weighted by the rank of their local Floer homology group, of a symplectic diffeomorphism that is isotopic to the identity and possesses finitely many periodic points. Here, we only count fixed points  that are contractible with respect to a path of symplectic diffeomorphisms $\psi_t$ based at the identity with endpoint $\psi_1=\psi$; see Definition \ref{def:contractible}. When $g\geq2$, this definition only depends on the time-one map $\psi$ since the flux group vanishes.

\begin{thm}\label{thm:main_surfaces}
Let $(\Sigma,\om)$ be a closed orientable surface of positive genus $g$ and let $\psi\in\Symp_0(\Sigma,\om)$ have finitely many contractible fixed points with respect to a path $\psi_t$ of symplectic diffeomorphisms. If
	\begin{equation}\label{ineq_floc}
		\displaystyle\sum_{x\in\fix_0(\psi_t)}\dim\hf^{\loc}(\psi_t, x)> 2g-2,
	\end{equation}
	then $\psi$ has a simple $p$-periodic point for each sufficiently large prime $p$, which is contractible with respect to $\psi_t^p$. On the other hand, if $\psi$ has finitely many periodic points, then the mean index $\Delta(x,\psi_t)$ vanishes for all $x \in \mathrm{Fix}_0(\psi_t)$.
\end{thm}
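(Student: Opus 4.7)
The plan is to combine a Smith/Shelukhin-type inequality for local Floer homology with the Ginzburg-Gurel iteration--mean-index formalism. The main technical input is the Floer--Novikov homology $\hf(\psi_t; \Lambda)$ of the symplectic isotopy $\psi_t$, defined using a Novikov ring $\Lambda$ associated to the flux class $\flux(\psi_t) \in \rH^1(\Sigma;\R)$. Positive genus rules out sphere bubbling, so the Floer complex built from capped contractible fixed points is well-defined, and a Novikov--Morse computation on $\Sigma$ identifies $\dim_\Lambda \hf(\psi_t;\Lambda)$ with the Novikov--Betti number $2g - 2$ whenever $\flux(\psi_t)\neq 0$. Under iteration, $\flux(\psi_t^p) = p\cdot\flux(\psi_t)$, which remains nonzero for $g\geq 2$ (and, on the torus, after restricting to primes that do not kill the flux class modulo the flux group), so the same rank bound $2g - 2$ applies to $\hf(\psi_t^p)$.

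For the first assertion I would argue by contraposition: assume that for some sufficiently large prime $p$ there is no simple $p$-periodic point contractible with respect to $\psi_t^p$. Since $p$ is prime, every fixed point of $\psi^p$ has period $1$ or $p$, so the assumption forces $\fix_0(\psi_t^p) = \fix_0(\psi_t)$. Apply the Floer-theoretic Smith inequality for the $\Z/p$-rotation action on the free loop space (the symplectic analog of Shelukhin's inequality in the Hamiltonian case), which in our setting takes the form
\begin{equation*}
\sum_{x \in \fix_0(\psi_t)} \dim_{\F_p} \hf^{\loc}(\psi_t, x; \F_p) \;\leq\; \dim_{\F_p((u))} \hf(\psi_t^p;\, \Lambda \otimes \F_p((u))) \;\leq\; 2g - 2,
\end{equation*}
where the middle term is the localized $\Z/p$-equivariant Floer homology of the $p$-th iterate. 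This contradicts \eqref{ineq_floc}, proving that a contractible simple $p$-periodic point must exist.

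For the second assertion, assume $\psi$ has finitely many periodic points, so $\fix_0(\psi_t^p) = \fix_0(\psi_t)$ for all sufficiently large primes $p$. Suppose, for contradiction, that some $x_0 \in \fix_0(\psi_t)$ has $\Delta(x_0, \psi_t)\neq 0$. By the Ginzburg--Gurel iteration dictionary, for admissible primes $p$ (all but finitely many) the local Floer homology $\hf^{\loc}(\psi_t^p, x_0)$ is obtained from $\hf^{\loc}(\psi_t, x_0)$ by a grading shift of $p\cdot\Delta(x_0,\psi_t)$. Summing over $\fix_0(\psi_t)$ and invoking the Smith inequality together with the persistent $2g - 2$ rank bound on $\hf(\psi_t^p)$ yields a contradiction: the shifted chain-level contributions cannot remain supported inside a homology of bounded total rank as $p\to\infty$.

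The main obstacle is establishing the displayed Smith-type inequality in the Novikov-coefficient setting of symplectic (not necessarily Hamiltonian) diffeomorphisms. This requires an equivariant Floer framework for the $\Z/p$-rotation action compatible with the Novikov ring determined by the flux class, together with a matching localization theorem identifying the localized equivariant Floer homology of $\psi_t^p$ with an invariant built from $\psi_t$. Once this equivariant package is in place, the remaining ingredients---the Novikov--Betti computation of $\hf(\psi_t;\Lambda)$ on a surface and the iteration--mean-index formula for local Floer homology---are essentially standard bookkeeping.
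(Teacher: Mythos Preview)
Your approach is genuinely different from the paper's, and the central step is a gap rather than a mere technical obstacle. The displayed Smith-type inequality
\[
\sum_{x\in\fix_0(\psi_t)}\dim\hf^{\loc}(\psi_t,x)\;\le\;\dim\hf_{\Z/p}(\psi_t^p)\;\le\;2g-2
\]
does not follow from the classical Smith/localization formalism. Localization identifies the localized $\Z/p$-equivariant Floer homology of $\psi_t^p$ with an invariant of $\psi_t$, not with something of rank $\le 2g-2$; and the non-localized equivariant rank is bounded by $\dim\hfn(\psi_t^p)$ only after further argument. Shelukhin's route from a Smith inequality to a Hofer--Zehnder statement crucially uses semisimplicity of quantum homology (to bound boundary depth), and the paper explicitly observes that this mechanism is unavailable here: Pieloch's theorem forces semisimple manifolds to have finite $\pi_1$, ruling out positive-genus surfaces. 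Without a replacement for semisimplicity you have no way to pass from the hypothesis \eqref{ineq_floc} to a contradiction.

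The paper instead proves a dichotomy (Theorem~\ref{thm:dichotomy}): assuming \eqref{ineq_floc}, either some $x\in\fix_0(\psi_t)$ has $\hf^{\loc}(\psi_t,x)\neq 0$ and $\Delta(\psi_t,x)\neq 0$, or there is a symplectically degenerate extremum. The first alternative is handled by Theorem~\ref{thm:mean index-zero} via a blow-up/gluing construction on $\Sigma$; the second by adapting Ginzburg's SDM machinery to Floer--Novikov homology. For the second assertion, your sketch covers only fixed points with non-trivial local Floer homology; the paper needs a separate blow-up/gluing argument (doubling $\Sigma$ at the offending point to force a rank mismatch $4g-4$ versus $4g-2$) to handle the case $\hf^{\loc}(\psi_t,x)=0$, which your proposal does not address.
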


Note that the left-hand-side of \eqref{ineq_floc} is always bounded from below by $2g-2$; this follows from \cite{van1995symplectic} or, in the case of surfaces, from the discussion in Remark \ref{rmk:LeCalvez}. Furthermore, we observe that $\dim\hf^{\loc}(\psi_t, x)=1$ when $x$ is a nondegenerate fixed point of $\psi$; see Section \ref{sec:localFH} for details. In particular, Theorem \ref{thm:main_surfaces} immediately implies the following corollary which generalizes a  result of Ginzburg and G\"{u}rel \cite[Theorem~1.7]{GG09_genericexistence} for the $2$-torus.

\begin{cor}\label{cor:main_surfaces}
Suppose $\psi\in\Symp_0(\Sigma,\om)$ is nondegenerate and has finitely many contractible fixed points with respect to a path $\psi_t$ of symplectic diffeomorphisms. If
	\begin{equation*}
		\#\fix_0(\psi_t)> 2g-2,
	\end{equation*}
	then $\psi$ has a simple $p$-periodic point for each sufficiently large prime $p$, which is contractible with respect to $\psi_t^p$. On the other hand, if $\psi$ has finitely many periodic points, then the mean index $\Delta(x,\psi_t)$ vanishes for all $x \in \mathrm{Fix}_0(\psi_t)$.
\end{cor}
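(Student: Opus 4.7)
The plan is to deduce Corollary 1.2 as an immediate specialization of Theorem 1.1, once the local Floer homology of a nondegenerate fixed point is pinned down. Specifically, I would first invoke the fact stated just before the corollary (and to be justified in the local Floer homology section) that $\dim \hf^{\loc}(\psi_t,x) = 1$ whenever $x$ is a nondegenerate fixed point of $\psi$. This rests on the standard observation that near such an isolated nondegenerate critical point of the action functional, the local Floer complex is generated by the single orbit through $x$, which is automatically a cycle, and whose Conley–Zehnder index determines the single nontrivial grading; hence the local homology is one-dimensional.

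With this input in hand, the weighted sum appearing on the left of inequality \eqref{ineq_floc} collapses to the naive count
\[
\sum_{x \in \fix_0(\psi_t)} \dim \hf^{\loc}(\psi_t,x) \;=\; \#\fix_0(\psi_t),
\]
so the hypothesis $\#\fix_0(\psi_t) > 2g-2$ is literally the hypothesis of Theorem \ref{thm:main_surfaces} applied to the path $\psi_t$. Both conclusions of the corollary, namely the existence, for every sufficiently large prime $p$, of a simple $p$-periodic point of $\psi$ contractible with respect to $\psi_t^p$, and the vanishing of the mean index $\Delta(x,\psi_t)$ for every $x\in\fix_0(\psi_t)$ when $\psi$ has finitely many periodic points, then transfer verbatim from Theorem \ref{thm:main_surfaces}.

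Because the corollary is a direct specialization of the main theorem, I do not expect any genuine obstacle in the argument. The only technical point worth verifying carefully is the dimension computation $\dim\hf^{\loc}(\psi_t,x)=1$ at a nondegenerate fixed point in the symplectic (rather than purely Hamiltonian) setting; this is the only place where the nondegeneracy assumption on $\psi$ enters, and it is handled uniformly in the local Floer homology section of the paper.
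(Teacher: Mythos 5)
Your proposal is correct and matches the paper's own argument: the paper notes right before stating the corollary that $\dim\hf^{\loc}(\psi_t,x)=1$ for nondegenerate $x$ (justified in Section~\ref{sec:localFH}, where the local Floer complex at a nondegenerate orbit is a single $\Q$ in degree $\cz(\psi_t,x)$), so the weighted count in \eqref{ineq_floc} reduces to $\#\fix_0(\psi_t)$ and both conclusions follow directly from Theorem~\ref{thm:main_surfaces}.
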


Theorem \ref{thm:main_surfaces} can be understood as a generalization of Franks theorem for surfaces of higher genus. Indeed, $\Ham(\rS^2)=\Symp_0(\rS^2)$, and by Franks result if $$\#\fix_0(\psi_t)>|\chi(\rS^2)|=2,$$ then $\psi$ has infinitely many periodic points. Note that a direct generalization of the statement, with a simple count of fixed points, is false for  surfaces of higher genus. As previously mentioned, there are examples of symplectic diffeomorphisms with exactly $m$ periodic points for all $m\geq1$, in particular, for $m>|\chi(\Sigma)|=2g-2$.  

\begin{remark}\label{rmk:LeCalvez}
It is possible to appeal to results in low-dimensional conservative dynamics to reprove the first part of Theorem \ref{thm:main_surfaces}, i.e., to show that a symplectic diffeomorphism $\psi\in\Symp_0(\Sigma,\om)$ with finitely many fixed points that satisfies \eqref{ineq_floc} has periodic points of arbitrarily large period. What follows is a sketch of the argument.

By the classification in \cite{LeCalvez_conservative2022} we may suppose that there exists $k\geq1$ such that $\psi^k$ is isotopic to the identity relative its fixed point set. This implies, when added to the fact that $\pi_1(\Sigma)$ is torsion free, that we can find a symplectic path $\psi_t$ with $\psi_0=\id$, $\psi_1=\psi$ such that $\fix(\psi)=\fix_0(\psi_t)$. Furthermore, it follows that for an isolated fixed point $x\in\fix_0(\psi_t)$, we have $\dim\hf^{\loc}(\psi_t, x)=|L(\psi,x)|$, where $L(\psi,x)$ denotes the Lefschetz index of $x$; see e.g. \cite[(LF1)]{ginzburg2013closed}, \cite[Proposition~58]{le2021barcodes}. On the other hand, the Lefschetz-Hopf fixed point theorem implies that 
\begin{equation*}
	\sum_{x\in\fix_0(\psi_t)}L(\psi,x)=\chi(\Sigma)
\end{equation*}
since $\psi$ is homologically trivial. Thus, inequality \eqref{ineq_floc} is equivalent to 
\begin{equation*}
	\sum_{x\in\fix_{0}(\psi_t)}|L(\psi,x)|>\Bigg\vert\sum_{x\in\fix_0(\psi_t)}L(\psi,x)\,\Bigg\vert,
\end{equation*}
which in turn, is equivalent to the existence of $x\in\fix_0(\psi_t)$ with $L(\psi,x)\geq1$. Therefore, by \cite[Theorem~0.4]{le2006periodic}, $\psi$ has periodic points of arbitrarily large period.
\end{remark}

\begin{remark}
When $\Sigma$ has non-empty boundary and positive genus it is possible to impose a twist condition for $\psi_t$ on the boundary such that, after blowing down the boundary components, one would obtain a $\psi_t'$ on a closed surface of positive genus with at least one contractible fixed point with positive mean index and non-vanishing local Floer homology. It then follows from Theorem \ref{thm:mean index-zero} that $\psi_t$ has interior periodic points of arbitrarily large period. In the exact setting, this Poincaré-Birkhoff type phenomenon has been studied ever since Poincaré's work on the three-body problem. Moreno and van Koert show that Hamiltonian diffeomorphisms of Liouville domains satisfying a certain twist condition on the boundary have simple interior periodic points of arbitrarily large period \cite{moreno2022generalized}.
\end{remark}

\subsection{Surface symplectic diffeomorphisms with exactly one periodic point}
The Lefschetz fixed point theorem implies that a symplectic diffeomorphism $\psi$ of a closed orientable surface $\Sigma$ with non-zero Lefschetz number $L(\psi)$ must have at least one fixed point. Moreover, when $\psi$ is homotopic to the identity and $\Sigma$ has genus $g\geq 2$, the Lefschetz number $L(\psi)$ equals the Euler characteristic $\chi(\Sigma)=2-2g\not=0$ and, hence $\psi$ must have at least one fixed point.

When $g=1$, a symplectic diffeomorphism on a $2$-torus may have no periodic points as in an irrational translation. In fact, by \cite[Theorem~1.7]{GG09_genericexistence} a nondegenerate symplectic diffeomorphism isotopic to the identity defined on a $2$-torus has infinitely many periodic orbits, provided that it has one fixed point; see also Corollary \ref{cor:main_surfaces}. There are examples of degenerate symplectic diffeomorphisms (isotopic to the identity) defined on a $2$-torus with a prescribed number of periodic points; see Remark~\ref{rmk:torus_degenerate}.


In what follows in this section and in Section~\ref{sec:examples}, the surface $\Sigma$ is closed, orientable and has genus $g\geq 2$, unless otherwise explicitly stated.

In Section~\ref{sec:examples}, we present examples of symplectic flows on $\Sigma$ with a finite number $m$ of fixed points and no other periodic orbits. As seen, $m$ must be at least one. The following theorem shows that $m=1$ is possible and its proof is in Section~\ref{section:construction_1fx_pt}.

\begin{thm}\label{thm:1_fx_pt}
	For $g\geq 2$, there is a symplectic flow on $\Sigma$ with exactly one fixed point and no other periodic orbits.
\end{thm}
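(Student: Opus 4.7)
The plan is to realize the desired flow as the symplectic flow of a closed 1-form on $\Sigma$ with a single (highly degenerate) zero, borrowing a translation-surface structure to guarantee that non-singular orbits are not closed. Since $g\geq 2$, the stratum $\mathcal{H}(2g-2)$ of holomorphic differentials with a single zero of order $2g-2$ is non-empty: first I would fix a complex structure $J$ on $\Sigma$ and a holomorphic 1-form $\eta$ on $(\Sigma,J)$ whose divisor is a single point $p\in\Sigma$ of multiplicity $2g-2$. In a holomorphic coordinate $z$ around $p$ one has $\eta=(z^{2g-2}+O(z^{2g-1}))\,dz$, so $\alpha:=\mathrm{Re}(\eta)$ is a smooth closed 1-form on $\Sigma$ vanishing only at $p$, and the vector field $X$ defined by $\iota_X\omega=\alpha$ (with $\omega$ the given symplectic form) is smooth, vanishes only at $p$, and has Poincar\'e--Hopf index $2-2g=\chi(\Sigma)$ there. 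Its flow $\psi_t$ is complete and lies in $\Symp_0(\Sigma,\omega)$ with $p$ as its unique fixed point.

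To rule out non-trivial periodic orbits, I would next choose a minimal direction: by the Kerckhoff--Masur--Smillie theorem, for Lebesgue-almost every $\theta\in S^{1}$ the vertical flow of $e^{-i\theta}\eta$ on the translation surface $(\Sigma\setminus\{p\},e^{-i\theta}\eta)$ is uniquely ergodic and has no saddle connection. After rotating $\eta$ by such a $\theta$, I may assume this holds for $\eta$ itself, so that the vertical vector field $V$ characterized by $\eta(V)=i$ has only non-closed (dense or separatrix) orbits on $\Sigma\setminus\{p\}$. Both $X$ and $V$ lie in $\ker\alpha$; hence on $\Sigma\setminus\{p\}$ they are everywhere parallel, with a smooth positive proportionality factor that is bounded on every compact subset. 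A closed orbit of $X$ inside $\Sigma\setminus\{p\}$ would therefore be a compact set on which this factor is bounded, and reparametrizing would produce a closed orbit of $V$ of finite positive period, contradicting the minimality of $V$. Consequently $p$ is the only periodic point of $\psi_t$, which is the claim.

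The decisive step is the use of Kerckhoff--Masur--Smillie to secure a minimal direction; once that is in hand, the smoothness of $X$ across $p$ (a local cancellation provided by the factor $z^{2g-2}$ in $\eta$ against the non-degeneracy of $\omega$) and the parametrization comparison between $X$ and $V$ are essentially formal. A subtle point I would take care of is that the separatrices of $X$ approach $p$ only asymptotically in the $X$-parametrization, since $X$ vanishes at $p$; the finite-time approach of $V$ to $p$ becomes infinite-time for $X$, and so separatrices do not give closed orbits either.
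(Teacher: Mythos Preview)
Your argument is correct and takes a genuinely different route from the paper's. The paper proceeds by an explicit, hands-on construction: it takes $g$ tori carrying irrational linear flows, builds a connecting surface $U$ with $g$ cylindrical ends on which it defines a Hamiltonian with a single multi-saddle fixed point (locally $H^0(x,y)=\prod_{j=1}^{2g-1}(x-\tan(ja)y)$ with $a=\pi/(2g-1)$), and glues each torus to one of the ends; aperiodicity away from the fixed point is then read off directly from the irrationality of the torus flows and the way orbits traverse $U$. Your approach is much shorter and more conceptual: pick an abelian differential in the stratum $\mathcal{H}(2g-2)$, take its real part as the flux form, and invoke minimality of the straight-line flow in a generic direction. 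Two small remarks: Kerckhoff--Masur--Smillie is more than you need, since Keane's theorem (no saddle connections implies minimality) together with the countability of saddle-connection directions already suffices; and the phrase ``fix a complex structure $J$ and a holomorphic $1$-form $\eta$'' should be read as choosing the pair $(J,\eta)$ in the stratum, since a generic $J$ admits no such $\eta$. The sign of the proportionality factor between $X$ and $V$ is irrelevant to the orbit comparison. What the paper's explicit construction buys, beyond being self-contained, is a template that adapts immediately to the variants in Remark~\ref{remark:partition} (prescribing several fixed points with chosen negative indices), whereas your method would instead appeal to the non-emptiness of the corresponding strata $\mathcal{H}(a_1,\ldots,a_k)$ together with the same generic-direction argument.
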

\begin{remark}
\label{rmk:flux_of_constructions}
It follows from the construction method that the flux of the symplectic flows presented in Section~\ref{sec:examples} is irrational in the sense of \cite[(1-1)]{batoreo2018periodic}, in particular, it does not lie in $\h^1(\Sigma;\Q)$; see also the discussion in \cite[p.35]{batoreo2018periodic}.
\end{remark}

The time-one map $\psi$ of the symplectic flow $\psi_t$ in Theorem~\ref{thm:1_fx_pt} is a symplectic diffeomorphism isotopic to the identity with exactly one (degenerate) fixed point. More precisely, by the Lefschetz fixed-point formula
\begin{eqnarray}\label{eqn:Lefschetz_indices}
	L(\psi)=\displaystyle\sum_{x\in \text{Fix}(\psi)} L(\psi,x),	
\end{eqnarray}

and, hence, the Lefschetz index $L(\psi,x)$ of the unique periodic point $x$ of the symplectic diffeomorphism $\psi$ must be equal to $2-2g$. In fact, by the Poincaré--Hopf index theorem, the index of the unique periodic point of the symplectic flow $\psi_t$ must be equal to $2-2g$.

Equality \eqref{eqn:Lefschetz_indices} for $\psi_t$ can be achieved, on the one hand, with exactly one fixed point with index $2-2g$ as in Theorem~\ref{thm:1_fx_pt} and, on the other hand, but not exclusively, by $2g-2$ fixed points each with index $-1$. The following theorem was proved in \cite[Section~3A]{batoreo2018periodic}. A similar but different construction is presented in Section~\ref{section:partition}. 

\begin{thm}\label{thm:2g-2_fx_pts}
	For $g\geq 2$, there is a symplectic flow on $\Sigma$ with exactly $2g-2$ fixed points, each with index $-1$, and no other periodic orbits.
\end{thm}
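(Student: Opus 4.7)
The plan is to construct $\psi_t$ as the flow of a symplectic vector field $X$ obtained from a closed Morse 1-form $\alpha$ via $\iota_X \omega = \alpha$, where $\omega$ is the area form on $\Sigma$. Because $\alpha(X) = \omega(X,X) = 0$, the field $X$ is tangent to the singular foliation $\ker\alpha$: its zeros coincide with the zeros of $\alpha$, and its non-singular orbits lie in the leaves of this foliation. The task therefore reduces to producing $\alpha$ with (i) exactly $2g-2$ simple Morse saddle zeros, each of Poincaré--Hopf index $-1$, and (ii) no closed non-singular leaf of $\ker\alpha$.

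The partition construction, matching the ``similar but different'' approach indicated for Section~\ref{section:partition}, decomposes $\Sigma$ into $2g-2$ elementary blocks---for instance pairs of pants---glued along $3g-3$ essential simple closed curves $C_1,\ldots,C_{3g-3}$. On each block one places an explicit local model with a single interior Morse saddle, fulfilling (i), and with trajectories transverse to each boundary circle. The blocks are glued along the $C_i$ so that the area form and vector field patch smoothly. The periods $\int_{C_i}\alpha$ are then tuned so that the periods of $\alpha$ over a basis of $\h_1(\Sigma;\Z)$ are rationally independent---equivalently, the flux lies outside $\h^1(\Sigma;\Q)$, as in Remark~\ref{rmk:flux_of_constructions}. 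An equivalent complex-analytic realization is to take $\alpha = \mrm{Re}(\beta)$ for a generic holomorphic 1-form $\beta$ on a Riemann surface structure on $\Sigma$: by Riemann--Roch $\beta$ has $2g-2$ simple zeros, each giving a Morse saddle, and by classical results of Keane and Masur the horizontal foliation of a generic translation surface is minimal, hence has no closed leaves.

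The main obstacle is (ii), the absence of non-trivial periodic orbits. In homology, rational independence of the periods of $\alpha$ forces any closed orbit $\gamma$ of $X$ to be null-homologous, since a closed orbit automatically satisfies $\int_\gamma \alpha = 0$. A Poincaré--Hopf argument on the region such a $\gamma$ bounds then excludes it: the only interior zeros are saddles of index $-1$, whereas the index sum over a disk-type region must equal a positive Euler characteristic, a contradiction. In the translation-surface formulation, (ii) is immediate from minimality of the horizontal foliation of a generic abelian differential.
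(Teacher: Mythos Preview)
Your translation-surface route (B)---taking $\alpha=\mrm{Re}(\beta)$ for a generic holomorphic $1$-form $\beta$ and invoking Keane's criterion (no saddle connections $\Rightarrow$ minimal vertical foliation)---is correct and gives a clean proof. It is, however, a genuinely different construction from the paper's. The paper builds $\Sigma$ by gluing $g$ tori carrying irrational linear flows to an explicit ``connecting surface'' $U$ on which the flow is Hamiltonian for a small perturbation of the product Hamiltonian $H^0$, producing $2g-2$ simple saddles; the absence of closed orbits is then verified by hand, tracking that any orbit entering $U$ from torus $\mathbb{T}_i$ either limits on a saddle or exits back to the \emph{same} $\mathbb{T}_i$ at the symmetric height, so it continues along the irrational linear flow. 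What the paper's approach buys is explicitness (one sees the irrational flux directly and can modify $H^0$ to realise other partitions of $2g-2$, as in Remark~\ref{remark:partition}); what your approach buys is brevity, at the cost of importing the structure theory of translation surfaces.

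Your partition/Poincar\'e--Hopf route (A), on the other hand, has a real gap. You argue that a closed orbit $\gamma$ must be null-homologous and then ``bounds a disk-type region'' with positive Euler characteristic, which is incompatible with only index $-1$ zeros. But on a surface of genus $g\ge 2$, a null-homologous (i.e.\ separating) simple closed curve need not bound a disk: it can split $\Sigma$ into two pieces of genera $g_1,g_2\ge 1$ with $g_1+g_2=g$, whose Euler characteristics $1-2g_1$ and $1-2g_2$ are both negative and perfectly compatible with containing $2g_1-1$ and $2g_2-1$ saddles respectively. So rational independence of the periods of $\alpha$ by itself does \emph{not} preclude a separating closed leaf---what rules it out is the stronger genericity (absence of saddle connections) that you invoke only in approach (B). If you want (A) to stand on its own, you must either specify the local pair-of-pants model and analyse the glued dynamics directly (as the paper does for its torus model), or import minimality from (B).
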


\begin{remark}\label{remark:partition}
	In fact, we expect that, for any partition $(a_1,\ldots,a_k)$ of the integer $2g-2$, i.e. $\sum_{i=1}^k a_i = 2g-2$ with $a_i\geq 1$ and $a_i\leq a_j$ (when $i<j$), there is a symplectic flow $\psi_t$ with exactly $k$ fixed points $p_1,\ldots,p_k$ where $L(\psi_t,p_i)=-a_i,\; i=1,\ldots,k$, and no other periodic orbits. A discussion about this statement is in Section~\ref{section:partition}, where we present the constructions for the case $g=2$ or $g=3$. Theorem~\ref{thm:1_fx_pt} corresponds to the case $k=1$ and $a_1=2g-2$ and Theorem~\ref{thm:2g-2_fx_pts} corresponds to the case $k=2g-2$ and $a_i=1$, for all $i=1,\ldots, k$.
\end{remark}

\begin{remark}
	\label{rmk:LC-parasitic}
	Note that, as pointed out by Le Calvez in \cite[page~2]{LeCalvez_conservative2022}, one may construct, on a surface, ``a smooth symplectic map with an arbitrarily large number of fixed points and no other periodic point". In fact, one may refine the previous statement and construct a smooth symplectic map with any number $m\geq 1$ of fixed points and no other periodic points by taking the time-one map obtained from Theorem~\ref{thm:1_fx_pt} and adding $m-1$ fixed points using the procedure mentioned in \cite[page~2]{LeCalvez_conservative2022}; each \emph{parasitic} fixed point added using the referred method has Lefschetz index $0$. For symplectic maps with a prescribed (finite) number of fixed points with non-zero Lefschetz indices recall Remark~\ref{remark:partition} and see Section~\ref{section:partition}; observe that, in the the previous remark, $a_i\not=0$, for all $i=1,\ldots, k.$ 
	It is also possible to \emph{remove} a fixed point with index $0$ of a given symplectic map on a surface; see \cite[Introduction and Theorem~3.1]{slaminka93}. 	
\end{remark}

\begin{remark}\label{rmk:torus_degenerate}
	When $g=1$, consider an irrational linear flow on a $2$-torus. This flow has no periodic orbits. One may \emph{introduce} as many fixed points as desired using the procedure described in \cite[page~2]{LeCalvez_conservative2022}. One obtains a symplectic flow on the $2$-torus with a prescribed number of fixed points and no other periodic orbits. Each fixed point has Lefschetz index $0$ and the obtained flow is degenerate.	
\end{remark}

\subsection{Symplectic dynamics in higher dimensions}
The Hofer-Zehnder conjecture is a higher dimensional variant of Franks theorem. In short, it predicts that a Hamiltonian diffeomorphism of a closed symplectic manifold with more fixed points than required by the Arnold conjecture must have infinitely many periodic points.

The groundbreaking work of Shelukhin \cite{shelukhin2022hofer} formalized an interpretation of the Hofer-Zehnder conjecture which is closest to the original statement \cite[p. 263]{hofer2012symplectic}. Indeed, a homological version of the Arnold conjecture, which has now been settled, states that a nondegenerate Hamiltonian diffeomorphism of a closed symplectic manifold $(M,\om)$ possesses at least $\dim\h_{*}(M)$ fixed points; see \cite{arnold2014stability, arnold1965proprietes} for the conjecture and \cite{Floer1, Floer2, Floer3, hofer1995floer, liu1998floer, fukaya1999arnold, ruan1999virtual, piunikhin1996symplectic, abouzaid2021arnold, bai2022arnold, rezchikov2022integral} for the results in arbitrary dimensions. When the Hamiltonian diffeomorphism $\phi$ has isolated fixed points and is possibly degenerate the count of each fixed point $x$ must be weighted by the rank of the local Floer homology group $\hf^{\loc}(\phi,x)$. It follows from \cite{shelukhin2022hofer} that a Hamilonian diffeomorphism $\phi$, of a closed monotone symplectic manifold whose even part of the quantum homology is semisimple, such that
\begin{equation}
\label{eq:HZ-Ham}
	\displaystyle\sum_{x\in\fix_0(\phi)}\dim\hf^{\loc}(\phi, x)>\dim\h_{*}(M),
\end{equation}
has a simple $p$-periodic orbit for every sufficiently large prime $p$. Here, the sum is over the contractible fixed points of $\phi$ as in Definition \ref{def:contractible}. See \cite{atallah2023hz} and \cite{bai2024hz} for generalizations to semipositive and toric symplectic manifolds, respectively. 

From this viewpoint, Theorem \ref{thm:main_surfaces} can be seen as evidence of a Hofer-Zehnder type phenomenon for symplectic diffeormorphisms that are isotopic to the identity and perhaps not Hamiltonian. Lê and Ono \cite{van1995symplectic, ono05FN, van2020floer}
showed that if $\psi_t$ is a path of symplectic diffeomorphisms based at the identity with a nondegenerate endpoint $\psi=\psi_1$, then the number of contractible fixed points is bounded from below by $\dim_{\Lambda_\theta} \hn(\theta)$. Here, $\hn(\theta)$ denotes the Morse--Novikov homology with respect to a closed $1$-form $\theta$ representing the class of the flux of $\psi_t$ in $\h^1(M;\R)$; see Section \ref{secmorsenovikov} for the definition of Morse--Novikov homology and Section \ref{sec:sympper} for that of flux. Note that for surfaces $\dim_{\Lambda_\theta} \hn(\theta)=2g-2$ whenever $\theta$ is not exact; see Corollary~\ref{HNranksurfaces}. Therefore, Inequality~\eqref{ineq_floc} translates to the following generalization of Inequality~\eqref{eq:HZ-Ham} in higher dimensions
\begin{equation*}
	\displaystyle\sum_{x\in\fix_0(\psi_t)}\dim\hf^{\loc}(\psi_t, x)>\dim_{\Lambda_\theta}\hn(\theta).
\end{equation*}
It would be interesting to know, and is subject of further investigation, whether such an inequality prompts the existence of infinitely many periodic points in higher dimensions. We note that the usual approach of using semisimplicity to obtain this type of result in higher dimensions only applies to Hamiltonian diffeomorphisms. Indeed, the recent work of Pieloch implies that closed symplectic manifolds with semisimple even quantum cohomology have finite fundamental groups \cite{pieloch2022fundamental}.

The following definition, which is a non-Hamiltonian variant of a pseudrotation, would capture the elements in $\Symp_0(M,\om)$ with finitely many periodic points.

\begin{definition}
\label{def:pseudotranslation}
We call a symplectic diffeomorphism $\psi$ a \emph{pseudotranslation} if there exists a symplectic path $\psi_t$ with $\psi_0=\id$, $\psi_1=\psi$, and flux $[\theta]$ for which there exists a sequence of positive integers $\{k_j\}_{j>0}$ diverging to infinity such that $\fix_0(\psi^{k_j}_t)$ is finite and independent of $j$, $\psi^{k_j}$ is an admissible iteration\footnote{An iterate $\psi^k$ is called admissible if for every fixed point $x$ of $\psi$ if $\la^k=1$ implies that $\la=1$ for every eigenvalue of $d\psi_x$; see~\cite[Section~4.2]{ginzburg2009action}. } of $\psi$, and
	\begin{equation*}
	\displaystyle\sum\dim\hf^{\loc}(\psi^{k_j}_t, x^{k_j})=\dim_{\Lambda_\theta}\hn(\theta)\quad\forall j>0.
	\end{equation*}
The sum is over all $x\in\fix_0(\psi_t)$. Note that  $\dim_{\Lambda_\theta}\hn(\theta)=\dim_{\Lambda_{r\theta}}\hn(r\theta)$ holds for all $r\in\R\setminus\{0\}$.
\end{definition}

\begin{remark}
For surfaces, Theorem \ref{thm:main_surfaces} implies that every $\psi\in\Symp_0(\Sigma)$ with finitely many periodic points is a pseudotranslation. In particular, irrational translations of the $2$-torus, and the constructions in Theorem~\ref{thm:1_fx_pt} and Theorem~\ref{thm:2g-2_fx_pts}, and, in fact, all of those contained in Section~\ref{sec:examples}, are examples of pseudotranslations. When $[\theta]=0$ the definition of a pseudotranslation is closely related to that of a pseudorotation as in \cite{atallah2020hamiltonian}. In higher dimensions, symplectic circle actions with isolated fixed points give rise to examples of pseudotranslations; see \cite{tolman2017non,jang2023non}. Indeed, a circle action induces a $\Z_p$ action for an arbitrarily large prime $p$. Since the fixed points are isolated, the generator of such an action for a sufficiently large $p$ only has nondegenerate contractible fixed points, and the cardinality of the fixed point set is equal to the Euler characteristic of the symplectic manifold and hence, by Proposition \ref{eulernumber}, equal to the Morse-Novikov Betti number of associated to its flux.
\end{remark}

The following proposition, which is proved in Section \ref{sec:localFH}, shows that the mean index of any Floer theoretically visible fixed point of a pseudotranslation must be zero.

\begin{prop}
\label{prop:pseudotranslation}
Let $\psi$ be a pseudotranslation of a closed rational symplectically Calabi--Yau symplectic manifold $(M,\om)$ and let $\psi_t$ be as in Definition~\ref{def:pseudotranslation}. Then, $\Delta(\psi_t,x)=0$ for all $x\in\fix_0(\psi_t)$ with non-trivial local Floer homology.  
\end{prop}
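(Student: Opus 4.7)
The plan is to adapt the standard argument from the Hamiltonian pseudorotation setting (cf.~\cite{atallah2020hamiltonian}) to the flux-twisted Floer/Morse--Novikov framework. The symplectically Calabi--Yau hypothesis makes the relevant Floer complexes canonically $\Z$-graded. The argument hinges on two structural facts about local Floer homology: (a) for an admissible iterate $\psi^k$ one has $\dim\hf^{\loc}(\psi^k_t, x^k)=\dim\hf^{\loc}(\psi_t, x)$, so $x^{k_j}$ has non-trivial local Floer homology whenever $x$ does; (b) in the $\Z$-graded setting, $\hf^{\loc}(\psi^{k}_t, x^{k})$ is supported in degrees in the range $[\Delta(\psi^{k}_t, x^{k})-n,\ \Delta(\psi^{k}_t, x^{k})+n]$ where $2n=\dim M$, whereas $\hn(\theta)$ is supported in a fixed bounded range, say $[0,2n]$, independent of $j$.

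Next, I would combine the rank equality in Definition~\ref{def:pseudotranslation} with the identification (generalizing L\^e--Ono) between the Floer homology of the symplectic path $\psi^{k_j}_t$ with Novikov coefficients $\Lambda_\theta$ and the Morse--Novikov homology $\hn(\theta)$. Since the total dimension of the Floer chain complex, which equals $\sum \dim\hf^{\loc}(\psi^{k_j}_t, x^{k_j})$, matches the total dimension $\dim_{\Lambda_\theta}\hn(\theta)$ of its homology, the Floer differential must vanish identically, and hence every non-trivial local Floer homology group survives as a graded subspace of the global Floer homology in matching degrees.

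The proof then concludes by contradiction. Suppose some $x\in\fix_0(\psi_t)$ has $\hf^{\loc}(\psi_t,x)\neq 0$ and $\Delta(\psi_t,x)\neq 0$. By mean-index homogeneity, $\Delta(\psi^{k_j}_t, x^{k_j})=k_j\,\Delta(\psi_t,x)$, which diverges to $\pm\infty$. For $k_j$ sufficiently large, the support interval in (b) lies entirely outside $[0,2n]$. By (a) the local group $\hf^{\loc}(\psi^{k_j}_t, x^{k_j})$ is nonzero, and by the previous paragraph it embeds into $\hn(\theta)$ in degrees outside the support of the latter, a contradiction. Therefore $\Delta(\psi_t,x)=0$.

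The main obstacle I anticipate is the rank-collapse/identification step: in the flux-twisted setting one must match the Novikov grading of $\hn(\theta)$ with the Conley--Zehnder-style grading used to define the mean index, and justify that the equality in Definition~\ref{def:pseudotranslation} really forces vanishing of the Floer differential uniformly across the sequence $\{k_j\}$ of admissible iterates. For Hamiltonian pseudorotations this is a standard observation, but extending it while controlling the Novikov-module grading structure requires careful bookkeeping.
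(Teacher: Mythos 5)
Your proposal follows essentially the same strategy as the paper: force the Floer differential to vanish from the rank equality in Definition~\ref{def:pseudotranslation}, use the symplectically Calabi--Yau hypothesis to bound the support of $\hfn(\psi_t^{k_j})$ in the interval $(-n,n)$, note that $\Delta(\psi_t^{k_j},x^{k_j})=k_j\Delta(\psi_t,x)$ diverges, and use \eqref{supphfloc} to get a contradiction. So the skeleton is identical.

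There is, however, one imprecise step, and it is exactly the one you yourself flagged as an anticipated obstacle. You write that ``the total dimension of the Floer chain complex \dots equals $\sum\dim\hf^{\loc}(\psi_t^{k_j},x^{k_j})$.'' This is not literally true for degenerate $\psi^{k_j}$: the Floer complex of a small nondegenerate perturbation typically has strictly more generators than $\sum\dim\hf^{\loc}$, since each local Floer homology group is the homology of a local chain complex which may have internal cancellations. So the argument ``chain dimension $=$ homology dimension, hence differential vanishes'' does not apply directly to the standard Floer chain complex. What is true, and what the paper invokes, is that the \emph{canonical complex} of Shelukhin \cite{shelukhin2022hofer}---built via Usher's singular value decomposition \cite[Proposition~2.20]{usher2016persistent} and adapted to the Floer--Novikov setting in \cite{atallah2023hz}---has underlying graded vector space $\bigoplus_x\hf^{\loc}(\psi_t^{k_j},x^{k_j})$ and computes $\hfn(\psi_t^{k_j})$. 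For \emph{that} complex the rank equality in Definition~\ref{def:pseudotranslation} does force the differential to vanish, and then your support/mean-index argument runs through without change. So the gap is not in the underlying idea but in naming the correct complex; once that is repaired your argument is exactly the paper's proof.
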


We note that the assertion of Proposition \ref{prop:pseudotranslation} does not hold for general closed symplectic manifolds as evidenced by an irrational rotation of the sphere. Furthermore, in the symplectically Calabi--Yau setting this is a purely non-Hamiltonian phenomenon since pseudorotations do not exist; see \cite{ginzburg2019conley, atallah2020hamiltonian}.

\subsection{Proof overview}
\label{sec:proof}
\subsubsection{An outline of the proof of Theorem~\ref{thm:main_surfaces}}
The symplectic proof of our main result has three key ingredients. The first is a generalization of the arguments contained in \cite[Section~4A]{batoreo2018periodic} to the degenerate setting and for symplectic diffeomorphisms of arbitrary flux.

\begin{thm}\label{thm:mean index-zero}
Let $\psi$ be a symplectic diffeomorphism of a surface of positive genus with isolated fixed points contractible with respect to $\psi_t$. If $x\in\fix_0(\psi_t)$ has non-trivial local Floer homology and $\Delta(\psi_t,x)$ is non-zero, then there exists a simple $p$-periodic point for each sufficiently large prime $p$, which is contractible with respect to $\psi_t^p$.
\end{thm}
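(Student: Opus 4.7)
The plan is to argue by contradiction, generalizing the Floer--Novikov strategy of \cite[Section~4A]{batoreo2018periodic} from the nondegenerate to the degenerate setting through local Floer homology and the growth of the mean index under iteration. Assume, toward a contradiction, that for arbitrarily large primes $p$ the map $\psi$ admits no simple $p$-periodic point contractible with respect to $\psi_t^p$; since $p$ is prime, $\fix_0(\psi_t^p)$ then consists solely of iterates $y^p$ of elements of $\fix_0(\psi_t)$. Restricting to admissible primes (which contains all sufficiently large primes) gives the Ginzburg--Gürel iteration-invariance of local Floer homology: $\dim\hf^{\loc}(\psi_t^p,y^p)=\dim\hf^{\loc}(\psi_t,y)$, with this group supported in an $O(1)$ window of Conley--Zehnder degree around the mean index $\Delta(\psi_t^p,y^p)=p\cdot\Delta(\psi_t,y)$, since $\dim\Sigma=2$.

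The main Floer-theoretic input is the $\Z$-graded Floer--Novikov homology $\hfn(\psi_t^p)$, well defined because $\Sigma$ is symplectically Calabi--Yau. By Lê--Ono \cite{van1995symplectic, van2020floer} its total rank equals $\dim\hn(p[\theta])=\dim\hn([\theta])$, a constant independent of $p$, and it is supported in a bounded range of degrees. Since $\Delta(\psi_t,x)\neq 0$ and $\dim\hf^{\loc}(\psi_t,x)>0$, the iterate $x^p$ contributes at degrees of order $p\cdot\Delta(\psi_t,x)$, which diverges. For $p$ large enough its degree window is disjoint from the support of $\hfn(\psi_t^p)$, so every class in $\hf^{\loc}(\psi_t^p,x^p)$ must be killed by a Floer--Novikov differential. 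Under the contradiction hypothesis, the only possible cancellation partners are iterates $y^p$ with $y\in\fix_0(\psi_t)$, whose degrees scale as $p\cdot\Delta(\psi_t,y)$; for large $p$ only partners with $\Delta(\psi_t,y)=\Delta(\psi_t,x)$ remain close enough in degree.

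To extract a contradiction, I would focus on the cluster $\mathcal{Y}=\{y\in\fix_0(\psi_t):\Delta(\psi_t,y)=\Delta(\psi_t,x)\}$ and its iterated contribution $\bigoplus_{y\in\mathcal{Y}}\hf^{\loc}(\psi_t^p,y^p)$. This graded complex has fixed shape (after the global shift $p\cdot\Delta(\psi_t,x)$) independent of $p$ for large admissible primes, and any surviving homology must inject into $\hfn(\psi_t^p)$ at the divergent degree, contradicting its bounded degree support. A rank and action-filtration analysis of the cluster, using the surface identity $\dim\hf^{\loc}=|L|$ together with the Lefschetz--Hopf formula $\sum_y L(\psi,y)=\chi(\Sigma)$ as in \cite{batoreo2018periodic}, then shows that the cluster homology cannot entirely vanish. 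This forces the existence of a simple $p$-periodic point contractible with respect to $\psi_t^p$ for every sufficiently large prime $p$.

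The main obstacle is the final cluster-cancellation step: ruling out complete cancellation of the iterated local Floer groups whose mean index matches $\Delta(\psi_t,x)$. I expect this to follow from the rigidity of surface local Floer homology in dimension two together with the persistence and barcode framework developed by Le Calvez \cite{le2021barcodes} for surface symplectic diffeomorphisms, which constrains the Floer--Novikov differentials in terms of isotopy-invariant topological data and prevents complete cancellation when a generator has the distinguished properties of $x$.
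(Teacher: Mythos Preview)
Your setup through the second paragraph is correct and mirrors the paper's strategy: iterate, use admissibility to preserve local Floer ranks, and observe that the support of $\hf^{\loc}(\psi_t^p,x^p)$ drifts to degree $\approx p\Delta(\psi_t,x)$, outside the interval $[-1,1]$ where $\hfn(\psi_t^p)$ lives. You also correctly isolate the real obstacle: the cluster $\mathcal{Y}$ of fixed points with the same mean index as $x$.

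The gap is exactly where you flag it, and your proposed resolution does not work. The identity $\dim\hf^{\loc}=|L|$ is not available in general; it requires that some iterate be isotopic to the identity relative the fixed-point set, which you have no reason to assume here. More importantly, even granting it, neither this identity nor the Lefschetz--Hopf formula $\sum_y L(\psi,y)=\chi(\Sigma)$ says anything about the cluster: the sum runs over all fixed points, not over $\mathcal{Y}$, and knowing the ranks of the individual local groups gives no control over whether the global Floer--Novikov differential cancels them against one another. There is no a priori obstruction to complete cancellation inside $\mathcal{Y}$, and the appeal to Le~Calvez's barcode framework is too vague to supply one.

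The paper does something quite different: rather than prove the cluster homology survives, it \emph{removes the cluster}. One blows up $\Sigma$ at every $y\in\fix_0(\psi_t)$ with $|\Delta(\psi_t,y)|=|\Delta(\psi_t,x)|$ and $y\neq x$, does the same on a reflected copy, and glues the two bounded surfaces along their boundary circles to produce a closed surface $\Sigma'$ of higher genus carrying a $C^1$ symplectic isotopy. Because $\Delta(\psi_t,x)$ is a nonzero even integer in the degenerate case, the blown-up points contribute only non-contractible orbits on $\Sigma'$; the contractible fixed points of the new isotopy are exactly two copies of $x$ and two copies of each point in $\mathrm{S}$ (those with a different mean-index modulus). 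Now $x_\pm$ are isolated in mean-index space from everything else, so after a small nondegenerate perturbation their local classes cannot be hit by any Floer trajectory and survive to $\hfn$ in a degree outside $[-1,1]$, giving the contradiction. The blow-up--and--glue manoeuvre is the missing idea in your argument.
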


The proof of Theorem \ref{thm:mean index-zero} is contained in Section \ref{sec:proof-mean index-zero}. In short, we first observe that the support of the Floer--Novikov homology group $\hfn(\psi_t)$ is contained in the closed interval $I=[-1,1]$; in fact, if $\psi_t$ has non-trivial flux, then the homology is concentrated in degree zero for surfaces of positive genus. This follows from the more general fact that, for closed symplectic manifolds, the rank in each degree of the version of Floer--Novikov homology developed in \cite{van1995symplectic, ono05FN} is constant along certain deformations of the flux; see Section \ref{hfnsection} for details. We reach a contradiction under the assumption that $p$ is a sufficiently large prime and there are no simple $p$-periodic orbits contractible with respect to $\psi_t^p$. The idea is to use the fact $\Delta(\psi_t,x)$ is non-zero by assumption and grows linearly with iterations to guarantee that the support of the local Floer homology associated to a sufficiently large iterate of $x$ is disjoint from $I$. To deal with the case $x$ is degenerate we use the techniques of blowing up and gluing, described in Section \ref{sec:blow-up_and_gluing}, to produce a new surface where a copy of $x^p$ is the only periodic point with mean index $p\Delta(\psi_t,x)$ and must be non-trivial (after perturbing) in homology. 

The next step is to show there exists a dichotomy for $\psi\in\Symp_0(\Sigma,\om)$ with finitely many contractible fixed points with respect to a path $\psi_t$. That is, either there exists a fixed point $x\in\fix_0(\psi_t)$ satisfying the hypothesis of Theorem \ref{thm:mean index-zero} or there exists a \emph{symplectically degenerate extremum}; see Definition \ref{defn:SDM}.  

\begin{thm}
\label{thm:dichotomy}
Let $\psi$ be a symplectic diffeomorphism of a surface of positive genus. Suppose $\psi$ has finitely many contractible fixed points with respect to a path $\psi_t$ of symplectic diffeomorphisms that has non-trivial flux and satisfies \eqref{ineq_floc}. Then, either there exists $x\in\fix_0(\psi_t)$ with non-trivial local Floer homology and $\Delta(\psi_t,x)\neq0$ or there exists a symplectically degenerate extremum.
\end{thm}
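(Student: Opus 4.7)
The plan is to prove the contrapositive. Suppose that every $x \in \fix_0(\psi_t)$ with non-trivial local Floer homology satisfies $\Delta(\psi_t, x) = 0$ and is not a symplectically degenerate extremum; the goal is to derive an equality incompatible with \eqref{ineq_floc}.

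Since $\dim \Sigma = 2$, i.e.\ $n = 1$, the local Floer homology $\hf^{\loc}(\psi_t, x)$ of any isolated contractible fixed point is supported in the degree window $[\Delta(\psi_t, x) - 1, \Delta(\psi_t, x) + 1]$. Under the running hypothesis this window collapses to $\{-1, 0, 1\}$. By the definition of a symplectically degenerate extremum (Definition~\ref{defn:SDM}), a fixed point with $\Delta(\psi_t, x) = 0$ and a non-zero generator in an extremal degree $\pm n = \pm 1$ would be an SDE; ruling out SDEs therefore forces each $\hf^{\loc}(\psi_t, x)$ to be concentrated in degree $0$.

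Next, one invokes the action filtration on the Floer--Novikov chain complex $\cfn_*(\psi_t)$: it gives rise to a spectral sequence whose $E_1$-page is $\bigoplus_{x \in \fix_0(\psi_t)} \hf^{\loc}(\psi_t, x)$ and which converges to $\hfn(\psi_t)$. Every differential $d_r$ strictly shifts the Floer grading; since $E_1$ is concentrated in degree $0$ by the preceding step, all higher differentials vanish and the sequence collapses on the $E_1$-page, giving
$$\dim_{\Lambda_\theta} \hfn(\psi_t) \;=\; \sum_{x \in \fix_0(\psi_t)} \dim \hf^{\loc}(\psi_t, x).$$
On the other hand, the non-triviality of the flux class $[\theta]$ together with Corollary~\ref{HNranksurfaces} (which says the Morse--Novikov Betti number of a non-exact class on $\Sigma$ equals $2g-2$, and is concentrated in degree $0$) and the identification of $\hfn$ with $\hn(\theta)$ from Section~\ref{hfnsection} yields $\dim_{\Lambda_\theta} \hfn(\psi_t) = 2g-2$. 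Substituting this into the displayed equality and using \eqref{ineq_floc} produces the contradiction $2g - 2 > 2g - 2$.

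The main technical point is the existence of a local-to-global spectral sequence in the Floer--Novikov setting, with $E_1$ genuinely identified with the direct sum of the local Floer homologies of the contractible fixed points, and the differentials $d_r$ preserving the degree shift needed to force collapse when $E_1$ sits in a single degree. This is standard in the Hamiltonian case, but requires some care here because the action functional is multi-valued; the key point is that over a lift to the appropriate Novikov covering the filtration by action makes sense, and the associated graded pieces compute the local Floer homologies in the usual grading.
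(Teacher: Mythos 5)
Your proof takes the same route as the paper: pass to the contrapositive, observe that ruling out fixed points with $\Delta\neq 0$ and ruling out symplectically degenerate extrema forces every non-trivial $\hf^{\loc}(\psi_t,x)$ to be concentrated in degree $0$ (since $n=1$ and $\supp\hf^{\loc}\subset[\Delta-1,\Delta+1]$), and then play this off against the fact that $\hfn(\psi_t)$ is concentrated in degree $0$ with $\Lambda_\theta$-rank $2g-2$. The genuinely different aspect of your write-up is the packaging: you invoke a local-to-global spectral sequence of the action filtration and argue degeneration at $E_1$ from the degree constraint, whereas the paper deliberately avoids asserting such a spectral sequence exists in the Floer--Novikov setting and instead proves the degeneration by hand. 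Propositions~\ref{prop:SDM1} and~\ref{prop:SDM2} are precisely the content of your degeneration claim: they construct, by a finite iterated correction, a closed lift of each local degree-$0$ cycle and show the resulting classes are $\Q$-linearly independent in $\hfn(\psi_t')$. What your spectral-sequence phrasing buys is conceptual clarity; what it costs is that the existence and convergence of the filtration spectral sequence with the claimed $E_1$-page is exactly the technical content you defer, and in the Novikov setting this is not automatic because the action filtration is unbounded in both directions and indexed by a real-valued, infinite spectrum. You flag this as ``the main technical point'' but do not discharge it; in the paper it \emph{is} the proof. A reader who accepts your spectral-sequence claim as a black box will find your argument complete; a careful referee will ask for the content of Propositions~\ref{prop:SDM1} and~\ref{prop:SDM2}.

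Two small corrections. First, Corollary~\ref{HNranksurfaces} says that $\hn_*(\theta)$ for a non-exact class on $\Sigma$ is concentrated in degree $1$, not degree $0$; the shift $\hfn_k\cong\hn_{k+1}\otimes_{\Lambda_\theta}\Lambda_{\om,\theta}$ is what produces a degree-$0$ concentration for $\hfn$. Second, to turn ``$E_1$ collapses'' into ``$\dim_{\Lambda_\theta}\hfn=\sum_x\dim_\Q\hf^{\loc}$'' you implicitly need $E_1$ to be interpreted as a $\Lambda_{\om,\theta}$-module, $\bigoplus_x\hf^{\loc}(\psi_t,x)\otimes_\Q\Lambda_{\om,\theta}$, so that rank over the Novikov field matches the $\Q$-count of local generators; this is the correct interpretation, but worth making explicit since your ``$E_1=\bigoplus_x\hf^{\loc}(\psi_t,x)$'' reads as a $\Q$-vector space, whose $\Q$-dimension does not equal $\dim_{\Lambda_\theta}\hfn$.
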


The proof of Theorem \ref{thm:dichotomy} is delicate and can be found in Section~\ref{sec:proof-dichotomy}. Roughly, we first observe that Inequality \eqref{ineq_floc} implies that the total Floer--Novikov differential is non-trivial. Then, assuming that no contractible fixed point satisfies the hypothesis of Theorem~\ref{thm:mean index-zero},  we show that a symplectically degenerate extremum must exist. Otherwise, one obtains a non-trivial class of degree $\pm1$ in $\hfn(\psi_t)$, which is impossible considering that it must be supported in degree zero whenever $\psi_t$ has flux.

From the discussion in Section \ref{subsection:SDM} it follows that the existence of a symplectically degenerate extremum implies that $\psi$ has a simple $p$-periodic orbit for each sufficiently large prime. The arguments in that section closely follow the theory developed in \cite{ginzburg2010conley, ginzburg2009action}; we simply adapt them to the non-Hamiltonian setting. With this in mind, a combination of Theorem \ref{thm:mean index-zero} and Theorem \ref{thm:dichotomy} proves the first assertion of Theorem \ref{thm:main_surfaces}. While the conditions of Theorem \ref{thm:dichotomy} supposes non-trivial flux, Theorem~\ref{thm:main_surfaces} is already a consequence of Ginzburg's original proof of the Conley conjecture in the Hamiltonian case \cite{ginzburg2010conley}. 

The second assertion, which concerns the mean indices, follows from Theorem \ref{thm:mean index-zero} whenever the fixed point has non-trivial local Floer homology. It remains to show that $\Delta(\psi_t,x)$ must be zero even when $x$ has trivial local Floer homology. We argue by contradiction. Note that since $x$ is contractible we may compose $\psi_t$ with a contractible loop of Hamiltonian diffeomorphisms (which does not change the mean index nor the rank of local Floer homology) to obtain a path $\psi_t'$ which fixes $x$ for all $t$. At this point, we observe that the sum of ranks of the local Floer homology groups of every $x\in\fix_0(\psi_t')$ is $2g-2$ by the first part of the Theorem \ref{thm:main_surfaces}. We then use the blow-up and gluing technique detailed in Section \ref{sec:blow-up_and_gluing} and applied in the proof of Theorem~\ref{thm:main_surfaces} to obtain a symplectic path $\phi_t$ on a genus $2g$ surface whose contractible fixed points are exactly two copies of all the contractible fixed points of $\psi_t'$ except for $x$. This implies the sum of the ranks of the local Floer homology groups of every contractible fixed point of $\phi_t$ is exactly $2(2g-2)=4g-4$. This contradicts the fact the rank of the Floer--Novikov homology associated to $\phi_t$ is $2(2g)-2=4g-2$.

We observe that in view of Remark \ref{rmk:LeCalvez} the argument contained in the previous paragraph does not require the use of Floer theory. 

\subsubsection{An outline of the construction of symplectic flows with a prescribed number of fixed points}
The construction of a symplectic flow $\psi_t$ on $\Sigma$ with exactly one fixed point has three steps: in step~$1$, we consider $g$ tori with irrational linear flows, in step~$2$, we construct a surface with $g$ \emph{cylindrical-ends} and define on it a Hamiltonian flow with exactly one fixed point and, in step~$3$, we glue each torus to  one of the \emph{ends} of the constructed surface in order to obtain $\Sigma$ with the desired symplectic flow $\psi_t$. A draft of the construction of $\Sigma$ is illustrated in Figure~\ref{figure:general}.

The constructions in Theorem \ref{thm:2g-2_fx_pts} and Remark \ref{remark:partition} can be similarly obtained by replacing the Hamiltonian flow defined in step 2 above.

\subsection*{Acknowledgements}
The authors are grateful to Viktor Ginzburg, Başak G\"{u}rel, and Egor Shelukhin for motivating the project and for the inspiring discussions. In particular, we thank Egor Shelukhin for suggesting that Remark \ref{rmk:LeCalvez} and Theorem \ref{thm:dichotomy} were true, and that an adaptation of Hamiltonian SDM theory to our setting was possible. M.S.A was partially supported with funding from the Fondation Courtois, the ISM, the FRQNT, and the Fondation J. Armand Bombardier. M.B. benefitted from a visit to IMPA hosted by Leonardo Macarini and is thankful for the hospitality and the valuable discussions. M.B. was partially supported by CAPES/PRAPG grant 88881.964878/2024-01. 
\section{Preliminaries}

In this section, we establish notations and conventions, and review some facts by providing a brief overview of Floer--Novikov homology for symplectic diffeomorphisms. Most of the material in this section can be found in \cite{van1995symplectic,ginzburg2009action,van2020floer}.

Let $(M^{2n},\om)$ be a closed symplectic manifold, $\psi\in\Symp_0(M,\om)$ and denote by $\fix(\psi)$ the set of fixed points of $\psi$. We say that $x\in M$ is a \textit{periodic point} of $\psi$ if there exists a positive integer $k$ such that $\psi^k(x)=x$, and denote by $\per(\psi)$ the set of periodic points of $\psi$.  A $k$-periodic point $x$ is called \emph{simple} if $k$ is the smallest positive integer such that $\psi^{k}(x)=x$.

\begin{definition}
	\label{def:contractible}
	A fixed point $x$ of a symplectic diffeomorphism $\psi\in\Symp_0(M,\om)$ is said to be contractible with respect to a path $\psi_t$ of symplectic diffeomorphisms based at the identity with $\psi_1=\psi$, when the loop $\{\psi_t(x)\}$ is contractible in $M$.
\end{definition}


\subsection{Morse--Novikov homology}\label{secmorsenovikov}
Let $N$ be a closed smooth manifold and $\theta \in \Omega^1(N)$ a closed $1$-form. Consider $\pi \colon \overbar{N}^\theta \to N$ the minimal covering space of $N$ where $\pi^*\theta$ is an exact $1$-form. Namely, it is the covering associated with the kernel of the period homomorphism
\begin{align*}
	I_\theta\colon \pi_1(N) \to \R, \quad [x] \mapsto \langle [\theta],[x] \rangle = \int_x \theta.
\end{align*}
The covering transformation group is isomorphic to 
$$\Gamma_\theta := \frac{\pi_1(N)}{\ker I_\theta}.$$
We fix $\Q$ as the ground field. Let $\Lambda_\theta$ be the Novikov ring given by the completion of the group ring of $\Gamma_\theta$ with respect to $I_\theta$, that is
\begin{align*}
	\Lambda_{\theta} = \Big\{\sum_i a_i g_i \ \vert \ a_i \in \Q, g_i \in \Gamma_{\theta}, \  \text{satisfying the condition below}\Big\}:
\end{align*}
\begin{itemize}
	\item the set $\{i \ \vert \ a_i \neq 0, I_\theta(g_i)>c\}$ is finite for all $c \in \R$.
\end{itemize}
Suppose that $\pi^*\theta$ is a Morse $1$-form, i.e., there exists a Morse function $f_\theta \colon \overline{N}^\theta \to \R$ such that $df_\theta = \pi^*\theta$. Let $f_\theta$ be such a primitive. We define the Morse--Novikov chain complex by
$$\cn_k(\theta) = \Big\{\sum_i a_i p_i \ \vert \ a_i \in \Q, \,p_i \in \mathrm{Crit}(f_\theta) \ \text{satisfying the conditions below}\Big\}:$$
\begin{itemize}
	\item the set $\{i \ \vert \ a_i\neq 0, f_\theta(p_i)>c\}$ is finite for any $c \in \R$;
	\item $\mathrm{index}_{f_\theta}(p_i) = k$.
\end{itemize}
This $\Q$-vector space is a finitely generated free module over $\Lambda_\theta$ and for a generic choice of Riemannian metric $g$ on $N$, it is endowed with a boundary operator as follows. For $p \in \mathrm{Crit}(f_\theta)$, we define
$$\partial(p) = \sum_{q} \langle p,q \rangle_{f_\theta} q,$$
where the sum runs over all $q \in \mathrm{Crit}(f_\theta)$ such that $\mathrm{index}_{f_\theta}(p)-\mathrm{index}_{f_\theta}(q) = 1$ and $\langle p,q \rangle_{f_\theta}$ denotes the count (with signs) of the rigid bounded antigradient flow lines of $f_\theta$, with respect to the metric $\pi^*g$, connecting $p$ and $q$. Here, $\mathrm{index}_{f_\theta}(p)$ denotes the Morse index of $p$. When $g$ is generic, this boundary operator is well defined and extends linearly to a map
$$\partial \colon \cn_k(\theta) \to \cn_{k-1}(\theta)$$
that satisfies $\partial^2 = \partial \circ \partial = 0$. In this case, the Morse--Novikov homology of $\theta$ is defined as usual by
$$\hn_k(\theta) = \frac{\ker (\partial \colon \cn_k(\theta) \to \cn_{k-1}(\theta))}{\mathrm{im}(\partial \colon \cn_{k+1}(\theta) \to \cn_{k}(\theta))}.$$
The resulting homology is independent of the generic metric $g$ and only depends on the cohomology class of $\theta$. Namely, if $[\theta_1]=[\theta_2] \in \h^1(N;\R)$, then $\hn_k(\theta_1) \cong \hn_k(\theta_2)$ as modules over $\Lambda_{\theta_1}=\Lambda_{\theta_2}$, for every $k$.

\begin{remark}
	When $\theta$ is exact, say $\theta=df$, where $f\colon N \to \R$ is a Morse function, we have $\overline{N}^\theta = N$, $\Lambda_\theta \cong \Q$, and the Morse--Novikov chain complex $\cn_k(\theta)$ coincides with the Morse chain complex $\cm_k(f)$. In particular, $\hn_k(\theta)=\hm_k(f)=\h_k(N)$, where $\h_k(N)$ denotes the $k$-th singular homology group with coefficients in $\Q$.
\end{remark}

We have the following difference between Morse--Novikov homology and Morse homology.
\begin{prop}[{\cite[Proposition 4.12]{ono2006floer}}]\label{vanishHN}
	Let $\theta$ be a closed but not exact $1$-form defined on a closed smooth manifold $N$. Then
	$$\dim_{\Lambda_\theta} \hn_k(\theta) = 0$$
	for $k=0$ and $k=\dim N$.
\end{prop}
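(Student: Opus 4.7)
The plan is to reduce both degrees to the case $k=0$ via a Poincaré--Novikov duality, and then to establish $\hn_0(\theta)=0$ by a telescoping construction inside the Novikov ring.

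Since $\theta$ is not exact, the period homomorphism $I_\theta\colon\Gamma_\theta\to\R$ is non-zero, so there exists $g_0\in\Gamma_\theta$ with $I_\theta(g_0)<0$. The key algebraic fact I will use is that the formal series $\sum_{n\geq 0}g_0^n$ lies in $\Lambda_\theta$: indeed $I_\theta(g_0^n)=n\,I_\theta(g_0)\to-\infty$, so the Novikov finiteness condition is satisfied, and therefore $1-g_0$ is invertible in $\Lambda_\theta$ with inverse $\sum_{n\geq 0}g_0^n$.

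To prove $\hn_0(\theta)=0$, I first note that $\cn_{-1}(\theta)=0$, so every element of $\cn_0(\theta)$ is a cycle and it suffices to write each generator as a boundary. Let $\tilde z_1,\ldots,\tilde z_m$ be chosen lifts in $\overline{N}^\theta$ of the index-$0$ zeros of $\theta$ on $N$, so the full index-$0$ critical set of $f_\theta$ is $\{g\cdot\tilde z_i : g\in\Gamma_\theta,\, i=1,\ldots,m\}$. Since $N$ is connected and $\overline{N}^\theta\to N$ is the regular cover associated to $\ker I_\theta$, the space $\overline{N}^\theta$ is path-connected, hence the Morse $1$-skeleton (the graph whose vertices are minima and whose edges are unstable manifolds of index-$1$ critical points) is also connected. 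Summing the index-$1$ critical points that lie along a finite path from $\tilde z_i$ to $g_0\cdot\tilde z_i$ with appropriate Morse signs produces a finite chain $q_i\in\cn_1(\theta)$ satisfying $\partial q_i=(1-g_0)\tilde z_i$. The telescoping primitive
\[
\widehat q_i:=\sum_{n\geq 0}g_0^n\,q_i\in\cn_1(\theta)
\]
is a valid Novikov chain, and a telescoping computation gives $\partial\widehat q_i=\tilde z_i$. For a general Novikov 0-cycle $c=\sum_{i,h}a_{i,h}\,h\cdot\tilde z_i$, the element $\sum_{i,h}a_{i,h}\,h\cdot\widehat q_i$ is a primitive whose Novikov finiteness follows from that of $c$ together with the uniform bound $\max_i|f_\theta(q_i)-f_\theta(\tilde z_i)|$ over the finitely many orbit representatives.

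For $k=\dim N$, I would invoke Poincaré--Novikov duality $\hn_k(\theta)\cong\hn_{n-k}(-\theta)$, realized on the chain level by replacing $f_\theta$ with $-f_\theta$ (a primitive of $\pi^*(-\theta)$) and identifying critical points of Morse index $k$ for $f_\theta$ with those of index $n-k$ for $-f_\theta$. Since $-\theta$ is also closed and not exact, the previous step gives $\hn_0(-\theta)=0$, hence $\hn_n(\theta)=0$. The main obstacle I foresee is the sign-correct existence of $q_i$ with $\partial q_i=(1-g_0)\tilde z_i$: this requires a chain-level lift of the path-connectedness of $\overline{N}^\theta$, amounting to the fact that the formal difference $\tilde z_i-g_0\cdot\tilde z_i$ is a finite boundary in $\cn_*(\theta)$, which is the degree-zero reflection of $\h_0(\overline{N}^\theta;\Q)=\Q$ and follows from the standard Morse-theoretic principle that along a generic path between two minima the intermediate boundary contributions at index-$1$ critical points cancel in pairs.
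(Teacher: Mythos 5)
The paper does not prove this proposition; it simply cites \cite[Proposition 4.12]{ono2006floer}, so there is no in-paper argument to compare against. Your proposal is a genuine self-contained proof, and the telescoping argument for $k=0$ is correct: since $I_\theta(g_0)<0$, the series $\sum_{n\ge 0} g_0^n$ satisfies the Novikov finiteness condition, $1-g_0$ is a unit in $\Lambda_\theta$, and the finite chain $q_i$ with $\partial q_i=(1-g_0)\tilde z_i$ exists because $\overline{N}^\theta$ is connected and the Morse-theoretic $1$-skeleton of a (locally finite) cover of a closed manifold is connected, so the difference of two minima is the boundary of a finite $1$-chain. A slightly slicker way to phrase the same point: right-exactness of the completion gives $\hn_0(\theta)\cong \Q\otimes_{\Q[\Gamma_\theta]}\Lambda_\theta=\Lambda_\theta/I\Lambda_\theta$ with $I$ the augmentation ideal, and $I\Lambda_\theta$ contains the unit $1-g_0$, forcing the quotient to vanish.

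The one place where your write-up is imprecise is the phrase ``realized on the chain level by replacing $f_\theta$ with $-f_\theta$.'' The underlying sets of generators of $\cn_k(\theta)$ and $\cn_{n-k}(-\theta)$ do coincide, but the Novikov completion conditions are \emph{opposite} — one allows infinite tails with $f_\theta\to -\infty$, the other with $f_\theta\to +\infty$ — so there is no chain isomorphism identifying these two groups, and the boundary operators go in opposite degree-directions. What is true is that $(\cn_*(\theta),\partial)$ and $(\cn_{n-*}(-\theta),\partial')$ are dual complexes (the boundary matrices are transposes, after identifying $\Lambda_\theta\cong\Lambda_{-\theta}$ via $g\mapsto g^{-1}$), and since $\Lambda_\theta$ is a field (the ground ring here is $\Q$), a rank-nullity count gives $\dim_{\Lambda_\theta}\hn_k(\theta)=\dim_{\Lambda_{-\theta}}\hn_{n-k}(-\theta)$. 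Stated this way the reduction of $k=n$ to $k=0$ (for $-\theta$) is airtight; as written, a reader could reasonably object that you have asserted a chain-level duality that does not exist. The rest of the argument is fine.
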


In particular, when $[\theta]\neq0$, the Morse--Novikov homology $\hn_*(\theta) = \oplus_k \hn_k(\theta)$ is not isomorphic to the total Morse (or singular) homology $\h_*(N) = \oplus_k \h_k(N)$. Still, we have the following equality.

\begin{prop}[{\cite[Theorem C.4]{van1995symplectic}}]\label{eulernumber}
	The Euler number of the Morse--Novikov homology coincides with the standard Euler characteristic $\chi(N)$ of the closed manifold $N$, i.e.,
	$$\chi(\theta) := \sum_i (-1)^i \dim_{\Lambda_\theta} \hn_i(\theta) = \chi(N),$$
	for any $\theta \in \Omega^1(N)$.
\end{prop}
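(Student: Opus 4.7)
The plan is to identify $\chi(\theta)$ as a signed count of zeros of a Morse representative of $[\theta]$ and then invoke the Poincaré--Hopf theorem. Since $\hn_*(\theta)$ depends only on the cohomology class $[\theta] \in \h^1(N;\R)$, one may replace $\theta$ by a cohomologous Morse $1$-form; fix such a representative and a primitive $f_\theta \colon \overline{N}^\theta \to \R$ of $\pi^*\theta$. The strategy has two steps: first compute $\chi(\theta)$ at the chain level, then match the resulting signed zero count of $\theta$ with $\chi(N)$.

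For the chain-level computation, the deck group $\Gamma_\theta$ acts freely on $\mrm{Crit}(f_\theta)$, and each orbit projects bijectively onto a single zero of $\theta$ on $N$. Choosing one lift per zero of Morse index $k$ produces a free $\Lambda_\theta$-basis of $\cn_k(\theta)$, so $\dim_{\Lambda_\theta} \cn_k(\theta) = c_k(\theta)$, where $c_k(\theta)$ counts zeros of $\theta$ on $N$ of Morse index $k$. Since $\Lambda_\theta$ is a field (it is the Novikov field associated to a finitely generated subgroup of $\R$, whose linear ordering makes nonzero elements invertible via geometric series on their leading term), the dimension function $\dim_{\Lambda_\theta}$ is additive on short exact sequences, and hence
$$\chi(\theta) \;=\; \sum_k (-1)^k \dim_{\Lambda_\theta} \cn_k(\theta) \;=\; \sum_k (-1)^k c_k(\theta).$$

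To identify this sum with $\chi(N)$, fix a Riemannian metric $g$ on $N$ and consider the vector field $X = -g^\sharp(\theta)$. Its zero set coincides with that of $\theta$, and in local Morse coordinates around each zero $X$ is (a positive rescaling of) the negative gradient of the local primitive of $\theta$, so its Poincaré--Hopf index at a zero of Morse index $k$ equals $(-1)^k$. The Poincaré--Hopf theorem then yields $\sum_k (-1)^k c_k(\theta) = \chi(N)$, which combined with the chain-level computation gives $\chi(\theta) = \chi(N)$.

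The main subtlety is the identification $\dim_{\Lambda_\theta}\cn_k(\theta) = c_k(\theta)$, which rests on the fact that the Novikov completion is compatible with the orbit decomposition coming from the $\Gamma_\theta$-action on $\mrm{Crit}(f_\theta)$ and that $\Lambda_\theta$ is a field so that $\dim_{\Lambda_\theta}$ passes from chains to homology. The remaining ingredients---invariance of Morse--Novikov homology under the cohomology class and the Poincaré--Hopf theorem---are classical.
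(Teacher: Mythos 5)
Your argument is correct. Note that the paper does not give its own proof of this proposition; it simply cites Lê--Ono \cite[Theorem~C.4]{van1995symplectic}, so there is no in-text argument to compare against. Your proof is the standard one for this fact: the two essential observations are that $\Lambda_\theta$ is a field (since $\Gamma_\theta = \pi_1(N)/\ker I_\theta$ is isomorphic to its image in $\R$, hence torsion-free, finitely generated, and totally ordered, which makes the Novikov completion over $\Q$ a field, so the Euler characteristic of the homology equals that of the chain complex), and that $\dim_{\Lambda_\theta}\cn_k(\theta)$ equals the number of index-$k$ zeros of the Morse representative downstairs, after which Poincaré--Hopf for $X=-g^\sharp(\theta)$ finishes the job. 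Both steps are sound, and the discussion of the free $\Gamma_\theta$-action on $\mathrm{Crit}(f_\theta)$ correctly justifies the rank computation. This is essentially the argument in Lê--Ono, so while the paper offers no proof of its own, your proposal is a faithful and complete reconstruction.
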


Combining Proposition \ref{vanishHN} and Proposition \ref{eulernumber}, we obtain the following consequence for closed surfaces.

\begin{cor}\label{HNranksurfaces}
	Let $\Sigma$ be a closed orientable surface of genus $g\geq 1$ and $\theta \in \Omega^1(\Sigma)$ a nonexact closed $1$-form. Then
	$$\dim_{\Lambda_\theta}\hn_*(\theta)=\dim_{\Lambda_\theta}\hn_1(\theta) = -\chi(\Sigma) = 2g-2.$$ 
\end{cor}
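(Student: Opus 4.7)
The plan is to deduce this directly from the two propositions stated just before it, using the fact that a closed orientable surface has dimension~$2$. First I would apply Proposition~\ref{vanishHN} to $\theta$ on $\Sigma$: since $\theta$ is closed but nonexact and $\dim\Sigma=2$, the Morse--Novikov homology in degrees $k=0$ and $k=2$ vanishes over $\Lambda_\theta$. Consequently the total Morse--Novikov homology collapses to its degree-one part, giving the first equality $\dim_{\Lambda_\theta}\hn_*(\theta)=\dim_{\Lambda_\theta}\hn_1(\theta)$.

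Next I would invoke Proposition~\ref{eulernumber}, which states that the Euler characteristic of $\Sigma$ agrees with the alternating sum of the $\Lambda_\theta$-dimensions of $\hn_i(\theta)$. Because only the degree-one term survives, this alternating sum reduces to $-\dim_{\Lambda_\theta}\hn_1(\theta)$, so setting this equal to $\chi(\Sigma)=2-2g$ and solving yields $\dim_{\Lambda_\theta}\hn_1(\theta)=2g-2$, as claimed. There is no real obstacle here: the statement is a direct consequence of the two cited propositions once one records that $\dim\Sigma=2$ forces the Morse--Novikov homology to be concentrated in degree one.
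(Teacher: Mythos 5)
Your argument is correct and is exactly the one the paper intends: the corollary is stated immediately after the sentence ``Combining Proposition~\ref{vanishHN} and Proposition~\ref{eulernumber}, we obtain the following consequence for closed surfaces,'' and the deduction you give (vanishing in degrees $0$ and $2$, then reading off the Euler characteristic) is the whole proof. No further comment is needed.
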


\subsection{Symplectic diffeomorphisms and periodic points}
\label{sec:sympper}
Let $(M^{2n},\om)$ be a closed symplectic manifold and $\psi\in\Symp_0(M,\om)$ a symplectic diffeomorphism in the connected component of the identity of the group of symplectic diffeomorphisms $\Symp(M,\om)$. Given a symplectic isotopy $\psi_t$ connecting the identity $\psi_0 = id$ to $\psi_1 = \psi$, we define a symplectic time-dependent vector field $X_t$ by
$$\frac{d}{dt}\psi_t = X_t \circ \psi_t.$$
The \emph{flux homomorphism} is defined by
\begin{align*}
	\mathrm{Flux} \colon \widetilde{\Symp_0}(M,\om) \to \h^1(M;\R),\quad  [\psi_t] \to \left[\int_0^1 \omega(X_t, \cdot)\right],
\end{align*}
where $\widetilde{\Symp_0}(M,\om)$ denotes the universal covering of $\Symp_0(M,\om)$. The kernel of this homomorphism is the universal covering $\widetilde{\Ham}(M,\omega)$ of the group of Hamiltonian diffeomorphisms $\Ham(M,\omega)\subset \Symp_0(M,\om)$. Recall that $\psi$ is a Hamiltonian diffeomorphism precisely when the isotopy $\psi_t$ can be chosen such that $\omega(X_t, \cdot) = - dH_t$, for some Hamiltonian $H \colon \R \times M \to \R$, where $H(t,\cdot) = H_t$.

Let $\theta$ be a closed $1$-form representing the flux of the isotopy $\psi_t$, that is, $\mathrm{Flux}[\psi_t] = [\theta]$. By the Deformation Lemma \cite[Lemma 2.1]{van1995symplectic}, we may suppose, up to composing $\psi_t$ with a loop of Hamiltonian diffeomorphisms, that $$\omega(\cdot, X_t) = \theta + dH_t := \theta_t,$$
for a periodic Hamiltonian $H_t \colon M \to \R$, $t\in S^1 = \R/\Z$.

In this case, each $\theta_t$ represents the same cohomology class $[\theta] \in \h^1(M;\R)$ and there is a one-to-one correspondence between the set $\mathcal{P}(\theta_t)$, consisting of $1$-periodic solutions of $\dot{x} = X_t(x)$, and the fixed points of $\psi = \psi_1$.

We recall that a $1$-periodic orbit $x \in \mathcal{P}(\theta_t)$ is called \emph{nondegenerate} if the linear symplectic map $d\psi_{x(0)} \colon T_{x(0)}M \to T_{x(0)}M$ does not admit $1$ as an eigenvalue. Moreover, $\psi$ is called \emph{nondegenerate} if all $1$-periodic orbits of $X_t$ are nondegenerate. If $\psi$ is nondegenerate, the set $\mathcal{P}(\theta_t)$ is finite, provided $M$ is compact.

The set $\mathcal{P}(\theta_t)$ agrees with the zero set of the following $1$-form $\alpha_{\psi_t}$ on the free loop space $\mathcal{L}M$ over $M$
\begin{equation}\label{1formzeroset}
	\alpha_{\psi_t}(\xi) = \int_0^1 \om(\dot{x}-X_t,\xi)dt = \int_0^1 \omega(\dot{x},\xi) + \theta_t(\xi)dt,
\end{equation}
where $x \in \mathcal{L}M$ and $\xi \in T_x\mathcal{L}M = \Gamma(x^*TM)$.

\subsection{Mean index and Conley--Zehnder index}
Let $x \in \mathcal{P}(\theta_t)$ be a contractible periodic orbit. A \emph{capping} of $x$ is a disk $u\colon D^2 \to M$ such that $u|_{\partial D^2} = x$. A \emph{capped periodic orbit} $(x,u)$ consists of a periodic orbit $x\in \mathcal{P}(\theta_t)$ and a capping $u$ of $x$.

Given a capped orbit $(x,u)$, we have a natural trivialization of $x^*TM$ coming from $u$ and, under this trivialization, the linearized flow $d\psi_t \colon T_{x(0)}M \to T_{x(t)}M$ can be interpreted as a path of symplectic matrices $\Psi \colon [0,1] \to Sp(2n)$. For such a path, one can define the \emph{mean index} $\Delta(\Psi) \in \R$ which measures, roughly speaking, the total rotation angle swept by certain eigenvalues of $d\psi_t$ on the unit circle. Moreover, if $x$ is a nondegenerate periodic orbit, the end point $\Psi(1)$ of the path of symplectic matrices is a nondegenerate matrix, i.e., $\Psi(1)$ does not admit $1$ as an eigenvalue. In this case, one can define the $\emph{Conley--Zehnder index}$  $\cz(\Psi) \in \Z$. For a capped orbit $(x,u)$, we define\footnote{Note that $\Delta(\psi_t,x,u)$ is well defined for any capped periodic orbit while $\cz(\psi_t,x,u)$ is defined when the periodic orbit $x$ is nondegenerate.} $\Delta(\psi_t,x,u) = \Delta(\Psi)$ and $\cz(\psi_t,x,u) = \cz(\Psi)$. For the precise definition of $\Delta(\Psi)$ and $\cz(\Psi)$, we refer the reader to \cite{salamon1992morse}.

Now, we recollect some properties of $\Delta(\psi_t,x,u)$ and $\cz(\psi_t,x,u)$ that we shall use in this paper. Let $(x,u)$ be a capped periodic orbit. The $k$-th iteration of $x\colon \R/\Z \to M$ is the orbit $x^k \colon [0,k] \to M, x^k(t)=x(t)$. We denote by $(x^k,u^k)$ the capped orbit obtained by iterating $k$ times the orbit $x$; here $u^k$ is the natural capping of $x^k$ obtained from $u$. 
\begin{enumerate}
	\item (Dependence on the capping) Let $u$ and $v$ be two cappings of a periodic orbit $x$. We have $$\Delta(\psi_t,x,u) = \Delta(\psi_t,x,v) + 2\langle c_1(TM), v \# (-u) \rangle$$ and $$\cz(\psi_t,x,u) = \cz(\psi_t,x,v) + 2\langle c_1(TM), v \# (-u) \rangle,$$ where $c_1(TM) \in \h^2(M;\Z)$ denotes the usual Chern class. \label{cappingdep} \\
	\item (Iteration formula) $\Delta(\psi_t^k,x^k,u^k) = k \Delta(\psi_t,x,u)$. \label{iteformula} \\
	\item (Continuity) Let $\psi^\prime_t$ be a symplectic isotopy obtained from $\psi_t$ by a $C^1$-small perturbation. If $(y,v)$ is a capped periodic orbit for $\psi^\prime_t$, then $$\vert \Delta(\psi_t,x,u) - \Delta(\psi^\prime_t,y,v) \vert$$ is small. \\
	\item (Relation between $\Delta$ and $\cz$) For a nondegenerate periodic orbit $x$ and a capping $u$ of $x$, we have \label{reldeltacz}
	\begin{equation*}
		\vert \cz(\psi_t,x,u) - \Delta(\psi_t,x,u) \vert < n.
	\end{equation*}
	Moreover, if $(x,u)$ splits into $(x_1,u_1),\ldots,(x_m,u_m)$ under a sufficiently $C^1$-small nondegenerate perturbation of $\psi_t$, then 
	\begin{equation*}
		\vert \cz(\psi^\prime_t,x_i,u_i) - \Delta(\psi_t,x,u) \vert \leq n,
	\end{equation*}
	for $i=1,\ldots,m$.
\end{enumerate}

\subsection{Floer--Novikov Homology}\label{hfnsection}
From this point forward, unless otherwise explicitly stated, we suppose for simplicity that $(M,\om)$ is \emph{symplectically Calabi--Yau}, i.e., $c_1(TM)|_{\pi_2(M)} = 0$. In this case, property \eqref{cappingdep} above implies that, when well-defined, neither the mean index nor the Conley--Zehnder index depends on the choice of a capping of the orbit. Hence, hereafter we drop the capping from both notations.

Recall that $\psi_t$ is a symplectic isotopy such that $\psi_0=id$ and $\psi_1=\psi \in \Symp_0(M,\omega)$ and $\mathrm{Flux}[\psi_t] = [\theta]$.

Let $\mathcal{L}_0M$ denote the component of contractible\footnote{We note that, as explained in \cite{van2020floer}, the construction can be adapted for non-contractible loops; however, in this work, we focus on contractible loops} loops on $M$ and fix a base point $p \in M$. As in Section \ref{secmorsenovikov}, let $\pi \colon \overbar{M}^\theta \to M$ be the minimal covering space of $M$ where $\pi^*\theta$ is an exact $1$-form. Again, its covering transformation group is isomorphic to 
$$\Gamma_\theta = \frac{\pi_1(M)}{\ker I_\theta}.$$
In the following, we construct a suitable covering space of $\mathcal{L}_0M$ where the $1$-form $\alpha_{\psi_t}$ becomes exact. Consider the evaluation map
\begin{align*}
	e \colon \mathcal{L}_0M &\to M \\ x &\mapsto x(0)
\end{align*}
and the homomorphism $\mathcal{I}_\theta = I_\theta \circ e_* \colon \pi_1(\mathcal{L}_0M)\to \R$, where $e_*\colon \pi_1(\mathcal{L}_0M) \to \pi_1(M)$ is the homomorphism induced by the evaluation map $e$. Also, consider the homomorphism
\begin{align*}
	\mathcal{I}_{\omega} \colon \pi_1(\mathcal{L}_0M) &\to \R \\ {x_s} &\mapsto \int_{C(\{x_s\})} \omega,
\end{align*}
where $C(\{x_s\})$ is the ``torus'' in $M$ swept out by $\{x_s\}$:
$$(s,t) \in S^1 \times S^1 \mapsto x_s(t) \in M.$$
Let $\Pi \colon \tilde{\mathcal{L}}\overbar{M}^\theta \to \mathcal{L}_0M$ be the covering space associated with
$$\ker \mathcal{I}_\om \cap \ker \mathcal{I}_\theta \subset \pi_1(\mathcal{L}_0M).$$
This covering space can be seen as the set of equivalence classes $[x,u]$ of the pairs $(x,u)$, where $x\in \mathcal{L}_0M$, $u$ is a capping of $x$ and $(x,u) \sim (y,v)$ if, and only if, $x = y$,
\begin{align*}
	\int_u \om = \int_v \om \quad \text{and}\quad\int_{l_{u,0}}\theta = \int_{l_{v,0}} \theta
\end{align*}
Here $l_{u,t}(s):= u(se^{2\pi it})$, for $s\in [0,1]$, and $l_{v,t}$ is defined analogously. From now on, we shall denote by $\overline{x}$ such an equivalence class $[x,u]$. The covering transformation group of $\Pi$ is isomorphic to
$$\Gamma_{\theta,\om} := \frac{\pi_1(\mathcal{L}_0M)}{\ker \mathcal{I}_{\om} \cap \ker \mathcal{I}_\theta}.$$
In this covering space, we choose $\mathcal{A}_{\psi_t} \colon \tilde{\mathcal{L}}\overbar{M}^\theta \to \R$ to be the following primitive for the $1$-form $\alpha_{\psi_t}$:
\begin{equation}\label{action}
	\mathcal{A}_{\psi_t}(\overline{x}) = -\int_u \om + \int_{S^1} H_t(x(t))dt + \int_0^1\int_{l_{u,t}}\theta dt 
\end{equation}
In particular, the set of critical points of the action functional $\mathcal{A}_{\psi_t}$ corresponds to the zero set of $\alpha_{\psi_t}$, which is exactly given by the lifts of the $1$-periodic orbits in $\mathcal{P}(\theta_t)$ to the covering $\tilde{\mathcal{L}}\overbar{M}^\theta$. Denote by $\mathcal{S}(\theta_t)$ the set of critical values of the functional $\mathcal{A}_{\psi_t}$, often called the \emph{action spectrum}.

To define the variant of Floer--Novikov Homology we shall use in this paper, first, suppose that the symplectic diffeomorphism $\psi = \psi_1$ is nondegenerate. Fix $\Q$ as the ground field. For $b \in (-\infty,\infty]$ not in $\mathcal{S}(\theta_t)$, we define
\begin{align*}
	\cfn_k^{<b}(\theta_t) = \Big\{\sum_i a_i \overline{x}_i \ \vert \ a_i \in \Q, \overline{x}_i \in \mathrm{Crit}\mathcal{A}_{\psi_t},\  \text{satisfying the conditions below}\Big\}:
\end{align*}
\begin{itemize}
	\item the set $\{i \ \vert \ a_i\neq 0, \mathcal{A}_{\psi_t}(\overline{x}_i)>c\}$ is finite for any $c \in \R$;
	\item $\cz(\psi_t,x_i) = k$;
	\item $\mathcal{A}_{\psi_t}(\overline{x}_i)<b$.
\end{itemize}
When $b=+\infty$, this $\Q$-vector space is a finitely generated free module over $\Lambda_{\omega,\theta}$, where the latter is the Novikov ring given by the completion of the group ring of $\Gamma_{\theta,\om}$. Equivalently,
\begin{align*}
	\Lambda_{\om,\theta} = \Big\{\sum_i a_i g_i \ \vert \ a_i \in \Q, g_i \in \Gamma_{\theta,\om}, \  \text{satisfying the condition below}\Big\}:
\end{align*}
\begin{itemize}
	\item the set $\{i \ \vert \ a_i \neq 0, (\mathcal{I}_\om+\mathcal{I}_{\theta})(g_i)>c\}$ is finite for any $c\in \R$.
\end{itemize}
Moreover, this vector space is supplied with the Floer boundary operator $\partial$ defined by counting (with signs) the isolated rigid solutions of the asymptotic boundary value problem on cylinders $\nu \colon \R \times S^1 \to M$ defined by the antigradient of $\mathcal{A}_{\psi_t}$. More precisely, the boundary operator counts, modulo $\R$-translation, the finite energy solutions $\nu =~\nu(\tau,t)$ of the \emph{Floer equation}
\begin{equation}\label{floereq}
	\frac{\partial}{\partial \tau} \nu(\tau,t)+J_t\left(\frac{\partial}{\partial t} \nu(\tau,t) - X_t(\nu(\tau,t))\right)=0,
\end{equation}
satisfying $\lim_{\tau \to \pm \infty} \nu(\tau,t) = x_{\pm}(t)$, for periodic orbits $x_{\pm} \in \mathcal{P}(\theta_t)$ such that $\cz(\psi_t,x_-) - \cz(\psi_t,x_+) = 1$. Here the energy of a solution $\nu$ is given by
\begin{equation}\label{energydef}
	E(\nu) = \int_{-\infty}^\infty \left\Vert \frac{\partial \nu}{\partial \tau} \right\Vert_{L^2(S^1)}d\tau=\int_{-\infty}^\infty \int_{S^1}\left\Vert \frac{\partial \nu}{\partial t}-X_t(\nu)\right\Vert dt d\tau .
\end{equation}
Furthermore, in \eqref{floereq}, $J_t$, $t \in S^1$, denotes a $1$-periodic family of $\om$-compatible almost complex structures on $M$ satisfying the standard regularity requirements that hold generically, see \cite{hofer1995floer,van1995symplectic}. Counting the solutions $\nu$ of \eqref{floereq} satisfying $[x_+,u_{-}\#\nu]=[x_+,u_+]$, yields a well-defined boundary operator:
$$\partial_J \colon \cfn_k^{<b}(\theta_t) \to \cfn_{k-1}^{<b}(\theta_t)$$
such that $\partial_J^2=\partial_J \circ \partial_J = 0$. Since this boundary operator decreases the action, i.e., if there exists such a solution $\nu$ of \eqref{floereq} \emph{connecting $x_-$ to $x_+$}, that is, satisfying $\lim_{\tau \to \pm \infty} \nu(\tau,t) = x_{\pm}(t)$, then $\mathcal{A}_{\psi_t}(\overline{x}_-)>\mathcal{A}_{\psi_t}(\overline{x}_+)$, it descends to a boundary operator on the quotient
$$\cfn_k^{(a,b)}(\theta_t) := \frac{\cfn_k^{<b}(\theta_t)}{\cfn_k^{<a}(\theta_t)},$$
where $a,b \notin \mathcal{S}(\theta_t)$, and $-\infty \leq a < b \leq \infty$. The resulting homology
$$\hfn_*^{(a,b)}(\theta_t) = \h_*(\cfn_k^{(a,b)}(\theta_t), \partial_J),$$
is called \emph{filtered Floer--Novikov homology} of the $1$-form\footnote{It is more common to denote Floer--Novikov homology by $\hfn_{*}(\psi_t)$ emphasizing the dependence on the isotopy $\psi_t$; however, in this work, we consider more convenient to highlight the dependence on the $1$-form when using filtered Floer--Novikov homology.} $\theta_t$ and it is independent of the family of almost complex structures $J_t$. Since $\cfn_*^{(b,c)}(\theta_t) = \cfn_*^{(a,c)}(\theta_t)/\cfn_*^{(a,b)}(\theta_t)$ when $a<b<c$, we have a long exact sequence
\begin{equation}\label{leshfn}
	\cdots \to \hfn_*^{(a,b)}(\theta_t) \to \hfn_*^{(a,c)}(\theta_t) \to \hfn_*^{(b,c)}(\theta_t) \to \cdots.
\end{equation}
The total homology
$$\hfn_*(\psi_t)=\hfn_*^{(-\infty,\infty)}(\theta_t)=\h_*(\cfn_k^{<\infty}(\theta_t),\partial_J)$$
is called the \emph{Floer--Novikov homology} of $\psi_t$ and it depends on the flux $[\theta] \in H^1(M;\R)$ but is independent of $\theta_t$.

\begin{thm}[{\cite[Theorem 4.3]{van1995symplectic}} and {\cite[Theorem 2.11]{van2020floer}}]
	Let $\psi_t$ and $\psi_t^\prime$ be two symplectic isotopies with the same flux. Then we have
	$$\hfn_k(\psi_t) \cong \hfn_k(\psi_t^\prime),$$
	for every $k$.
\end{thm}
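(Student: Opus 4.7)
The plan is to prove this invariance by the standard continuation argument in Floer theory, adapted to the Novikov setting. First I would reduce to the case where $\psi_t$ and $\psi_t'$ are both nondegenerate at time one: a small generic perturbation inside the fixed flux class yields nondegeneracy, and the Deformation Lemma quoted in Section~\ref{sec:sympper} lets me write both isotopies as being generated by vector fields $X_t, X_t'$ satisfying $\om(\cdot,X_t) = \theta + dH_t$ and $\om(\cdot,X_t') = \theta + dH_t'$ for the same representative $\theta$ of the common flux class. Since $(\psi_t')^{-1}\psi_t$ is then a loop of symplectomorphisms with vanishing flux, the problem effectively reduces to comparing two $1$-form families $\theta_t = \theta + dH_t$ and $\theta_t' = \theta + dH_t'$ whose difference is a family of exact forms.

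Next I would build a continuation chain map $\Phi\colon \cfn_*(\theta_t) \to \cfn_*(\theta_t')$ by choosing a monotone homotopy $\theta_t^s := \theta + dH_t^s$, $s\in\R$, with $H_t^s = H_t$ for $s\ll 0$ and $H_t^s = H_t'$ for $s\gg 0$, together with an $s$-dependent family $J_{s,t}$ of $\om$-compatible almost complex structures. The map counts finite-energy solutions of the $s$-dependent Floer equation
\begin{equation*}
\partial_\tau \nu + J_{\tau,t}\bigl(\partial_t \nu - X_{\tau,t}(\nu)\bigr) = 0
\end{equation*}
with asymptotics in $\mathcal{P}(\theta_t)$ at $-\infty$ and $\mathcal{P}(\theta_t')$ at $+\infty$, lifted to the covering $\tilde{\mathcal{L}}\overbar{M}^\theta$. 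As usual, $\Phi$ is a chain map by analyzing the boundary of one-dimensional moduli spaces, and reversing the direction of the homotopy yields $\Phi'\colon \cfn_*(\theta_t') \to \cfn_*(\theta_t)$. A parametrized moduli space for a two-parameter homotopy of homotopies produces a chain homotopy between $\Phi'\circ\Phi$ and the identity (and similarly for $\Phi\circ\Phi'$), establishing the isomorphism on homology.

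The main obstacle, and the place where working in the Novikov completion matters, is compactness of the moduli spaces. Because the action functional $\mathcal{A}_{\psi_t}$ is multi-valued with period group generated by $\mathcal{I}_\om$ and $\mathcal{I}_\theta$, the energy of a continuation trajectory between fixed lifts $\overbar{x}_-$ and $\overbar{x}_+$ is no longer a simple action difference: it picks up contributions from the $s$-derivative of $H_t^s$. I would derive the standard a priori energy estimate
\begin{equation*}
E(\nu) = \mathcal{A}_{\psi_t}(\overbar{x}_-) - \mathcal{A}_{\psi_t'}(\overbar{x}_+) + \int_{-\infty}^{\infty}\!\!\int_{S^1} \partial_s H_t^s(\nu)\, dt\, d\tau,
\end{equation*}
which, combined with a uniform $C^0$-bound coming from the monotone homotopy, controls $E(\nu)$ for each pair of asymptotics. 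The key finiteness statement to verify, for the matrix coefficients of $\Phi$ to lie in $\Lambda_{\om,\theta}$, is that for fixed $\overbar{x}_-$ and a bound on the action of $\overbar{x}_+$ there are only finitely many homotopy classes of cylinders supporting solutions; this follows from the energy identity above together with Gromov compactness applied fiberwise over the covering.

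Finally, I would check that $\Phi$ respects the action filtration up to the shift prescribed by the interpolation (so that the long exact sequence \eqref{leshfn} is preserved), and that the chain homotopies constructed in the reverse direction satisfy the analogous bounds. The subtle point throughout is to keep track of the $\Gamma_{\theta,\om}$-equivariance: all moduli spaces, boundary operators, and continuation maps must be defined equivariantly on the cover $\tilde{\mathcal{L}}\overbar{M}^\theta$ so that they descend to maps of $\Lambda_{\om,\theta}$-modules, and this equivariance, rather than any new analytical input, is what makes the standard Hamiltonian Floer continuation argument go through in this non-Hamiltonian setting.
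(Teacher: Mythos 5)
Your proposal is the standard continuation argument that underlies the cited result of L\^{e}--Ono, and it matches what the paper sketches in Section~\ref{sec:homotopy}: reduce via the Deformation Lemma to $1$-form families $\theta + dH_t$ and $\theta + dH_t'$ with a common closed representative $\theta$ of the flux, build $s$-parametrized continuation maps on the cover $\tilde{\mathcal{L}}\overbar{M}^\theta$, use the energy identity to control Novikov finiteness, and invoke $\Gamma_{\theta,\om}$-equivariance so that everything descends to $\Lambda_{\om,\theta}$-module maps. The paper does not give its own proof (the theorem is quoted from \cite{van1995symplectic,van2020floer}), but its description of the homotopy maps and the energy estimate $E(\nu) - (\mathcal{A}_{\psi_{0,t}}(\ol{x}_-) - \mathcal{A}_{\psi_{1,t}}(\ol{x}_+)) \le \int\!\!\int \max_M \partial_s H_{s,t}$ is exactly what you use.

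Two minor slips in your exposition, neither of which affects the conclusion. First, $(\psi_t')^{-1}\psi_t$ is a \emph{path} with vanishing flux, not a loop: $\psi_1$ and $\psi_1'$ need not coincide (the theorem is stated for arbitrary isotopies with equal flux), so this is a Hamiltonian path rather than a Hamiltonian loop. The correct reduction, which you in fact then carry out, is simply that the Deformation Lemma lets you write both generating $1$-forms with the same closed piece $\theta$, so their difference is exact. Second, monotonicity of the interpolating homotopy is not needed for the unfiltered statement: since $E(\nu)\ge 0$, the energy identity already gives $\mathcal{A}(\ol{x}_+) \le \mathcal{A}(\ol{x}_-) + C$ with $C$ the total variation of the homotopy, which is the Novikov finiteness you need. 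Monotonicity only becomes essential when one wants the continuation map to respect a fixed action window, as in the filtered isomorphism \eqref{homotiso}; for the total homology $\hfn_k$ any bounded homotopy will do.
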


Moreover, when the flux $[\theta]$ is sufficiently small, one can pick the representative $\theta$ being Morse and $C^1$-small such that the Floer--Novikov chain complex for $\psi_t$ can be described by the Morse--Novikov chain complex for $\theta$. In our convention, a critical point $q$ of a Morse primitive $f_\theta \colon \overline{M}^\theta \to \R$ can be interpreted as a $1$-periodic orbit $q \in \mathcal{P}(\theta_t)$ such that $\cz(\psi_t,q) = \mathrm{index}_{f_\theta}(q) + n$. So, we have the following

\begin{thm}[{\cite[Proposition 2.12]{van2020floer}}]
	If the flux $[\theta]= \mathrm{Flux}[\psi_t]$ is sufficiently small, then 
	$$\hfn_k(\psi_t) \cong \hn_{k+n}(\theta) \otimes_{\Lambda_\theta} \Lambda_{\om,\theta}.$$
\end{thm}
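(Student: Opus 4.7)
The plan is to adapt the classical Floer-to-Morse reduction of Hofer--Salamon to the Novikov setting for symplectic isotopies. First I would use the Deformation Lemma to bring $\psi_t$ to a normal form with generator $\theta_t = \theta + dH_t$, where $\theta$ is a Morse representative of the flux class and $H_t$ is a small periodic Hamiltonian; by ``sufficiently small flux'' I mean that one may rescale so that both $\theta$ and $H_t$ are $C^1$-small. Under this smallness, the Implicit Function Theorem applied to the zero set of the $1$-form $\alpha_{\psi_t}$ on $\cL_0 M$ (cf.\ \eqref{1formzeroset}) shows that every contractible $1$-periodic orbit of $X_t$ is a constant orbit sitting at a zero of $\theta$, i.e., a critical point of the primitive $f_\theta \colon \overline{M}^\theta \to \R$; lifted to $\tilde{\cL}\overline{M}^\theta$, such orbits are in bijection with $\crit(f_\theta) \times \Ga_{\om,\theta}/\Ga_\theta$, which accounts precisely for the tensor factor $\Lambda_{\om,\theta}$ over $\Lambda_\theta$.

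Next I would match the gradings. For a constant capped orbit $q \in \crit(f_\theta)$, the linearized flow $d\psi_t$ is, up to $C^1$-small terms, the flow of the linear symplectic ODE generated by the Hessian of $f_\theta$ read off by the chosen trivialization. A direct computation for such a small symplectic path (the same one that underlies the standard ``small Hamiltonian'' calculation) gives the mean index $\Delta(\psi_t, q)$ close to zero and yields the degree shift
\begin{equation*}
\cz(\psi_t,q) = \mrm{index}_{f_\theta}(q) - n + 2n = \mrm{index}_{f_\theta}(q) + n,
\end{equation*}
with the sign convention of the excerpt. This identifies the graded generators of $\cfn_k^{<\infty}(\theta_t)$ with those of $\cn_{k+n}(\theta)\otimes_{\Lambda_\theta}\Lambda_{\om,\theta}$.

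To identify the differentials I would run the standard Floer--Morse comparison on the cover: choose a $t$-independent, $\om$-compatible almost complex structure $J$ and the metric $g = \om(\cdot, J\cdot)$ on $M$; lift everything to $\overline{M}^\theta$ where $\pi^{\ast}\theta = df_\theta$. The Floer equation \eqref{floereq} for $\psi_t$ lifts to a Floer equation for the exact symplectic isotopy generated by $df_\theta + dH_t$, and for $\theta$ and $H_t$ sufficiently $C^1$-small one has the standard a priori $L^\infty$ bound on $\partial_t \nu - X_t(\nu)$, so that every finite-energy Floer cylinder has image contained in a small neighborhood of a critical point. A reparametrization / rescaling argument then identifies these cylinders, in a sign-preserving way, with the rigid antigradient flow lines of $f_\theta$ with respect to $\pi^{\ast}g$; this is exactly the construction that puts the Morse--Novikov chain complex $\cn_*(\theta)$ inside $\cfn_*(\theta_t)$.

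The main obstacle, and where I would spend most of the effort, is the Gromov-type compactness in the Novikov setting needed to close this argument: on $\overline{M}^\theta$ the form $\om$ is not exact, so one must control bubbling and ``long'' Floer trajectories using the Novikov condition on the action functional $\cA_{\psi_t}$ of \eqref{action} rather than a naive energy bound. In the symplectically Calabi--Yau case this is well known -- bubbling is excluded by $c_1(TM)|_{\pi_2(M)} = 0$ combined with index reasons, while the finiteness condition in the definition of $\Lambda_{\om,\theta}$ guarantees that only finitely many homotopy classes of cylinders contribute below any fixed action window. Once these estimates are in place, the Floer continuation / regularity package in \cite{van1995symplectic, van2020floer} provides the required isomorphism of chain complexes over $\Lambda_{\om,\theta}$, and passing to homology yields $\hfn_k(\psi_t) \cong \hn_{k+n}(\theta) \otimes_{\Lambda_\theta} \Lambda_{\om,\theta}$, as claimed.
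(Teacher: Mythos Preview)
The paper does not supply its own proof of this statement: it is quoted verbatim as \cite[Proposition~2.12]{van2020floer}, with only the one-sentence remark preceding it that ``when the flux $[\theta]$ is sufficiently small, one can pick the representative $\theta$ being Morse and $C^1$-small such that the Floer--Novikov chain complex for $\psi_t$ can be described by the Morse--Novikov chain complex for $\theta$'', together with the grading formula $\cz(\psi_t,q)=\mathrm{index}_{f_\theta}(q)+n$. Your proposal is exactly a fleshed-out version of that sentence --- the Hofer--Salamon reduction adapted to the Novikov cover --- so there is nothing to compare: you are reconstructing the standard argument behind the cited result, and your outline is correct.

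One small slip worth fixing: you write that ``every finite-energy Floer cylinder has image contained in a small neighborhood of a critical point''. That is not what the $C^1$-smallness buys you; rather, each Floer cylinder is $t$-independent and $C^1$-close to a Morse antigradient trajectory (which may well travel far across $M$). The rescaling/graph argument you invoke in the next sentence is the right mechanism, so this is only a wording issue, not a gap in the strategy.
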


This isomorphism holds only for sufficiently small flux. Lê and Ono define different variants of this (co)homology to compare the rank of Floer--Novikov homology and the rank of the Morse--Novikov homology for the flux $[\theta]$.

\begin{thm}[{\cite[Theorem 3.1]{van2020floer}}]\label{hfnandhn}
	Let $\psi_t \in \Symp_0(M,\om)$ be a symplectic isotopy such that $\psi_0=id$ and $\psi_1$ is a nondegenerate symplectic diffeomorphism. Suppose $\mathrm{Flux}[\psi_t] = [\theta]$. Then we have
	$$\mathrm{rank}_{\Lambda_{\om,\theta}} \hfn_k(\psi_t) = \dim_{\Lambda_\theta} \hn_{k+n}(\theta).$$
\end{thm}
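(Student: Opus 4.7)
The plan is to reduce the general flux case to the small flux case already covered by Proposition 2.12, via a deformation/scaling argument on the flux class. The natural target is to show that for a fixed $[\theta]=\mathrm{Flux}[\psi_t]$ the rank of $\hfn_k(\psi_t)$ over $\Lambda_{\om,\theta}$ is unchanged under positive rescaling of the flux, since then shrinking $r\searrow 0$ would land in the regime where the isomorphism $\hfn_k(\psi_t^{(r)})\cong \hn_{k+n}(r\theta)\otimes_{\Lambda_{r\theta}}\Lambda_{\om,r\theta}$ from the preceding theorem is available.

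First, I would normalize the data. By the Deformation Lemma, up to composing with a loop of Hamiltonian diffeomorphisms (which does not change $\hfn_*$), we may assume $\om(\cdot,X_t)=\theta+dH_t$ with $\theta$ a fixed Morse representative of the flux class. For each $r\in(0,1]$, set $\theta^{(r)}_t:=r\theta+dH_t$, let $X_t^{(r)}$ be the associated vector field, and $\psi_t^{(r)}$ its isotopy, so that $\mathrm{Flux}[\psi^{(r)}_t]=r[\theta]$. The $1$-periodic orbits, action functional, Novikov ring, and Floer chain complex all make sense uniformly in $r$, and the Novikov condition on $\Lambda_{\om,r\theta}$ is invariant under positive rescaling of $\theta$, so $\Gamma_{\theta,\om}=\Gamma_{r\theta,\om}$ as groups and $\Lambda_{\om,r\theta}\cong\Lambda_{\om,\theta}$ canonically (the valuation is scaled, but module ranks are insensitive to this).

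Next, I would run a one-parameter continuation argument in $r$. The key point is that the Floer differential depends continuously on $r$ and the $L^2$-energy formula \eqref{energydef} together with the action computation \eqref{action} provides action bounds which remain uniform on compact sets of parameters, so for generic choices of $J_t$ and $H_t$ the moduli spaces of Floer trajectories for $\psi_t^{(r)}$ form smooth cobordisms as $r$ varies. This yields continuation chain equivalences $\cfn_*(\psi_t^{(r_1)})\simeq \cfn_*(\psi_t^{(r_2)})$ (as modules over the identified Novikov rings) for any $r_1,r_2\in(0,1]$, and hence
\begin{equation*}
\mathrm{rank}_{\Lambda_{\om,\theta}}\hfn_k(\psi_t) \;=\; \mathrm{rank}_{\Lambda_{\om,r\theta}}\hfn_k(\psi_t^{(r)})
\end{equation*}
for all $r\in(0,1]$. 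Choosing $r$ small enough that the hypothesis of the preceding theorem (which is invoked as a black box) applies to $\psi_t^{(r)}$, the right side equals $\dim_{\Lambda_{r\theta}}\hn_{k+n}(r\theta)$. Finally, since $\hn_*(\theta)$ depends only on $[\theta]\in\h^1(M;\R)$ and is invariant under positive rescaling of the class (again because $\ker I_{r\theta}=\ker I_\theta$, so $\Lambda_{r\theta}=\Lambda_\theta$ and the Morse chain complex for $r f_\theta$ coincides with that for $f_\theta$ up to reindexing gradient flow lines, which is inessential), we get $\dim_{\Lambda_{r\theta}}\hn_{k+n}(r\theta)=\dim_{\Lambda_\theta}\hn_{k+n}(\theta)$, completing the argument.

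The main obstacle is the continuation step: one must ensure that rank does not jump as $r$ varies, which is subtle because the action spectrum $\mathcal{S}(\theta_t^{(r)})$ depends on $r$ and orbits can in principle appear or disappear in pairs. The standard remedy is to work with a parametric moduli space and to show that the chain-level continuation map between generic slices is a quasi-isomorphism of free $\Lambda_{\om,\theta}$-modules, using that the Novikov completion absorbs any potentially unbounded families of trajectories through the action filtration. This is precisely where the completeness of the Novikov ring and the downward-boundedness in \eqref{action} are essential, and it is the step that must be executed with care, mirroring the analogous continuation argument in ordinary Hamiltonian Floer homology but tracking the non-exact contribution of $\theta$ throughout.
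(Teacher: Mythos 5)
The paper does not prove this theorem: it is imported verbatim from L\^e--Ono (Theorem 3.1 of the 2020 paper), and the text's only own contribution is the remark immediately after the statement that the argument in the source works degree by degree in the symplectically Calabi--Yau case because the homotopy maps are degree-preserving. So there is no proof in the paper to compare against, and your proposal must stand or fall on its own.

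Your rescaling--continuation argument has a genuine gap precisely at the step you yourself flag as the ``main obstacle,'' and the remedy you propose does not resolve it. The parametric energy--action identity for a trajectory $\nu$ of the $s$-dependent Floer equation reads
$\mathcal{A}_{r(+\infty)}(\overline{x}_+)-\mathcal{A}_{r(-\infty)}(\overline{x}_-)=-E(\nu)+\int_{-\infty}^{\infty}\bigl(\partial_s\mathcal{A}_{s}\bigr)(\nu(s,\cdot))\,ds$.
When the flux class is fixed along the homotopy, $\partial_s\mathcal{A}_s$ is just $\int_{S^1}\partial_s H_{s,t}\,dt$, which is bounded by $\int\int\max_M \partial_s H_{s,t}$; this is exactly the constant $c$ in \eqref{homotopymap}, and it is what makes the chain-level map land in the Novikov completion. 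When the flux changes, as in your family $\theta^{(r(s))}_t = r(s)\theta + dH_t$, the extra term $r'(s)\int_0^1\int_{l_{u_s,t}}\theta\,dt$ appears, and $\int_0^1\int_{l_{u_s,t}}\theta\,dt$ depends on the running capping $u_s = u_-\#\nu|_{(-\infty,s]}$. This quantity is unbounded as the target $\overline{x}_+$ runs over deck translates: it shifts by $\mathcal{I}_\theta(g)$ under $g\in\Gamma_{\theta,\om}$, and $\theta$ is not exact. Consequently there is no constant $c$ controlling the action of the outputs, and the image of the putative continuation map may have infinitely many terms with action above any fixed level. The Novikov condition forbids exactly this; the completion ``absorbs'' only tails going to $-\infty$, not to $+\infty$. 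In other words, the continuation map is not even well-defined as a map into the Novikov-completed chain complex, let alone a quasi-isomorphism, which is why the paper's Section~\ref{sec:homotopy} and L\^e--Ono's invariance Theorem are explicitly restricted to homotopies with constant flux.

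A secondary issue is the claim that $\Lambda_{\om,r\theta}$ and $\Lambda_{\om,\theta}$ are ``canonically isomorphic.'' It is true that $\Lambda_{r\theta}\cong\Lambda_\theta$ for $r>0$, since $\mathcal{I}_{r\theta}=r\mathcal{I}_\theta$ is a positive rescaling of the same valuation. But $\Lambda_{\om,r\theta}$ is completed with respect to $\mathcal{I}_\om+r\mathcal{I}_\theta$, and for $r_1\neq r_2$ the valuations $\mathcal{I}_\om+r_1\mathcal{I}_\theta$ and $\mathcal{I}_\om+r_2\mathcal{I}_\theta$ are not positive multiples of one another (unless $\mathcal{I}_\om$ vanishes, as it does for surfaces), so the completions are genuinely different rings. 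Since the theorem is stated for an arbitrary closed symplectic manifold, this is a real obstruction to the reduction you want to make, and it is one of the reasons L\^e--Ono's proof of Theorem~3.1 does not go via rescaling but via more delicate comparisons of filtered Novikov theories.
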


We note that the assertion of Theorem~3.1 in  \cite{van2020floer} is for the total (co)homologies, $\hfn^*$ and $\hn^*$. The same arguments used to prove that theorem show that the equality holds for a fixed degree of the homologies, when the manifold is symplectically Calabi--Yau, since the homotopy maps preserve the degree; see Section~\ref{sec:homotopy}.

As a consequence of these results, Lê and Ono proved the following lower bound for number of fixed points of $\psi$.

\begin{thm}[{\cite[Theorem 1.4]{van1995symplectic}}]\label{boundfix}
	Let $\psi_t$ be a symplectic isotopy on a closed symplectic manifold $(M,\om)$ such that $\psi_0 =id$ and $\psi_1=\psi$ is a nondegenerate symplectic diffeomorphism. Then, the number of contractible fixed points of $\psi$ is bounded from below by $\dim_{\Lambda_\theta}\hn(\theta)$, where $[\theta] = \mathrm{Flux}[\psi_t]$.
\end{thm}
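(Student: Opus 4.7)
The plan is to prove this as the standard chain-level Morse-type inequality: the rank of a free chain complex bounds the rank of its homology from above. Since the homology rank is pinned down by Theorem~\ref{hfnandhn}, the count of generators, i.e.\ of contractible fixed points, is bounded below by $\dim_{\Lambda_\theta}\hn(\theta)$.

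First, by the Deformation Lemma invoked in Section~\ref{sec:sympper}, I would replace $\psi_t$ by a symplectic isotopy (with the same time-one map and the same flux, up to composing with a loop of Hamiltonian diffeomorphisms, which changes nothing at the level of Floer--Novikov homology) for which $\omega(\cdot,X_t)=\theta+dH_t=\theta_t$ with $\theta$ representing $\mathrm{Flux}[\psi_t]$. Under this normalization the contractible fixed points of $\psi=\psi_1$ are in bijection with the finite set $\mathcal{P}_0(\theta_t)\subset\mathcal{P}(\theta_t)$ of contractible $1$-periodic orbits of $X_t$ (finiteness using the standing nondegeneracy assumption on $\psi_1$). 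Call this set $\{x_1,\dots,x_N\}$, where $N=\#\mathrm{Fix}_0(\psi)$.

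Second, I would analyze the chain module $\cfn_*(\theta_t):=\cfn_*^{<\infty}(\theta_t)$ as a module over $\Lambda_{\om,\theta}$. Choose, once and for all, a single lift $\overline{x}_i\in\widetilde{\mathcal{L}}\overline{M}^{\theta}$ of each $x_i$; the full set of critical points of the action functional $\mathcal{A}_{\psi_t}$ is then the disjoint union of the $\Gamma_{\theta,\omega}$-orbits $\Gamma_{\theta,\omega}\cdot\overline{x}_i$, and the covering action is free. The Novikov completion condition in the definition of $\cfn_*(\theta_t)$ is exactly the condition defining $\Lambda_{\om,\theta}$, so
\begin{equation*}
\cfn_*(\theta_t)\ \cong\ \bigoplus_{i=1}^{N}\Lambda_{\om,\theta}\cdot\overline{x}_i
\end{equation*}
as a $\Lambda_{\om,\theta}$-module; in particular it is free of rank $N$. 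Since $\Lambda_{\om,\theta}$ is a field (Novikov rings over a field completed along a real-valued homomorphism are fields), every subquotient is a free module and the inequality $\mathrm{rank}_{\Lambda_{\om,\theta}} H_*(C,\partial)\leq \mathrm{rank}_{\Lambda_{\om,\theta}} C$ holds verbatim. Applying this to the Floer boundary operator $\partial_J$ yields
\begin{equation*}
N\ =\ \mathrm{rank}_{\Lambda_{\om,\theta}}\cfn_*(\theta_t)\ \geq\ \mathrm{rank}_{\Lambda_{\om,\theta}}\hfn_*(\psi_t).
\end{equation*}

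Finally, I would invoke Theorem~\ref{hfnandhn}, together with the remark after it that the degreewise equality holds under the symplectically Calabi--Yau hypothesis (and that in the general case one sums over degrees), to obtain
\begin{equation*}
\mathrm{rank}_{\Lambda_{\om,\theta}}\hfn_*(\psi_t)\ =\ \dim_{\Lambda_\theta}\hn_*(\theta).
\end{equation*}
Combining the two displays gives the bound $\#\mathrm{Fix}_0(\psi)\geq \dim_{\Lambda_\theta}\hn(\theta)$, as claimed. The only delicate point in this plan is verifying that the $\Gamma_{\theta,\om}$-action on lifts is free and that the grouping of generators into $\Gamma_{\theta,\om}$-orbits is compatible with the Novikov finiteness condition, so that the chain module really is free of rank $N$; once this bookkeeping between $\mathcal{P}_0(\theta_t)$ and its lifts is set up, the rest is purely formal and reduces entirely to the input provided by Theorem~\ref{hfnandhn}.
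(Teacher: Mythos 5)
The paper does not prove this theorem: it is quoted directly from \cite[Theorem~1.4]{van1995symplectic}, so there is no in-paper proof to compare against. That said, the reduction you propose (free chain module of rank $N = \#\fix_0(\psi)$, homology rank bounded by chain rank, then Theorem~\ref{hfnandhn}) is a clean way to \emph{recover} the statement from the inputs the paper records, and it mirrors the way the paper itself silently uses these facts, e.g.\ in the chain of inequalities in the proof of Proposition~\ref{prop:nondegpseudotranslation}. One should note, though, that this inverts the historical logical order: Theorem~\ref{hfnandhn} is \cite[Theorem~3.1]{van2020floer}, published a quarter-century after \cite[Theorem~1.4]{van1995symplectic}; the original Lê--Ono argument cannot go through the later rank identity, so your route is genuinely different from theirs.

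There is also one concrete gap. Your key algebraic step rests on the claim that ``$\Lambda_{\om,\theta}$ is a field (Novikov rings over a field completed along a real-valued homomorphism are fields).'' That parenthetical is false in general. A Novikov completion of $\Q[\Ga]$ along $\nu\colon\Ga\to\R$ is a field exactly when $\nu$ is injective (so $\Ga$ embeds in $\R$ and is totally ordered); if $\ker\nu\neq 0$ the completion contains the full group ring $\Q[\ker\nu]$, which is not a field. Here the filtration used to define $\Lambda_{\om,\theta}$ is $\cI_\om+\cI_\theta$ on $\Ga_{\theta,\om}=\pi_1(\cL_0M)/(\ker\cI_\om\cap\ker\cI_\theta)$, and nothing forces $\cI_\om+\cI_\theta$ to be injective on this quotient. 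By contrast $\Lambda_\theta$ genuinely is a field, because $I_\theta$ is injective on $\Ga_\theta$ by construction. For surfaces with $\pi_2=0$ one has $\Ga_{\theta,\om}\cong\Ga_\theta$ and hence $\Lambda_{\om,\theta}\cong\Lambda_\theta$ (this is exactly what Corollary~\ref{rankforsurfaces} exploits), so in the setting where the paper actually applies the theorem your argument is correct. But the theorem is stated for arbitrary closed $(M,\om)$, and there you need a substitute for the ``field'' step: either a careful definition of $\mathrm{rank}_{\Lambda_{\om,\theta}}$ (for instance via a flat base change to $\Lambda_\theta$-coefficients, as Lê--Ono do) under which the Morse-type inequality $\mathrm{rank}\,H_*(C)\le\mathrm{rank}\,C$ is actually proved, or the original comparison-with-Morse--Novikov argument of \cite{van1995symplectic}.
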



For the particular case of closed orientable surfaces, we have the following.

\begin{cor}\label{rankforsurfaces}
	Let $\Sigma$ be a closed orientable surface of positive genus with area form $\om$ and $\psi_t \in \Symp_0(\Sigma,\om)$ a symplectic isotopy with non-trivial flux such that $\psi_0 = id$ and $\psi_1$ is a nondegenerate symplectic diffeomorphism. Then
	$$\dim_\Q \hfn_*(\psi_t) = \dim_\Q \hfn_0(\psi_t) = 2g-2.$$
	\begin{proof}
		First, we recall that $\pi_2(\Sigma) = 0$. In this case, $\pi_1(\mathcal{L}_0\Sigma) \cong \pi_1(\Sigma)$ and $\Gamma_{\theta,\om} \cong \Gamma_\theta$. Therefore, $\Lambda_{\om,\theta} \cong \Lambda_\theta$, for any $1$-form $\theta$. Then, Theorem \ref{hfnandhn} yields
		
		\begin{align}\label{dimsurface}
			\dim_\Q \hfn_k(\psi_t)=\mathrm{rank}_{\Lambda_{\om,\theta}} \hfn_k(\psi_t)= \dim_{\Lambda_\theta} \hn_{k+1}(\theta),
		\end{align}
		
		for $\theta \in \mathrm{Flux}[\psi_t]$ and for every $k \in \Z$. Furthermore, Corollary \ref{HNranksurfaces} yields that
		\begin{align}\label{hnsurface}
			\dim_{\Lambda_\theta} \hn_{k+1}(\theta) = \begin{cases} \vert\chi(\Sigma)\vert=2g-2, \ \text{if} \ k+1=1 \\ 0, \ \text{otherwise.} \end{cases}
		\end{align}
		Combining \eqref{dimsurface} and \eqref{hnsurface}, we obtain the desired equality.
	\end{proof}
\end{cor}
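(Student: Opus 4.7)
The plan is to chain together the two main computational tools already assembled in this preliminaries section: Theorem \ref{hfnandhn}, which relates the rank of $\hfn_k(\psi_t)$ over the Novikov ring $\Lambda_{\om,\theta}$ to the dimension of $\hn_{k+n}(\theta)$ over $\Lambda_\theta$, and Corollary \ref{HNranksurfaces}, which pins down the Morse--Novikov homology of a nonexact closed $1$-form on a surface of positive genus. Since $\dim\Sigma=2$, we have $n=1$, so the degree shift in Theorem \ref{hfnandhn} becomes $k\mapsto k+1$, and the two results assemble to prove the claim immediately once we reconcile the coefficient rings.

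The main preliminary I would verify first is that the coefficient ring $\Lambda_{\om,\theta}$ for Floer--Novikov homology simplifies to $\Lambda_\theta$. This relies on the fact that $\pi_2(\Sigma)=0$ when $g\geq 1$, which forces the symplectic area homomorphism $\mathcal{I}_\om\colon \pi_1(\cL_0\Sigma)\to\R$ to vanish on every loop of loops (any such map $T^2\to\Sigma$ is homotopic to a map into the $1$-skeleton). Combined with the fact that $e_*\colon \pi_1(\cL_0\Sigma)\to\pi_1(\Sigma)$ is an isomorphism for aspherical $\Sigma$, this yields $\ker\mathcal{I}_\om\cap\ker\mathcal{I}_\theta=\ker\mathcal{I}_\theta$, hence $\Gamma_{\theta,\om}\cong\Gamma_\theta$ and therefore $\Lambda_{\om,\theta}\cong\Lambda_\theta$. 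This identification also justifies writing $\dim_\Q$ in place of $\mathrm{rank}_{\Lambda_{\om,\theta}}$ as the latter really only has a meaning as a rank over the Novikov ring.

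With this in hand, I would plug $n=1$ into Theorem \ref{hfnandhn} to get
\begin{equation*}
\mathrm{rank}_{\Lambda_{\om,\theta}}\hfn_k(\psi_t)=\dim_{\Lambda_\theta}\hn_{k+1}(\theta),
\end{equation*}
where $\theta$ is a representative of the flux $\mathrm{Flux}[\psi_t]=[\theta]\neq 0$. Since $\theta$ is nonexact by hypothesis, Corollary \ref{HNranksurfaces} gives $\dim_{\Lambda_\theta}\hn_{k+1}(\theta)=2g-2$ when $k+1=1$ and $0$ otherwise. The corollary follows by reading off the case $k=0$ and noting all other degrees vanish.

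The only genuine subtlety is the ring-coefficient reconciliation in the second paragraph, which is why I would establish that first; the rest is a bookkeeping exercise in degree shifts. Everything else is a direct invocation of the previously quoted results, so I expect no further obstacle.
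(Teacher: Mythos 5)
Your proposal is correct and follows exactly the same route as the paper: use $\pi_2(\Sigma)=0$ to identify $\Lambda_{\om,\theta}$ with $\Lambda_\theta$, then combine Theorem \ref{hfnandhn} (with $n=1$) and Corollary \ref{HNranksurfaces}. The extra unpacking you give for $\mathcal{I}_\om\equiv 0$ (that any loop of contractible loops is, via $e_*$ being an isomorphism, homotopic to a loop of constant loops, whose torus sweeps zero area) is a helpful elaboration of what the paper asserts in one line, and the rest is the same bookkeeping.
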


When $(M^{2n},\om)$ is a closed symplectically Calabi--Yau symplectic manifold, we do not have such a nice equality. In Corollary \ref{rankforsurfaces}, we used the fact that, in the case of surfaces, we have $n=1$. More generally, we have the following. Let $\{\psi_t\} \subset \Symp_0(M,\om)$ be a symplectic isotopy beginning at the identity with a nondegenerate endpoint $\psi_1$ and non-zero flux. We denote by $\supp \hfn_*(\psi_t) \subset \Z$ the set of numbers $k \in \Z$ such that $\hfn_k(\psi_t) \neq 0$ and define, similarly, $\supp \hn_*(\theta)$. Proposition~\ref{vanishHN} and Theorem~\ref{hfnandhn} yield
$$\supp \hfn_{*+n}(\psi_t) = \supp \hn_{*}(\theta) \subset (0, 2n).$$
In particular, we obtain
\begin{equation}\label{suppforcy}
	\supp \hfn_*(\psi_t) \subset (-n,n).
\end{equation}

In the present context, let $\psi_t^k$ be the symplectic isotopy induced by the symplectic vector field $kX_t$ defined by $\om(\cdot,kX_t) = k\theta_t$, for $k>0$. Then, for any $k$ we have 
$$\mathrm{Flux}[\psi_t^k] = k[\theta] = k\mathrm{Flux}[\psi_t].$$
Moreover, for an integer $k$, we have $\psi_1^k = \psi^k = \psi \circ \cdots \circ \psi$, the $k$-th iteration of our symplectic diffeomorphism $\psi$.

\begin{prop}\label{prop:nondegpseudotranslation}
	Let $(M,\om)$ be a closed symplectically Calabi--Yau manifold and $\psi$  a nondegenerate symplectic diffeomorphism which is the end point of a symplectic isotopy $\psi_t$ such that $\psi_0 = id$ and $\mathrm{Flux}[\psi_t] = [\theta]$. Suppose that the number of periodic points of $\psi$ is given by $\dim_{\Lambda_\theta}\hn_*(\theta)$. Then, they are all contractible fixed points with respect to $\psi_t$ and $\Delta(x,\psi_t) = 0$ for all $x \in \mathrm{Fix}_0(\psi_t)$.
		
	\begin{proof}
		The fact that $\mathrm{Per}(\psi) = \mathrm{Fix}_0(\psi_t)$ follows from the lower bound given in Theorem \ref{boundfix}. For any integer $k \geq 1$ such that $\psi^k$ is again nondegenerate, we have:
		\begin{align*}
			\mathrm{rank}_{\Lambda_\theta}\hn_*(\theta) = \# \mathrm{Fix}(\psi^k) \geq \# \mathcal{P}(k\theta_t) \geq \mathrm{rank}_{\Lambda_{\omega,\theta}} \hfn_*(\psi^k_t) = \mathrm{rank}_{\Lambda_\theta} \hn_{*+n}(k\theta),
		\end{align*}
		where, for the first inequality, we regard $\mathcal{P}(k\theta_t)$ as a subset of $\mathrm{Fix}(\psi^k)$, the second inequality comes from the fact that $\cfn_*(\psi^k_t)$ is freely generated, over $\Lambda_{\omega,\theta},$ by the set $\mathcal{P}(k\theta_t)$, and the last equality follows from Theorem~\ref{hfnandhn}. In particular,
		$$\mathrm{rank}_{\Lambda_{\om,\theta}} \hfn_*(\psi^k_t) = \dim_{\Lambda_\theta}\hn_*(\theta)$$
		and every point in $\mathrm{Fix}(\psi^k)$ contributes non-trivially to the Floer--Novikov homology. Suppose by contradiction that $\psi$ admits a fixed point $x$ such that $\Delta(\psi_t, x) = m \neq 0$. From the iteration formula \eqref{iteformula} and property \eqref{reldeltacz}, we have $\vert k\Delta(\psi_t, x) - \cz(\psi_t^k, x^k) \vert \leq n$ and hence, $\cz(\psi_t^k, x^k)$ lies in the interval $(km-n, km+n)$ for any integer $k \geq 1$ such that $\psi^k$ is nondegenerate. Therefore, for $k > \vert 2n/m \vert$, we have $\cz(\psi_t^k, x^k)$ outside $(-n,n)$, which contradicts the fact that $\supp \hfn_*(\psi^k_t) \subset (-n,n)$, as seen in \eqref{suppforcy}.  
	\end{proof}
\end{prop}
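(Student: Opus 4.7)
My plan is to deduce both conclusions of the proposition by matching the lower bound of Theorem~\ref{boundfix} against the hypothesis, and then combining this with the Calabi--Yau constraint on the support of Floer--Novikov homology recorded in~\eqref{suppforcy}. First I would observe that the inclusions $\fix_0(\psi_t)\subseteq\fix(\psi)\subseteq\per(\psi)$, together with the lower bound $\#\fix_0(\psi_t)\geq\dim_{\Lambda_\theta}\hn_*(\theta)$ coming from Theorem~\ref{boundfix} and the hypothesis $\#\per(\psi)=\dim_{\Lambda_\theta}\hn_*(\theta)$, force all three sets to coincide. This yields the first assertion and, as a bonus, shows that $\fix(\psi^k)=\fix(\psi)$ for every positive integer $k$, since every periodic point of $\psi$ is already a fixed point.

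For the mean-index assertion I would iterate and exploit the rigidity produced by the matching of ranks. Pick an admissible positive integer $k$ for which $\psi^k$ is nondegenerate; such $k$ form a cofinite subset of $\bb{N}$ because admissibility only fails when a non-trivial $k$-th root of unity appears among eigenvalues of the finitely many $d\psi_x$. For such $k$, repeating the chain of inequalities used in the nondegenerate setting gives
$$\#\fix(\psi^k)\geq\#\mathcal{P}(k\theta_t)\geq\rank_{\Lambda_{\om,\theta}}\hfn_*(\psi^k_t)=\rank_{\Lambda_\theta}\hn_{*+n}(k\theta)=\dim_{\Lambda_\theta}\hn_*(\theta),$$
where the penultimate equality is Theorem~\ref{hfnandhn} and the last one is the conformal invariance $\dim_{\Lambda_{k\theta}}\hn_*(k\theta)=\dim_{\Lambda_\theta}\hn_*(\theta)$ noted after Definition~\ref{def:pseudotranslation}. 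Step one identifies the first count with the last, so all inequalities become equalities; in particular every iterated fixed point $x^k$ represents a non-trivial class in $\hfn_*(\psi^k_t)$ and hence $\cz(\psi^k_t,x^k)\in\supp\hfn_*(\psi^k_t)\subset(-n,n)$ by~\eqref{suppforcy}.

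The final step is a linear-growth argument. If some $x\in\fix_0(\psi_t)$ had $\Delta(\psi_t,x)=m\neq 0$, then the iteration formula~\eqref{iteformula} would give $\Delta(\psi^k_t,x^k)=km$, while property~\eqref{reldeltacz} would force $\cz(\psi^k_t,x^k)\in(km-n,km+n)$. For any admissible $k$ exceeding $2n/|m|$ this window is disjoint from $(-n,n)$, contradicting the previous paragraph. The subtle point I expect to require the most care is the equality $\#\mathcal{P}(k\theta_t)=\rank_{\Lambda_{\om,\theta}}\hfn_*(\psi^k_t)$: a priori one only has the rank inequality from the fact that $\cfn_*(\psi^k_t)$ is freely generated by $\mathcal{P}(k\theta_t)$, and upgrading it to an equality relies on sandwiching against the upper bound $\#\fix(\psi^k)=\dim_{\Lambda_\theta}\hn_*(\theta)$ obtained in step one.
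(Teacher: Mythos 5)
Your proof is correct and follows essentially the same path as the paper's: use Theorem~\ref{boundfix} to collapse $\fix_0(\psi_t)\subseteq\fix(\psi)\subseteq\per(\psi)$ to a single set of the right cardinality, run the chain $\#\fix(\psi^k)\geq\#\mathcal{P}(k\theta_t)\geq\rank\hfn_*(\psi^k_t)=\rank\hn_{*+n}(k\theta)$ to force all fixed points to contribute to $\hfn_*(\psi^k_t)$, and then derive a contradiction from the linear growth of the mean index against the constraint $\supp\hfn_*(\psi^k_t)\subset(-n,n)$. Your writeup is slightly more explicit than the paper's on two points — spelling out why $\fix(\psi^k)=\fix(\psi)$ and invoking the scale-invariance $\dim_{\Lambda_{k\theta}}\hn_*(k\theta)=\dim_{\Lambda_\theta}\hn_*(\theta)$ — but these are details the paper leaves implicit rather than genuine differences in method.
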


The discussion above extends to all symplectic diffeomorphisms as follows. For simplicity, suppose that $(M,\om)$ is \emph{rational} -- there is a constant $\lambda_0 \geq 0$ such that $\langle [\om], \pi_2(M) \rangle = \lambda_0 \Z$ -- and the periods $I_\theta(\pi_1(M))$ of $\theta$ are discrete. In this case, $\lambda_0$ is called the \emph{rationality constant} of $(M,\om)$. Let $\psi_t$ be a symplectic isotopy such that $\psi_0 = id$ and $\psi_1=\psi$. For $a,b \notin \mathcal{S}(\theta_t)$, we define
$$\hfn_*^{(a,b)}(\theta_t) := \hfn_*^{(a,b)}(\theta_t^\prime),$$
where $\psi_t^\prime$ is a small nondegenerate perturbation of $\psi_t$ such that $\om(\cdot, X_t^\prime) = \theta_t^\prime$ and $\mathrm{Flux}[\psi_t] = \mathrm{Flux}[\psi_t^\prime]$. Equivalently, $\psi_t^\prime = \psi_t \circ \varphi_t^H$, for some small Hamiltonian isotopy $\{\varphi^H_t\} \subset \Ham(M,\om)$. When $(M,\om)$ is not rational or $I_\theta(\pi_1(M))$ is dense in $\R$, $\hfn_*^{(a,b)}(\theta_t)$ can be defined as the limit over a certain class of Hamiltonian perturbations of $\psi_t$, see \cite[Remark 2.3]{ginzburg2009action} and \cite[Section 2.3]{hein2012conley}.

\begin{remark}\label{rmk=ham}
	When $\psi \in \Symp_0(M,\omega)$ is a Hamiltonian diffeomorphism, we have $[\theta] = 0$, i.e., $\theta_t = dH_t$ for some $1$-periodic Hamiltonian $H \colon S^1 \times M \to \R$, with $H(t,\cdot) = H_t$. In this case, $\overline{M}^\theta = M$, the action functional $\mathcal{A}_{\psi_t}$ agrees with the Hamiltonian action functional, and the construction above coincides with the standard Hamiltonian Floer homology setting, in particular, $\hfn_k^{(a,b)}(\theta_t) =\hf_k^{(a,b)}(H)$, for any $-\infty \leq a<b\leq \infty$.
\end{remark}

\subsection{Homotopy maps}
\label{sec:homotopy}
The fact that the Floer--Novikov homology of $\psi_t$ is independent of $\theta_t$ (but depends on the Flux $[\theta]$ of $\psi_t$) follows from a more general result concerning the construction of the often called \emph{homotopy maps} that we recall now.

Let $\psi_{s,t}$ be a family of symplectic isotopies starting at the identity, i.e., $\psi_{s,0} = id$, smoothly parametrized by $s \in [0,1]$, and such that $\mathrm{Flux}[\psi_{s,t}] = [\theta]$, for all $s$. Suppose that $\psi_{0,1}$ and $\psi_{1,1}$ are nondegenerate symplectic diffeomorphisms. Using again the Deformation Lemma \cite[Lemma~2.1]{van1995symplectic}, we can write
$$\theta_{s,t}:= \om(\cdot, X_{s,t}) = \theta + dH_{s,t},$$
where $X_{s,t}$ is the symplectic vector field generating the isotopy $\psi_{s,t}$, for each $s \in [0,1]$. It turns out that one can construct a chain map between the chain complexes defining $\hfn_*(\psi_{0,1})$ and $\hfn_*(\psi_{1,1})$ using the family $\psi_{s,t}$. In fact, let
$$c \geq \int_{-\infty}^\infty \int_{S_1} \max_M \frac{\partial}{\partial s} H_{s,t} \ dt \ ds.$$
Following the proof of \cite[Theorem 4.3]{van1995symplectic}, one defines a chain map
\begin{equation}\label{homotopymap}
	\Psi_{H_0,H_1} \colon \cfn^{(a,b)}_k(\theta_{0,1}) \to \cfn^{(a+c,b+c)}_k(\theta_{1,1})
\end{equation}
counting the rigid finite energy solutions $\nu \colon \R \times S^1 \to M$ of the equation
\begin{equation}\label{paramfloereq}
	\frac{\partial}{\partial s} \nu(s,t)+J_{s,t}\left(\frac{\partial}{\partial t} \nu(s,t) - X_{s,t}(\nu(s,t))\right)=0,
\end{equation}
satisfying $\lim_{\tau \to \pm \infty} \nu(\tau,t) = x_{\pm}(t)$, for $x_- \in \mathcal{P}(\theta_{0,t})$ and $x_+ \in \mathcal{P}(\theta_{1,t})$ such that $\cz(\psi_t,x_+) = \cz(\psi_t,x_-)$. Here $J_{s,t}$ denotes a $s$-dependent family of almost complex structures compatible with $\om$. It is straightforward to check that for the energy $E(\nu)$ defined in \eqref{energydef}, we have 
\begin{align*}
	E(\nu) - (\mathcal{A}_{\psi_{0,t}}(\overline{x}_-) - \mathcal{A}_{\psi_{1,t}}(\overline{x}_+)) &= \int_{-\infty}^\infty \int_{S_1} \frac{\partial}{\partial s} H_{s,t} dt ds \\ &\leq \int_{-\infty}^\infty \int_{S_1} \max_M \frac{\partial}{\partial s} H_{s,t} dt ds,
\end{align*}
for a solution $\nu$ of \eqref{paramfloereq}, where $\overline{x}_-=[x_-,u_-]=[x_+,u_+\#\nu]=\overline{x}_+$.

The chain map \eqref{homotopymap} descends to homology and we denote the obtained map again by $\Psi_{H_0,H_1}$. This map is an isomorphism in the case of total homologies $\hfn_k(\psi_{0,1})$ and $\hfn_k(\psi_{1,1})$, but, in general, the filtered homologies $\hfn^{(a,b)}_k(\theta_{0,1})$ and $\hfn^{(a+c,b+c)}_k(\theta_{1,1})$ are not isomorphic. In particular, if $c>b-a$, one has $\Psi = 0$. Moreover, if the homotopy is monotone decreasing homotopy, i.e., $(\partial/\partial s) H_{s,t} \leq 0$, we can take $c=0$ and, hence, $\Psi_{H_0,H_1}$ preserves the action filtration.

Furthermore, any homotopy $H_{s,t}$ from $H_{0,t}$ to $H_{1,t}$ such that $a,b$ are not in $\mathcal{S}(\theta_{s,t}),$ for all $s$, induces a natural isomorphism (in general, different from $\Psi$) between the groups $\hfn_k^{(a,b)}(\theta_{s,t})$ and, thus,
\begin{equation}\label{homotiso}
	\hfn_k^{(a,b)}(\theta_{0,t}) \cong \hfn_k^{(a,b)}(\theta_{1,t});
\end{equation}
see \cite[Proposition 1.1]{viterbo1999functors} and also \cite[Section 4.4]{biranpolsal}. These isomorphisms commute with the maps in the long exact sequence \eqref{leshfn} when the real numbers $a<b<c$ are outside $\mathcal{S}(\theta_{s,t})$.

As in \cite[Example 2.5]{ginzburg2009action}, the family $\theta_{s,t}$ is called \emph{isospectral} if the action spectrum $\mathcal{S}(\theta_{s,t})$ is independent of $s$. In the case of an isospectral family, isomorphism \eqref{homotiso} holds for any $a,b\notin \mathcal{S}(\theta_{s,t})$ with $a<b$. It follows from the construction of this isomorphism that if $K$ is another periodic Hamiltonian such that $K_t \geq H_{s,t}$, for all $s$, then we have the following commutative diagram
\begin{equation}\label{comdiag}
	\begin{tikzcd}
		\hfn_k^{(a,b)}(\theta + dK_t) \arrow[rd,"\Psi_{K,H_1}"] \arrow[d,"\Psi_{K,H_0}"] \\ \hfn_k^{(a,b)}(\theta_{0,t}) \arrow[r,"\sim"] & \hfn_k^{(a,b)}(\theta_{1,t}),
	\end{tikzcd}
\end{equation}
where the maps $\Psi_{K,H_i}$ are the homotopy maps induced by monotone \emph{linear homotopies} between $K$ and $H_0$, and between $K$ and $H_1$, that is, homotopies of the form $K^i_{s,t}=~(1-~s)K_t + s H_{i,t}$, for $i=0,1$ and $s \in [0,1]$.

\subsection{Local Floer Homology}
\label{sec:localFH}
We review the definition of local Floer homology of a fixed isolated $1$-periodic orbit. As before, let $\psi \in \Symp_0(M,\om)$ be a (possibly degenerate) symplectic diffeomorphism, $\psi_t$ a symplectic isotopy such that $\psi_0 = id$ and $\psi_1 = \psi$ and $X_t$ the symplectic vector field such that
$$\theta_t := \om(\cdot, X_t) = \theta + dH_t,$$
where $\mathrm{Flux}[\psi_t] = [\theta]$. Given an isolated $1$-periodic orbit $x \in \mathcal{P}(\theta_t)$, consider a small tubular neighborhood $U\subset M$ of $x$ (more precisely, $U$ is an isolating neighborhood of $S^1 \times \{x(t)\}$ in $S^1 \times M$ such the closure does not intersect $S^1 \times \{y(t)\}$ for any other orbit $y \in \mathcal{P}(\theta_t)$) and let $\psi^\prime_t$ be a nondegenerate $C^1$-small perturbation of $\psi_t$ with $\mathrm{Flux}[\psi_t^\prime] = [\theta]$ and supported in $U$. We can choose the neighborhood $U$ and the perturbation $\psi_t^\prime$ such that the solutions of the Floer equation between periodic orbits contained in $U$ for $\psi_t^\prime$ are themselves also contained in $U$. By compactness and gluing results, the same holds for \emph{broken} solutions.

In particular, this allows one to define a chain complex, as described in Section~\ref{hfnsection}, considering only the generators contained in $U$. We denote the resulting homology, which is independent of the perturbation, by $\hf_*^{\loc}(\psi_t,\overline{x})$. This notion goes back to Floer's original works \cite{Floer3,floer1989witten}; see also \cite[Section 3.2]{ginzburg2010local}.

Since we are assuming $M$ to be symplectically Calabi--Yau, this homology is also $\Z$-graded by the Conley--Zehnder index in the nondegenerate case. When $x$ is a nondegenerate periodic orbit, it follows from the definition of the local Floer homology that
$$\hf_k^{\loc}(\psi_t,\overline{x}) = \begin{cases} \Q, \ \text{if} \ k = \cz(\psi_t,x) \\ 0, \ \text{otherwise}. \end{cases}$$
Moreover, the relation between the mean index $\Delta(\psi_t,x)$ and the Conley--Zehnder index $\cz(\psi_t,x)$ in \eqref{reldeltacz} yields
\begin{equation}\label{supphfloc}
	\mathrm{supp} \hf^{\loc}_*(\psi_t,\overline{x}) \subset [\Delta(x,\psi_t) - n, \Delta(x,\psi_t) + n],
\end{equation}
even in the case where $x$ is degenerate.

\subsection{A canonical Floer-Novikov complex}
\label{sec:canonical}
Recall the assumption $(M,\om)$ is rational and symplectically Calabi-Yau. Let $\psi_t$ be a symplectic path with flux $\theta$ and a possibly degenerate endpoint $\psi=\psi_1$ with finitely many points. For each $x\in\fix_0(\psi_t)$, we fix a choice of capping $\ol{x}$ and an isolating neighborhood $U_x$ (as in Section \ref{sec:localFH}). Furthermore, let $\psi'_t$ be a sufficiently close Hamiltonian perturbation of $\psi_t$ such that $\psi'_1$ coincides with $\psi$ away from the isolating neighborhoods. Consider the universal Novikov ring
\begin{equation*}
	\Lambda =\Big\{\sum a_iT^{\lambda_i}\,|\,a_i\in\Q, \lambda_i\in\R,\lambda_i\rightarrow+\infty\Big\},
\end{equation*}
and its subring $\Lambda^0\subset\Lambda$ given by elements in $\Lambda$ satisying $\lambda_i\geq0$ for all $i$. Moreover, we denote by $\Lambda^0_{\om,\theta}$ the subring of $\Lambda_{\om,\theta}$ given by its elements satisying $(\cI_{\theta}+\cI_{\om})(g_i)\geq0$ $\forall i$. Note that the map $$g_i\in\Lambda^{0}_{\om,\theta}\longmapsto T^{(\cI_\om+\cI_\theta)(g_i)}\in\Lambda^{0}$$ gives a natural embedding. In this section, we denote by $$\cfn(\psi'_t,\Lambda_{\om,\theta})\quad\text{and}\quad\cfn(\psi'_t,\Lambda^0_{\om,\theta})$$ the Floer--Novikov complex defined in Section \ref{hfnsection} and the analogous $\Lambda^0_{\om,\theta}$ version, respectively. We follow the construction in \cite[Section 4.4.7]{shelukhin2022hofer}, which was upgraded to the semi-positive setting in \cite[Section4]{sugimoto2021hofer}, to obtain a canonical $\Lambda^0$-complex for which the properties we use are listed in the following proposition.

\begin{prop}
\label{prop:canonical}
Let $(M,\om)$ be a closed rational symplectically Calabi-Yau symplectic manifold. There is a homotopically canonical $\Lambda^0$-complex $\cfn(\psi_t,\Lambda^0)$ that satisfies the following properties:
	\begin{enumerate}
		\item As a graded $\Lambda^0$-module \[\cfn_*(\psi_t,\Lambda^0)=\bigoplus_{x\in\fix_0(\psi_t)}\hf^{\loc}_*(\psi_t,\ol{x})\otimes\Lambda^0.\]
		\item Its differential ${d}$ is defined over $\Lambda^0$. Moreover, if $d$ is non-trivial then there exists at least one integer $k$ such that $\cfn_k(\psi_t,\Lambda^0)$ and $\cfn_{k+1}(\psi_t,\Lambda^0)$ are non-trivial.
		\item The homology $\hfn(\psi_t,\Lambda)$ of \[\cfn_*(\psi_t,\Lambda):=\cfn_*(\psi_t,\Lambda^0)\otimes_{\Lambda^0}\Lambda\] is isomorphic to $\hfn(\psi'_t,\Lambda) = H(\cfn(\psi'_t,\Lambda_{\om,\theta})\otimes_{\Lambda_{\om,\theta}}\Lambda,\partial)$.
	\end{enumerate}
\end{prop}

\begin{proof}
The construction of the canonical complex in \cite[Section4]{sugimoto2021hofer} is for Hamiltonian diffeomorphisms in the semipositive setting where, just as in our case, the action spectrum is possibly dense in $\R$. One difference is that, since we are in the symplectically Calabi-Yau setting, the modules in the statement of Proposition \ref{prop:canonical} have a well-defined grading. To guarantee the same construction goes through in our setting, it is enough to show that the Floer--Novikov differential  $\partial$ of $$\cfn(\psi'_t,\Lambda^0)=\cfn(\psi'_t,\Lambda^0_{\om,\theta})\otimes_{\Lambda^0_{\om,\theta}}\Lambda^0$$ decomposes as $$\partial= d_{\loc,\psi'_t} + D,$$ where $d_{\loc,\psi'_t}$ is the direct sum of differentials in the local Floer complexes $\cf^{\loc}(\psi'_t,\ol{x})$ taken with $\Lambda^0$-coefficients and $D$ shifts action down by at least an $\epsilon>0$ depending only on $\psi_t$ and not on the choice of a sufficiently close perturbation $\psi'_t$. 

We now show that this is the case by finding an $\epsilon>0$, depending only on $\psi_t$, which bounds from below the energy of a Floer trajectory $v$, counted by $\partial$ on $\cfn(\psi'_t, \Lambda^0)$, between a capped orbit $\ol{x}'\in\cO(\psi'_t,\ol{x})$ and $\ol{y}'\in\cO(\psi'_t,\ol{y})$. Here, we include the possibility that $\ol{x}=[x,u]$ and $\ol{y}=[x,w]$ are different cappings of the same orbit  $x=y$. In case $x\neq y$, $E(v)$ is bounded below by the crossing energy that can be estimated as in \cite[Lemma~3.5]{van1995symplectic}. In fact, it is possible to find an $\eps>0$ such that if $v$ escapes any isolating neighborhood, then $E(v)>\eps$ by a standard Gromov compactness argument. So it remains to deal with the case $\ol{x}=[x,u]$ and $\ol{y}=[x,w]$ are different cappings of the same orbit  $x=y$ and $v$ remains inside $U_x$, connects $\ol{x}'$ and $\ol{y}'$, and satisfies $E(v)<\eps$. The cappings not being equivalent implies that either:
\begin{equation}\
\label{eq:equivalent-cap}
	\int_{u\#(-w)}\om =m\lambda_0 \neq 0\quad\text{or}\quad\int_{l_u\#(-l_w)}\theta\neq 0,
\end{equation}
or, possibly, both. Here $\lambda_0$ denotes the rationality constant of $(M,\om)$ and $m$ is an integer. Let $u',w'$ be the cappings inherited from $u,w$, that is, concatenations of $u$ and $w$ with cylinders $C_x,C_y$, contained in $U_x$ and with small symplectic area compared to $\lambda_0$, connecting $x$ to $x'$ and $y'$, respectively. The condition that $v$ is counted by $\partial$, implies that $[y',u'\#v]=[y',w']$ and, hence,
 \begin{equation*}
	m\lambda_0 + \int_{C_x\#v\#(-C_y)}\om=0 \quad\text{and}\quad \int_{l_{u'}\#l_{v}\#(-l_{w'})}\theta=0.
\end{equation*}
The first equality implies that $m=0$ since the second term on the right-hand side is small compared to $\la_0$. Therefore, the second assertion of \eqref{eq:equivalent-cap} must hold, which is a contradiction to the fact that the cappings are not equivalent. Indeed, the loop
\begin{equation*}
	{l_{u'}\#l_{v}\#(-l_{w'})} = {l_{u}\# l_{C_x}\#l_{v}\#(-l_{C_y})\#(-l_{w})}
\end{equation*}
is homotopic to ${l_{u}\#(-l_{w})}$ since the loop $l_{C_x}\#l_{v}\#(-l_{C_y})$ is clearly contractible in $M$. Thus, the differential decomposes as desired and the construction proceeds verbatim as in 
\cite[Section4]{sugimoto2021hofer}. 

That such a construction of a canonical complex is possible automatically implies parts $(1)$, $(3)$, and the first part of $(2)$. It remains to prove the second part of $(2)$ above. This follows from the fact that the differential $d$, obtained in the basic perturbation lemma of \cite{Markl}, factors through $D$. Therefore, if $d$ is not trivial, so is $D$. Since we have a well-defined grading and $D$ is of degree $1$, there must be at least two non-trivial elements of $\cfn(\psi_t,\Lambda^0)$ of consecutive degrees.
\end{proof}

Now we are ready to prove Proposition~\ref{prop:pseudotranslation} which generalizes Proposition~\ref{prop:nondegpseudotranslation}.

\begin{proof}[Proof of Proposition \ref{prop:pseudotranslation}]
Suppose $\psi$ is a pseudotranslation. Let $\psi_t$ and the sequence $k_j$ be as in Definition \ref{def:pseudotranslation}. Note that the condition of being a pseudotranslation, and part $(3)$ of Proposition \ref{prop:canonical} imply that the differential $d$ of the canonical complex $\cfn(\psi_t^{k_j},\Lambda)$ is trivial for all $j$. On the other hand, the fact that $(M,\om)$ is symplectically Calabi-Yau, implies that degrees where $\hfn_*(\psi_t^{k_j},\Lambda)$ is non-trivial is supported in a compact interval $I\subset \R$. If there exists $x$ such that $\Delta(\psi_t,x)\neq0$ and $\hf^{\loc}(\psi_t,x)\neq0$, then for a sufficiently large $j$ we have that $k_j\Delta(\psi_t,x)$ will belong to the complement of $I$. In particular, the support of $\hf^{\loc}(\psi^{k_j}_t,x^{k_j})$ is disjoint from $I$. This is in contradiction to the fact that the differential is trivial. 
\end{proof}

\subsection{Blowing-up and gluing}\label{sec:blow-up_and_gluing}
Let $\psi_t$ be a symplectic isotopy of $(\Sigma,\om)$ which is based at the identity and whose endpoint $\psi$ has isolated fixed points. Up to composing by a Hamiltonian loop, we may suppose that all of the contractible fixed points are constant. Fix a subset $E$ of $\fix_0(\psi_t)$. The purpose of this section is twofold. First, to define a symplectic isotopy $\psi^{+}_{t}$ on a surface $\Sigma_+$ whose boundary components are obtained by ``blowing-up" $\Sigma$ at each point in $E$ such that $\psi^{+}_{t}$ is conjugate to $\psi_t$ in the interior of $\Sigma_+$ and the dynamics on each boundary component is determined by $D\psi$. We similarly define $\psi^{-}_t$ on $\Sigma_{-}$ by first conjugating $\psi_t$ by the anti-symplectic involution $\tau:\Sigma\rightarrow\Sigma$, locally modelled on $(x,y)\mapsto(-x,y)$, and \emph{blowing-up} $\Sigma$ at each point in $\tau(E)$. Secondly, we glue $\Sigma_{+}$ and $\Sigma_{-}$ along the boundary components corresponding to the same point in $E$ to obtain a new surface $\Sigma'$. The symplectic isotopy $\psi_t'$ defined by $\psi_t^{\pm}$ on $\Sigma_{\pm}\subset\Sigma^{'}$ has $C^1$-regularity. Similar ideas have played an important role in the study of Hamiltonian dynamics on surfaces; see \cite{arnol2013mathematical, franks1990periodic, collier2012symplectic} for a few examples.

\subsubsection{Blowing-up}\label{sec:blow-up}
Fix $\delta>0$ such that $B_{r}(z)$ is a Darboux ball for all $r<2\delta$ and $z\in E$, and such that $B_{r}(z)\cap\fix(\psi)=\{z\}$. It is possible to find a symplectic diffeomorphism between the punctured surface $\Sigma\setminus E$ and the interior of a surface $\Sigma_{+}$ whose boundary components correspond to the punctures. Indeed, on $B_{\delta}(z)$, the identification is given by the  symplectic diffeomorphism between the punctured disc $(B_{\delta}(0)\setminus\{0\}, rdr\wedge d\theta)$ and the open cylinder $((0,\delta^2/2)\times\R/2\pi\Z, d\rho\wedge d\theta)$ taking $(r,\theta)$ to $(\rho=r^2/2,\theta)$. The annuli $B_{\delta\leq r\leq 2\delta}(z)$, $z\in E$, are used to ``interpolate" between the two symplectic structures. To this end, let \[f: [\delta,2\delta]\rightarrow[\delta^2/2,2\delta]\] be a diffeomorphism satisfying the following:
\begin{enumerate}
	\item There exists a positive $\epsilon\ll\delta$ such that $f(x)=x^2/2$ on $[\delta,\delta+\epsilon]$ and $f(x)=x$ on $[2\delta-\epsilon,2\delta]$.
	\item The derivative $f'(x)$ is positive for all $x\in[\delta,2\delta]$.
\end{enumerate}
Note that the second condition when added to the fact that $f$ is positive implies that the product $h(x):=f^{-1}(x)(f^{-1})'(x)$ is positive for all $x\in[\delta^2/2,2\delta]$. In particular, we have the following symplectic diffeomorphism  
\begin{align*}
	(B_{\delta\leq r\leq 2\delta}(0), rdr\wedge d\theta)&\longrightarrow([\delta^2/2,2\delta]\times\R/2\pi\Z, h(\rho)d\rho\wedge d\theta)\\
	(r,\theta)&\longmapsto(\rho=f(r),\theta).
\end{align*}
We obtain a symplectic diffeomorphism that identifies each punctured disk $B_{2\delta}(z)\setminus\{z\}$ with a smoothed out cylindrical end and is the identity in the complement of $\cup_{z\in E}B_{2\delta}(z)$. We denote by $\Sigma_{+}$ the closure of the resulting surface. 

In the interior of $\Sigma_+$ we define $\psi_t^{+}$ by conjugating the restriction of $\psi_t$ to $\Sigma\setminus E$ by the symplectic diffeomorphism detailed in the previous paragraph. On the boundary components, which are identified with $\{2\delta\}\times\R/2\pi\Z$, we define \[\psi_t^{+}(2\delta,\theta)=\lim_{\rho\rightarrow 2\delta}\psi_t^{+}(\rho,\theta).\]
Under the identification of $\{2\delta\}\times\R/2\pi\Z$ with the unit vectors in $T_{z}\Sigma=T_{0}\R^2$ the restriction of $\psi_t^{+}$ is given by $(D\psi_t)_{z}/||(D\psi_t)_{z}||$.

The surface $\Sigma_{-}$ is the \emph{reflected} copy of $\Sigma_{+}$. It is obtained by ``blowing-up" $\Sigma$ at each $z\in\tau(E)$. We define $\psi_t^{-}$ just as in the previous case with the only difference being that we consider $\tau\circ\psi_t\circ\tau$ in place of $\psi_t$.  

\subsubsection{Gluing}\label{sec:gluing}
The objective of this section is to induce a symplectic isotopy $\psi_t'$ on a closed orientable surface $\Sigma'$, constructed by \emph{gluing} $\Sigma_{+}$ to $\Sigma_{-}$, in a way that extends the symplectic isotopies $\psi_t^{+}$ and $\psi_t^{-}$. The surface $\Sigma'$ is obtained by identifying, for all $z\in E$, the connected component of $\partial\Sigma_{+}$ corresponding to $z$ with the connected component of $\partial\Sigma_{-}$ corresponding to $\tau(z)$. 

Since the problem of gluing is local and $\psi_t$ is Hamiltonian on a neighborhood of each $z\in\ E$, we work out the details for a Hamiltonian isotopy $\phi_t$ of $(\R^2,dx\wedge dy)$ which fixes the origin and is generated by $H$. Without loss of generality, we may suppose that $H_t(0)=0$ and $\partial H_t/\partial t(0)=0$ for all $t$. In this case, the \emph{blowing-up} procedure described in Section \ref{sec:blow-up} is simpler; we identify $\R^2\setminus\{0\}$ with $\R_{>0}\times\R/2\pi\Z$ by mapping $
(r,\theta)$ to $(\rho=r^2/2,\theta)$ and obtain $\phi_t^{+}$ and $\phi_t^{-}$ on $C_{\pm}:=\R_{\geq0}\times\R/2\pi\Z$ as before. Note that since $\tau(x,y)=(-x,y)$, locally, $\tau\circ\phi_t\circ\tau$ is generated by $-H_t(-x,y)$. 

Consider an infinite cylinder $C=\R\times\R/2\pi\Z$. Map $C_{+}$ to $C$ by the natural inclusion and $C_{-}$ to $C$ by $\mu:(\rho,\theta)\mapsto(-\rho,-\theta)$. Define the function:
\begin{align*}
	F_t(\rho,\theta) = 
	\begin{cases} 
		H_{t}(\sqrt{2\rho}\cos\theta,\sqrt{2\rho}\sin\theta),&\rho>0 \\ 
		-H_{t}(-\sqrt{-2\rho}\cos\theta,-\sqrt{-2\rho}\sin\theta),&\rho<0\\ 
		0, &\rho=0,
	\end{cases}
\end{align*}
which has $C^1$-regularity and induces a $C^1$-symplectic isotopy $\phi'_t$ that coincides with $\phi_t^{+}$ on $C_+$ and with $\mu\circ\phi_t^{-}\circ\mu$ on $\mu(C_{-})$.

\begin{remark}
	We note that $C^{1}$-regularity is the most one can obtain in general when using this method of blowing-up and gluing. For example, if $H(x,y)=x(x^2+y^2)$ then $F(\rho,\theta)=(2\rho)^{3/2}\cos\theta$ for all $\rho>0$. In particular, the second partial derivative $\partial_{\rho}^{2}F(\rho,\theta)$ diverges as $\rho$ approaches $0$ from the right.
\end{remark}

\section{Symplectically degenerate extrema and periodic orbits}
In this section, we describe how the presence of a symplectically degenerate extremum implies the existence of infinitely many periodic orbits of a symplectic diffeomorphism $\psi \in \Symp_0(M,\om)$. We follow closely the ideas of Ginzburg in the proof of the Conley Conjecture \cite{ginzburg2010conley} and its refinements by Ginzburg and G{\"u}rel \cite{ginzburg2009action}, which rely on Hamiltonian Floer homology, and we adapt the arguments to nonzero flux symplectic diffeomorphisms using Floer--Novikov homology. The presence of a symplectic degenerate maximum (SDM) - see Definition~\ref{defn:SDM} - yields the following nontriviality at the homology level.

\begin{thm}[cf. {\cite[Theorem 1.17]{ginzburg2009action}}]\label{thm:SDMHFN}
	Assume that $(M^{2n},\om)$ is a closed rational symplectically Calabi--Yau manifold and let $\overline{x}$ be a symplectic degenerate maximum of $\psi_t$. Set $c = \mathcal{A}_{\psi_t}(\overline{x})$. Then for every sufficiently small $\varepsilon>0$, there exists $k_\varepsilon$ such that
	\begin{equation}
		\hfn_{n+1}^{(kc+\delta_k,kc+\varepsilon)}(k\theta_t)\neq 0 \ \text{for all $k> k_\varepsilon$ and some $\delta_k \in (0,\varepsilon)$.}
	\end{equation}
\end{thm}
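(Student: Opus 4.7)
The plan is to adapt the symplectically degenerate maximum argument of Ginzburg--G{\"u}rel \cite[Theorem~1.17]{ginzburg2009action} from the Hamiltonian setting to the Floer--Novikov setting. In a Darboux chart centered at $x$, the isotopy $\psi_t$ is Hamiltonian, generated by a local primitive $F_t$ of $\theta_t$ with $F_t(x)=0$. The SDM hypothesis supplies $\Delta(\psi_t,x)=0$, non-vanishing of $\hf^{\loc}_{n+1}(\psi_t^k,\overline{x}^k)$ for every $k$, and iterated action $\mathcal{A}_{\psi_t^k}(\overline{x}^k)=kc$; estimate \eqref{supphfloc} confines $\supp\hf^{\loc}_*(\psi_t^k,\overline{x}^k)\subset[-n,n]$ uniformly in $k$.

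The central construction is a local sandwich. For each small $\varepsilon>0$ the SDM condition produces autonomous germs $f_-\leq F_t\leq f_+$ at $x$, each with $x$ as an isolated global maximum and $f_\pm(x)=0$, whose $C^0$-norm on a small neighborhood of $x$ can be made arbitrarily small relative to $\varepsilon$. Extending by $F_t$ outside this neighborhood gives global symplectic isotopies $\psi_t^\pm$ of flux $[\theta]$ whose $k$-fold iterates carry a non-trivial class of degree $n+1$ supported in an arbitrarily narrow action window $(-\eta_k,\eta_k)$ around $0$, detected by the Morse--Bott model of a degenerate top critical point.

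Next I would invoke the monotone homotopy machinery of Section~\ref{sec:homotopy}. Applying diagram~\eqref{comdiag} to the linear interpolations $kf_-\to kF_t\to kf_+$ yields a factorization
\begin{align*}
	\hfn^{(-\eta_k,\eta_k)}_{n+1}(k\theta+d(kf_-)_t) &\longrightarrow \hfn^{(kc+\delta_k,\,kc+\varepsilon)}_{n+1}(k\theta_t) \\
	&\longrightarrow \hfn^{(-\eta_k',\eta_k')}_{n+1}(k\theta+d(kf_+)_t)
\end{align*}
of the direct continuation map $kf_-\to kf_+$, which is an isomorphism on the class of the maximum once the parameters are chosen appropriately. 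The action shift $\delta_k\in(0,\varepsilon)$ arises from the energy estimate $\int\int(\partial/\partial s)H_{s,t}$ from Section~\ref{sec:homotopy} and is bounded by the $C^0$-size of $F_t-f_\pm$, so it can be kept in $(0,\varepsilon)$ for every $k$ exceeding some $k_\varepsilon$. Non-triviality of the composition then forces non-triviality of the middle group.

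The main obstacle is checking that the Hamiltonian sandwich transfers cleanly to non-zero flux. This reduces to two observations: the SDM condition is purely local, so near $x$ one works with the Hamiltonian primitive $F_t$ and $\theta$ plays no role in the squeezing argument; and the action-shift estimates of Section~\ref{sec:homotopy} depend only on the Hamiltonian parts of the interpolation. Rationality of $(M,\om)$ ensures that $\hfn^{(a,b)}_*(k\theta_t)$ is well-defined via small nondegenerate perturbations, and the isospectrality required by~\eqref{comdiag} follows from a further arbitrarily small perturbation of the comparison homotopies.
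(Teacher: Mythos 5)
Your overall strategy is the right one and matches the paper's at the level of architecture: squeeze the Hamiltonian between two comparison functions, pass to Floer--Novikov homology via the locality of the construction (the $1$-form can be arranged to vanish on the Darboux neighbourhood), and then use the factorization of the monotone homotopy map through the middle filtered group. However, there is a real gap at the heart of the construction that the proposal glides over.

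You write that ``the SDM condition produces autonomous germs $f_-\leq F_t\leq f_+$ at $x$, each with $x$ as an isolated global maximum,'' and that the local primitive $F_t$ of $\theta_t$ has $x$ as a maximum with $F_t(x)=0$. This is not a consequence of Definition~\ref{defn:SDM}: the SDM condition only says $\Delta(\psi_t,x)=0$ and $\hf^{\loc}_n(\psi_t,\overline{x})\neq 0$, and there is no reason for the generating Hamiltonian of $\psi_t$ (in a Darboux chart) to have $x$ as a local extremum, nor for its Hessian to be small. The whole content of Lemma~\ref{fixsdm} (the geometric characterization of an SDM, adapted from \cite[Proposition~5.8]{ginzburg2009action}) is precisely to produce contractible Hamiltonian loops $\zeta_i$ such that, after composing $\psi_t$ with $\eta^t=\zeta_i^t$, the new local generating Hamiltonian $\hat K^i_t$ has $p=x(0)$ as a \emph{strict} local maximum with Hessian tending to zero in suitable bases. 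Without this lemma you cannot set up a sandwich $H_-\leq K\leq H_+$ at all, because you have nothing to squeeze. Moreover, the inner function $H_-$ is not an arbitrary autonomous germ; in the paper (following \cite{ginzburg2010conley,ginzburg2009action}) it is $G^0\#F$, where $G^0$ comes from the loops of Lemma~\ref{fixsdm}, and the isospectral homotopy $F^s=G^s\#F$ between $H_-$ and the auxiliary bump $F$ is what makes the commutative diagram~\eqref{comdiag} applicable with matching action windows. Your appeal to ``a further arbitrarily small perturbation'' to manufacture isospectrality is not a substitute for this structure.

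A second, smaller point: you assert that the SDM hypothesis gives non-vanishing of $\hf^{\loc}_{n+1}(\psi^k_t,\overline{x}^k)$. The definition puts the non-vanishing in degree $n$, not $n+1$; the degree $n+1$ in the conclusion arises because, after a nondegenerate perturbation, the comparison function $H_+^{(k)}$ has exactly one orbit of index $n+1$ with action in $(kc+\delta_k,kc+\varepsilon)$ (the shell orbit near the maximum), and none of index $n-1$ there. Conflating the two degrees obscures where the shift by one comes from.

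Finally, the reduction to the case where $\theta$ vanishes near $x$ should be made explicit: in the paper one replaces $\psi_t$ by $\psi_t\circ\eta^t$ (same endpoint, same flux since $\eta^t$ is a Hamiltonian loop), then subtracts a globally defined $d\bar f$ to arrange $\theta'=\theta-d\bar f$ to vanish on $W$; the relevant local-to-global decomposition of filtered Floer--Novikov homology (Lemma~\ref{lemepsilon1}) and Lemma~\ref{hfn=hfinU}, which identifies $\hfn^{(a,b)}_*(\eta_F;U)$ with the Hamiltonian group $\hf^{(a,b)}_*(F;U)$, are then what permit you to import the Hamiltonian computation of \cite{ginzburg2010conley} verbatim. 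Your statement that ``$\theta$ plays no role in the squeezing argument'' is morally right, but without this chain of identifications the passage from local Hamiltonian Floer homology to global $\hfn$ is not justified.
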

Theorem~\ref{thm:SDMHFN} implies the existence of infinitely many periodic points of $\psi$ under the assumption that at least one symplectic degenerate maximum exists.

\begin{thm}[cf. {\cite[Theorem 1.18]{ginzburg2009action}}]\label{thm:infty}
Let $(M^{2n},\om)$ be a closed rational symplectically Calabi--Yau manifold and $\psi_t$ a symplectic isotopy such that $\psi_0 = id$. Assume that $\psi_t$ has a symplectic degenerate maximum and that $\psi_1=\psi$ has finitely many fixed points. Then $\psi$ has a simple $p$-periodic point for each sufficiently large prime $p$, which is contractible with respect to $\psi_t^p$.
\end{thm}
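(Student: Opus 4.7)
The plan is to argue by contradiction, following the scheme of \cite{ginzburg2010conley, ginzburg2009action} but with Floer--Novikov homology in place of Hamiltonian Floer homology. Suppose that for some sufficiently large prime $p$ the map $\psi$ has no simple contractible (with respect to $\psi_t^p$) periodic point of exact period $p$. Because $p$ is prime and $\psi$ has only finitely many fixed points, every contractible $1$-periodic orbit of $\psi_t^p$ is then a $p$-th iterate $y^p$ of a contractible fixed point $y \in \fix_0(\psi_t)$ of $\psi$.

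The input to the contradiction is Theorem~\ref{thm:SDMHFN} applied to the SDM $\overline{x}$: writing $c = \mathcal{A}_{\psi_t}(\overline{x})$ and fixing $\varepsilon>0$ sufficiently small, for every prime $p > k_\varepsilon$ there exists $\delta_p \in (0,\varepsilon)$ with
$$\hfn_{n+1}^{(pc + \delta_p,\, pc + \varepsilon)}(p\theta_t) \neq 0.$$
Under the contradiction hypothesis, any generator in the filtered chain complex representing a non-zero class above must come from a lift $\overline{y^p}$ of some iterate of a fixed point, with action lying in the narrow window $(pc+\delta_p, pc+\varepsilon)$ and with local Floer homology contributing to degree $n+1$.

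To rule this out, pass to a $C^1$-small nondegenerate perturbation $\psi_t'$ of $\psi_t^p$ with the same flux $p[\theta]$, as allowed by the Deformation Lemma and the definition of filtered Floer--Novikov homology. By property~\eqref{reldeltacz}, any perturbed orbit $\overline{z}$ that arises from $\overline{y^p}$ and that contributes in degree $n+1$ must satisfy
$$\bigl|\cz(\psi_t',\overline{z}) - p\,\Delta(\psi_t,\overline{y})\bigr| \leq n,$$
forcing $p\,\Delta(\psi_t,\overline{y}) \in [1, 2n+1]$. Since $\fix_0(\psi_t)$ is finite the set of mean indices $\{\Delta(\psi_t,\overline{y})\}$ is finite, so for $p$ large enough either $\Delta(\psi_t,\overline{y}) = 0$, in which case $p\,\Delta = 0$ is excluded from $[1,2n+1]$, or $|p\,\Delta(\psi_t,\overline{y})| > 2n+1$. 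In particular, the SDM itself has $\Delta(\psi_t,\overline{x}) = 0$ and thus cannot produce a perturbed generator of degree $n+1$ through the mean-index constraint; and regardless, the lift $\overline{x^p}$ has action exactly $pc$, which sits at or below the boundary of the window $(pc+\delta_p, pc+\varepsilon)$, so no sufficiently small perturbation will push it inside. Rationality of $(M,\om)$ ensures that the other lifts of the iterates $\overline{y^p}$ in the cover $\tilde{\mathcal{L}}\overbar{M}^{p\theta}$ populate a discrete set of actions, so by shrinking $\varepsilon$ further at the start we guarantee that no other lift can intrude into the window either. The chain complex in the window in degree $n+1$ is therefore trivial for large $p$, contradicting the display above.

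The step I expect to be the main obstacle is the simultaneous bookkeeping of action and index: one has to perturb only within a window smaller than $\delta_p$, track how the finitely many fixed points of $\psi$ generate (via iteration and lifts) the possible action values near $pc$, and exclude the SDM itself by pairing the action constraint (its lift sits at $pc$, not in $(pc+\delta_p, pc+\varepsilon)$) with the mean-index constraint (its iterate's perturbations cannot have $\cz = n+1$). Rationality of $(M,\omega)$ and the freedom to refine $\varepsilon$ before choosing $p$ are both essential to make the two constraints incompatible in the limit $p\to\infty$.
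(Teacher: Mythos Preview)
Your mean-index argument is correct and is exactly the mechanism the paper uses. Both arguments reduce to: non-vanishing of $\hfn_{n+1}^{(pc+\delta_p,\,pc+\varepsilon)}(p\theta_t)$ forces the existence of a contractible $p$-periodic orbit $y$ with $\Delta(\psi_t^p,y)\in[1,2n+1]$ (via \eqref{supphfloc}, or equivalently via property~\eqref{reldeltacz} after perturbing), and for all sufficiently large primes $p$ no iterate $z^p$ of a fixed point can satisfy this, since either $\Delta(\psi_t,z)=0$ (giving $p\Delta=0\notin[1,2n+1]$) or $|p\,\Delta(\psi_t,z)|>2n+1$. The paper phrases this directly rather than by contradiction, writing the decomposition $\hfn_*^{(pc-\varepsilon,pc+\varepsilon)}(p\theta_t)=\bigoplus\hf^{\loc}_*(\psi_t^p,\overline{y})$ and reading off the orbit with mean index in $[1,2n+1]$, but the content is the same.

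Where you go astray is in the action bookkeeping, which you flag as ``the main obstacle'' but which is in fact no obstacle at all. You do \emph{not} need to show that ``no other lift can intrude into the window'': it is perfectly fine for many lifts $\overline{y^p}$ to have action in $(pc+\delta_p,pc+\varepsilon)$, because none of them can contribute in degree $n+1$ once the mean-index constraint is in force. The action window is only there to import the non-vanishing from Theorem~\ref{thm:SDMHFN}; the contradiction is obtained purely in the index direction. In particular, your separate exclusion of the SDM via the observation that $\mathcal{A}(\overline{x^p})=pc$ lies outside the open window is redundant (the SDM already fails the mean-index test since $\Delta=0$), and the appeal to rationality to control the actions of the remaining lifts is both unnecessary and not obviously workable as stated, since lifts under the $\Gamma_{\theta,\omega}$-action can shift action by arbitrary values of $\mathcal{I}_\omega+\mathcal{I}_\theta$. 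Drop that paragraph and your proof is clean.
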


The proofs of Theorem~\ref{thm:SDMHFN} and Theorem~\ref{thm:infty} follow the proofs of \cite[Theorem~1.17]{ginzburg2009action} and \cite[Theorem~1.18]{ginzburg2009action}, respectively. The main arguments are explained in Section~\ref{subsection:proofinfty}.

In Section~\ref{subsection:decomposition} we describe, as in \cite[Section~5.1]{ginzburg2009action}, a direct sum decomposition of the filtered Floer--Novikov homology for a particular type of $1$-form associated with the vector field that generates the symplectic isotopy. In Section~\ref{subsection:SDM}, we present the definition and a geometrical characterization of symplectically degenerate maxima.

\begin{remark}
In the proof of Theorem \ref{thm:main_surfaces}, we also use the fact that the presence of a symplectic degenerate minimum (SDm) - see Definition \ref{defn:SDM} - implies the existence of infinitely many periodic orbits of $\psi \in \Symp_0(M,\om)$. This follows from Theorem \ref{thm:infty} and the fact that if $y$ is a SDm of $\psi_t$, the closed curve $x(t) = y(-t)$ is a SDM of $\psi_t^{-1}$.
\end{remark}

\subsection{Decomposition of filtered Floer--Novikov homology}\label{subsection:decomposition}
Suppose that $(M,\om)$ is a geometrically bounded\footnote{For the definition of geometrically bounded see, e.g. \cite{cieliebak2004symplectic}. This condition is implicitly used in the proof of Theorem \ref{thm:SDMHFN} when $M = \R^{2n}$.} rational symplectic manifold. Let $U \subset W \subset M$ be two open sets with smooth boundary and compact closure. We assume that the closed sets $\overline{U}$ and $\overline{W}$ are isotopic in $M$, in fact, we shall assume that $W$ is a small neighborhood of $\overline{U}$. Consider a Hamiltonian function $F,$ which is constant on $M\setminus U$, say $F|_{M\setminus U} = C$. For a $1$-form $\eta$ that vanishes on $W$, let $\eta_F = \eta + dF$ and, as before, we denote by $X_t$ the symplectic vector field such that $\eta_F = \om(\cdot, X_t)$, and by $\psi_t$ the symplectic isotopy corresponding to $X_t$. Furthermore, consider $a,b \notin \mathcal{S}(\eta_F)$ such that $a<b$ and
$$(C+\lambda_0 \Z) \cap (a,b) = \emptyset,$$
where $\lambda_0$ is the rationality constant of $(M,\om)$. Under these conditions, the filtered homology $\hfn_*^{(a,b)}(\eta_F)$ is well-defined; see Section \ref{hfnsection}. 

Suppose, for now, that all $1$-periodic orbits in $\mathcal{P}(\eta_F)$ with action in the interval $(a,b)$ are nondegenerate. Denote by $\cfn^{(a,b)}_*(\eta_F;U)$ the vector space, over $\Q$, generated by such orbits with cappings equivalent to those contained in $U$. In particular, such generators are contained in $U$. Let $\cfn^{(a,b)}_*(\eta_F;M,U)$ be the vector space generated by the remaining capped $1$-periodic orbits with action in $(a,b)$.

\begin{remark}\label{rmk:decomp_MU}
In contrast with \cite[Section 5.1]{ginzburg2009action}, the orbit of a capped $1$-periodic orbit in $\cfn^{(a,b)}_*(\eta_F;M,U)$ may not be contained in $U$. \emph{A priori}, a generator of $\cfn^{(a,b)}_*(\eta_F;M,U)$ is a capped $1$-periodic orbit with action in $(a,b)$ where either the orbit is contained in $U$ and its capping is not equivalent to a capping contained in $U$ or the orbit is outside $U$. Note that there are no orbits not contained in $U$ that intersect $U$, since $\eta=0$ on $W$ and $F$ is constant in $W\setminus U$. On the other hand, in light of Remark \ref{rmk=ham}, it is straightforward to check that $\cfn^{(a,b)}_*(\eta_F;U)$ coincides with the vector space $\cf^{(a,b)}_*(F;U)$ defined in \cite[Section 5.1]{ginzburg2009action}.
\end{remark}

Hence, we have the direct sum decomposition
\begin{equation}\label{cfdecomp}
\cfn^{(a,b)}_*(\eta_F) = \cfn^{(a,b)}_*(\eta_F;U) \oplus \cfn^{(a,b)}_*(\eta_F;M,U),
\end{equation}
on the level of vector spaces. Now, we fix an arbitrary $\om$-compatible almost complex structure $J_0$.

\begin{lem}[{cf.~\cite[Lemma 5.1]{ginzburg2009action}}]\label{lemepsilon1}
There exists a constant $\epsilon(U,W,M)>0$, independent of $F$ and $\eta$, such that whenever $b-a < \epsilon(U,W,M)$, the decomposition \eqref{cfdecomp} is a direct sum of complexes for any regular almost complex structure $J$ sufficiently close to $J_0$ and compatible with $\om$. Thus, we have in the obvious notation, the decomposition
\begin{equation}\label{hfndecomp}
\hfn^{(a,b)}_*(\eta_F) = \hfn^{(a,b)}_*(\eta_F;U) \oplus \hfn^{(a,b)}_*(\eta_F; M, U).
\end{equation}
\end{lem}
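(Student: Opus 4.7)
The strategy is to find an energy threshold $\epsilon(U,W,M)>0$ below which no Floer trajectory can connect a generator of $\cfn^{(a,b)}_*(\eta_F;U)$ to one of $\cfn^{(a,b)}_*(\eta_F;M,U)$, or vice versa. Recall that the boundary operator decreases the action functional, and for any Floer trajectory $\nu$ connecting capped orbits $\overline{x}_\pm$ the energy satisfies $E(\nu)=\mathcal{A}_{\psi_t}(\overline{x}_-)-\mathcal{A}_{\psi_t}(\overline{x}_+)$. For generators in the action window $(a,b)$ this is bounded above by $b-a$. Hence if we ensure $b-a<\epsilon(U,W,M)$ is strictly smaller than the minimum energy of any trajectory crossing between the two summands, the decomposition \eqref{cfdecomp} descends to a splitting of complexes.

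The first key geometric input is that $X_t$ vanishes identically on $W\setminus\overline{U}$: indeed $\eta_F=\eta+dF$, where $\eta$ is zero on $W$ by hypothesis and $dF$ is zero on $M\setminus U$ since $F$ is locally constant there, so $\om(\cdot,X_t)=0$ on $W\setminus\overline{U}$, which forces $X_t\equiv0$ in that annular region by nondegeneracy of $\om$. Consequently, on the preimage $\nu^{-1}(W\setminus\overline{U})$, the Floer equation reduces to the unperturbed $J$-holomorphic equation $\partial_\tau\nu+J_t\,\partial_t\nu=0$.

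Next I plan to invoke the monotonicity inequality for $J$-holomorphic curves (Gromov, Sikorav; see also the treatment adapted to Floer theory in \cite{cieliebak2004symplectic} for geometrically bounded $M$) together with the fact that $W$ is an open neighbourhood of $\overline{U}$ with positive distance between $\partial U$ and $\partial W$ in the metric $\om(\cdot,J_0\cdot)$. This provides a constant $\epsilon(U,W,M)>0$ such that any nonconstant $J$-holomorphic piece contained in the compact annular region $\overline{W\setminus U}$ with one boundary component meeting $\partial U$ and another meeting $\partial W$ has symplectic area at least $\epsilon(U,W,M)$, and this bound is stable under $C^0$-small perturbations of $J$ away from $J_0$. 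A trajectory $\nu$ whose asymptotic orbits lie on opposite sides of the decomposition necessarily produces such a $J$-holomorphic piece, so its energy is at least $\epsilon(U,W,M)$; choosing $b-a<\epsilon(U,W,M)$ rules this out.

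The main obstacle, and the point I expect to require the most care, is the case flagged in Remark \ref{rmk:decomp_MU}: the two asymptotic orbits $x_\pm$ both lie in $U$ but their cappings are inequivalent in a way that places $\overline{x}_-$ and $\overline{x}_+$ in different summands. Here $\nu$ itself need not leave $U$. The resolution is to observe that the covering $\tilde{\mathcal{L}}\overbar{M}^\theta$ registers capping equivalence through the pair of integrals $\int_u\om$ and $\int_{l_{u,t}}\theta$, and a trajectory staying entirely inside $U$ deforms one capping into the other through disks in $U$, so it cannot change the equivalence class in the covering group that separates the two summands. Hence any $\nu$ realizing such a crossing must exit $U$, and we are back in the previous monotonicity argument. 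With $\epsilon(U,W,M)$ so chosen, the splitting \eqref{cfdecomp} is a direct sum of subcomplexes and passes to homology as \eqref{hfndecomp}.
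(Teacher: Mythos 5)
Your proposal is correct and follows essentially the same crossing-energy strategy as the paper: the paper's (very brief) proof also reduces to showing that a Floer trajectory with one end capped in $U$ and small energy cannot reach the other summand, citing Gromov compactness to produce $\epsilon(U,W,M)$, whereas you obtain the same constant via monotonicity for $J$-holomorphic curves in the shell $\overline{W}\setminus U$ — a routine substitution. Your explicit treatment of the capping subtlety from Remark \ref{rmk:decomp_MU} (a trajectory contained in $U$ transports a capping in $U$ to a capping in $U$, so it cannot move a generator out of $\cfn^{(a,b)}_*(\eta_F;U)$) is a useful elaboration of what the paper leaves implicit, though the phrase about ``not changing the equivalence class in the covering group'' is slightly off — the equivalence class does change along a trajectory; the relevant invariant is simply whether the resulting capping $u_-\#\nu$ lies in $U$.
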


The proof of Lemma \ref{lemepsilon1} follows word-for-word the proof in \cite[Lemma 5.1]{ginzburg2009action} with a small adaptation when considering generators of $\cfn^{(a,b)}_*(\eta_F;M,U)$; recall Remark~\ref{rmk:decomp_MU}.

\begin{proof}
It is sufficient to check that if $\overline{x}$ and $\overline{y}$ are two capped $1$-periodic orbits of $\eta_F$ with action in $(a,b)$ connected by a Floer trajectory such that one of these capped orbits, say $x$, has capping in $U$, then the orbit $y$ contained in $U$ and its capping is equivalent to a capping contained in $U$, provided that $b-a< \epsilon(U,W,M)$. This follows from a standard argument based on the Gromov compactness theorem and the constant $\epsilon(U,W,M)$ is sometimes called the \emph{crossing energy} required to cross the \emph{shell} $\overline{W}\setminus U$.
\end{proof}

\begin{remark}
We note that if the action intervals are shorter than $\epsilon(U,W,M)$, then the direct decompositions in \eqref{cfdecomp} and \eqref{hfndecomp} are preserved by the long exact sequence maps in \eqref{leshfn}. In addition, these decompositions are also preserved by monotone decreasing homotopy maps when the Hamiltonians are constant on $M\setminus U$; recall Section~\ref{sec:homotopy}. Moreover, the constant $\epsilon(U,W,M)$ depends on $J_0$ and is bounded away from zero when $J_0$ varies within a compact set.
\end{remark}

In the case where $\mathcal{P}(\eta_F)$ has degenerate (capped) periodic orbits with action in $(a,b)$, we can approximate $F$ by a $C^2$-close Hamiltonian $\widetilde{F}$ such that all (capped) $1$-periodic orbits in $\mathcal{P}(\eta + d\widetilde{F})$ with action in $(a,b)$ are nondegenerate and set, similarly to \cite[p.~2784]{ginzburg2009action},
$$\hfn_*^{(a,b)}(\eta_F;U) = \hfn^{(a,b)}_*(\eta + d\widetilde{F};U)$$
and
$$\hfn^{(a,b)}_*(\eta_F; M, U)=\hfn^{(a,b)}_*(\eta+d\widetilde{F}; M, U).$$

\begin{prop}[{\cite[Proposition 5.2]{ginzburg2009action}}]
There is a constant $\epsilon(U,W,M)>0$, independent of $F$ and $\eta$, such that the direct sum decomposition \eqref{hfndecomp} holds and is preserved by the long exact sequence and decreasing monotone homotopy maps, as long as all action intervals are shorter
than $\epsilon(U,W,M)$ and the Hamiltonians are constant on $M\setminus U$.
\end{prop}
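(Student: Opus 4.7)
The plan is to deduce Proposition from Lemma~\ref{lemepsilon1} by a perturbation argument, with the key technical point being the uniformity of the crossing-energy constant $\epsilon(U,W,M)$ over all sufficiently small perturbations of $F$.

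First, I would fix an $\om$-compatible almost complex structure $J_{0}$ and show that the crossing-energy constant $\epsilon(U,W,M)$ produced in Lemma~\ref{lemepsilon1} can be taken to depend only on the data $(U,W,M,J_{0})$ and \emph{not} on the particular Hamiltonian $F$ or the $1$-form $\eta$ that vanishes on $W$. The point is that the Gromov-compactness / monotonicity argument underlying the crossing-energy lower bound only uses that (i) $\eta+dF$ vanishes identically outside $U$ up to an additive constant, and (ii) the almost complex structure is close to $J_{0}$. Neither of these conditions degenerates as $F$ is perturbed within $C^{2}$, so the constant is uniform on $C^{2}$-bounded families of Hamiltonians constant on $M\setminus U$, and in particular on any $C^{2}$-small neighborhood of a given $F$.

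Next, given $F$ with possibly degenerate $1$-periodic orbits having action in $(a,b)$, pick a $C^{2}$-small perturbation $\widetilde F$ of $F$ such that all orbits in $\mathcal{P}(\eta+d\widetilde F)$ with action in $(a,b)$ are nondegenerate and such that $\widetilde F$ is still constant on $M\setminus U$ (with possibly perturbed constant, which is irrelevant). Provided $b-a<\epsilon(U,W,M)$, Lemma~\ref{lemepsilon1} applied to $\widetilde F$ yields the direct sum decomposition
\begin{equation*}
	\hfn^{(a,b)}_{*}(\eta+d\widetilde F) = \hfn^{(a,b)}_{*}(\eta+d\widetilde F;U) \oplus \hfn^{(a,b)}_{*}(\eta+d\widetilde F;M,U),
\end{equation*}
which by the definitions recalled just before the proposition \emph{is} the required decomposition \eqref{hfndecomp} for $\eta_{F}$. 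One must check that this decomposition does not depend on the choice of nondegenerate perturbation; this follows by interpolating between any two such perturbations by a $C^{2}$-small homotopy and observing that the induced continuation maps respect the $U$-vs-$(M,U)$ splitting, because the crossing-energy bound holds uniformly along the homotopy (by the uniformity established above) and action intervals of length less than $\epsilon(U,W,M)$ cannot be crossed.

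Finally, for the compatibility with the long exact sequence and with decreasing monotone homotopy maps, I would argue exactly as in the remark following Lemma~\ref{lemepsilon1}: both types of maps are defined at the chain level by counting Floer-type cylinders whose energy is bounded by the relevant action differences (for the long exact sequence, differences within $(a,b)$; for decreasing monotone homotopies between Hamiltonians constant on $M\setminus U$, the energy estimate of Section~\ref{sec:homotopy} gives the same bound since the homotopy adds nothing positive). As long as all action intervals involved are shorter than $\epsilon(U,W,M)$, a trajectory connecting a generator in the $U$-part to a generator in the $(M,U)$-part would have to cross the shell $\overline{W}\setminus U$ and thus carry energy at least $\epsilon(U,W,M)$, a contradiction. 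Hence these maps are block-diagonal with respect to \eqref{hfndecomp}. The main obstacle is really the uniformity of $\epsilon(U,W,M)$ across the perturbation and homotopy families; once this is in hand, everything else reduces to the nondegenerate statement of Lemma~\ref{lemepsilon1}.
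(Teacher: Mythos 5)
Your proposal is correct and follows essentially the same approach as the paper, which in turn defers to Ginzburg--G\"urel \cite[Proposition~5.2]{ginzburg2009action}. The paper's own treatment consists precisely of (i) the crossing-energy argument in Lemma~\ref{lemepsilon1} with the remark that $\epsilon(U,W,M)$ is independent of $F$ and $\eta$ and is stable under compact variation of $J_0$, (ii) the definition of the $U$-local and $(M,U)$-parts in the degenerate case via a $C^2$-small nondegenerate perturbation $\widetilde F$, and (iii) the observation that the long exact sequence and monotone decreasing homotopy maps respect the splitting because the connecting trajectories cannot carry enough energy to cross the shell $\overline{W}\setminus U$. Your proposal reproduces all three of these steps and additionally makes explicit the well-definedness check (independence of the choice of nondegenerate perturbation $\widetilde F$) by interpolating and invoking the uniform crossing energy; this check is left implicit in the paper but is indeed part of the cited argument.
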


Moreover, the proof of Lemma \ref{lemepsilon1} yields the following conclusion.

\begin{prop}[{\cite[Proposition 5.4]{ginzburg2009action}}]
The homology $\hfn_*^{(a,b)}(\eta_F;U)$ and the monotone homotopy maps are independent of the ambient manifold $M$ when $b-a$ is sufficiently small.
\end{prop}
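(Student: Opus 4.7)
The plan is to show that every ingredient defining $\cfn^{(a,b)}_*(\eta_F;U)$, its differential, and the monotone homotopy maps is intrinsic to a neighborhood of $\overline{U}$, so nothing depends on how that neighborhood sits inside $M$. The key leverage is the crossing-energy argument already invoked in the proof of Lemma~\ref{lemepsilon1}: the same Gromov-compactness bound that made the decomposition \eqref{cfdecomp} respect the differential will confine all relevant Floer and parametric Floer trajectories to $\overline{W}$.

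First I would handle the generators. Since $\eta$ vanishes on $W$ and $F$ is constant on $M\setminus U$, the symplectic vector field $X_t$ vanishes on $\overline{W}\setminus U$, so every $1$-periodic orbit is either constant outside $U$ or lies entirely inside $U$. A generator of $\cfn^{(a,b)}_*(\eta_F;U)$ is a capped orbit whose capping is equivalent in $\tilde{\cL}\overline{M}^{\theta}$ to a disk in $U$; since $\theta|_W=0$, the defining equivalence reduces for two such cappings $u_1,u_2$ to the equality $\int_{u_1}\om = \int_{u_2}\om$, which is local to $U$. Hence the underlying graded $\Q$-vector space, together with the action filtration, depends only on $(\overline{U},\om|_{\overline{U}},F|_{\overline{U}})$.

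Next I would handle the differential and the monotone homotopy maps simultaneously by a single crossing-energy argument. Fix a tame almost complex structure $J_0$ on $\overline{W}$. Inspection of the proof of Lemma~\ref{lemepsilon1} shows that the constant $\epsilon(U,W,M)$ may be taken to be the infimum of the Floer energy of any broken or unbroken trajectory (respectively, parametric trajectory for a monotone decreasing homotopy that is constant on $M\setminus U$) that exits $\overline{W}$; this infimum is attained by a $J$-holomorphic configuration with boundary on $\partial U$ and $\partial W$, so it is controlled entirely by the compact geometry of $(\overline{W}\setminus U,\om,J)$ and is independent of $M$. Call this quantity $\epsilon(U,W)$. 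Provided $b-a<\epsilon(U,W)$ and we use an almost complex structure on $M$ agreeing with $J_0$ on $\overline{W}$, every rigid Floer trajectory and every rigid parametric Floer trajectory counted by the maps in question is confined to $\overline{W}$, hence is completely determined by the data $(\om,\eta,F,J)$ restricted to $\overline{W}$. Feeding this into two different embeddings $W\hookrightarrow M_1$ and $W\hookrightarrow M_2$ with compatible data produces identical chain complexes and chain maps, yielding the required $M$-independence.

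The main obstacle is the verification that the crossing-energy constant really is a function of $(W,\om|_W,J_0)$ alone. One must rule out exotic sources of energy loss coming from bubbling off of $J$-holomorphic spheres in $M\setminus W$ or breaking at orbits outside $U$. Both are controlled by insisting that the action window $(a,b)$ is shorter than the $\om$-area of any $J$-holomorphic sphere in $W$ (automatic since there are none, as $W$ can be chosen displaceable inside $M$ and rationality forces a lower bound on sphere areas anyway) and by the fact that any trajectory reaching an orbit outside $U$ must cross the shell and therefore expend at least $\epsilon(U,W)$ units of energy. Once this point is nailed down, $M$-independence of both homology and monotone homotopy maps is immediate.
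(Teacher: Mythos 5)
Your proof takes essentially the same approach as the paper, which defers this proposition to the crossing-energy argument already established in the proof of Lemma~\ref{lemepsilon1}: since the shell $\overline{W}\setminus U$ forces a uniform lower energy bound, all Floer and parametric trajectories contributing to the differential and monotone homotopy maps are trapped inside $\overline{W}$, so the entire chain-level data is local. One small caveat: the parenthetical claim that $W$ ``can be chosen displaceable'' is neither stated in the hypotheses nor the mechanism one actually needs to rule out bubbling — the operative fact is that rationality of $(M,\om)$ bounds sphere areas from below by $\lambda_0$, and the action window is taken shorter than this — but since you immediately invoke that rationality bound ``anyway,'' this does not affect the correctness of the argument.
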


Furthermore, since $\eta$ vanishes on $W \supset U$, the direct summand of the filtered Floer--Novikov homology $\hfn_*^{(a,b)}(\eta_F=\eta+dF;U)$ agrees with the direct summand in the Hamiltonian setting $\hf_*^{(a,b)}(F;U)$ defined in \cite[Section 5.1]{ginzburg2009action}.

\begin{lem}\label{hfn=hfinU}
For any $a,b\notin \mathcal{S}(\eta_F)$ with $a<b$, we have $\hfn_*^{(a,b)}(\eta_F;U) = \hf_*^{(a,b)}(F;U)$,  Moreover, if $M$ is symplectically aspherical, i.e., $\om|_{\pi_2(M)}=0$, then $\hf_*^{(a,b)}(F;U) = \hf_*^{(a,b)}(F)$.
\begin{proof}
By Remark \ref{rmk=ham}, the fact that $\eta_F = \eta + dF = dF$ on $U \subset W$ yields that the action functional $\mathcal{A}_{\eta_F}$ agrees with the Hamiltonian action functional and we have the same chain complex in both constructions. When $M$ is symplectically aspherical, the Hamiltonian $F$ is a function such that $F|_{M\setminus U} = C$ with $C \notin (a,b)$ and, hence, $\cf_*^{(a,b)}(F;U) = \cf_*^{(a,b)}(F)$ from the definition. 
\end{proof}
\end{lem}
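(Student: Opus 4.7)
The strategy is to check the identification directly at the chain level: once the generators and the Floer equation are seen to match on $U$, both equalities follow.

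First, I would unpack the action functional. Since $\eta$ vanishes on $W$, and hence on $U$, we have $\eta_F = dF$ on $U$, and so the symplectic vector field $X_t$ defined by $\om(\cdot,X_t)=\eta_F$ restricts on $U$ to the Hamiltonian vector field of $F$. Hence the $1$-periodic orbits contained in $U$ are the same in both theories. For a capped orbit $(x,u)$ with image and capping in $U$, the term $\int_0^1\!\int_{l_{u,t}}\eta\,dt$ in the action formula \eqref{action} vanishes identically because $\eta\equiv 0$ on $W$, and the remaining terms are precisely the Hamiltonian action of $F$ along $(x,u)$. So the generators of $\cfn^{(a,b)}_*(\eta_F;U)$ correspond one-to-one, with matching actions and Conley--Zehnder indices, to those of $\cf^{(a,b)}_*(F;U)$.

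Next, I would argue that the two differentials agree. This relies on the crossing-energy estimate of Lemma~\ref{lemepsilon1}, which (for action windows shorter than $\epsilon(U,W,M)$ and $J$ close to $J_0$) prevents Floer trajectories between generators of $\cfn^{(a,b)}_*(\eta_F;U)$ from escaping $U$. On trajectories confined to $U$ the Floer equation \eqref{floereq} reduces to the ordinary Hamiltonian Floer equation for $F$, since $X_t=X_F$ there and $J$ is the same. Thus the signed counts coincide and $\hfn_*^{(a,b)}(\eta_F;U) = \hf_*^{(a,b)}(F;U)$.

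For the second assertion, I would assume $\om|_{\pi_2(M)}=0$, so $\lambda_0=0$ and the standing hypothesis $(C+\lambda_0\Z)\cap(a,b)=\emptyset$ collapses to $C\notin (a,b)$. Any $1$-periodic orbit of $X_F$ lying outside $U$ is a constant critical point of the locally constant function $F\equiv C$, with Hamiltonian action equal to $C$; in the aspherical setting the action is independent of the choice of capping. Such an orbit therefore contributes no generator to $\cf^{(a,b)}_*(F)$, so $\cf^{(a,b)}_*(F;U)=\cf^{(a,b)}_*(F)$ and passing to homology yields the claim. The only genuine subtlety is the confinement of Floer trajectories to $U$ in the first assertion, but this is precisely what was already extracted in Lemma~\ref{lemepsilon1}, so no new analysis is required.
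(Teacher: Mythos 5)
Your proof is correct and takes essentially the same route as the paper: because $\eta$ vanishes on $W\supset U$, the action functional, generators, indices, and Floer data all coincide on $U$, and asphericity forces the constant orbits outside $U$ to have action $C\notin(a,b)$. Your explicit appeal to the crossing-energy estimate of Lemma~\ref{lemepsilon1} merely makes transparent the trajectory confinement that is already built into the definition of the $U$-summand of the chain complex.
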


\subsection{Symplectic Degenerate Maximum}\label{subsection:SDM}
In this section, we recall the definition of a symplectic degenerate maximum and a geometrical characterization of such an orbit; see~\cite[Section~1.5 and Section~5.2]{ginzburg2009action}.

\begin{definition}\label{defn:SDM}
	An isolated capped $1$-periodic orbit $\overline{x}$, with  $x\in \mathcal{P}(\theta_t)$, is called a \emph{symplectically degenerate maximum (SDM) of $\psi_t$} if $\Delta(\psi_t,x) = 0$ and $\hf^{\loc}_n(\psi_t,\overline{x}) \neq 0$. On the other hand, if $\Delta(\psi_t,x) = 0$ and $\hf^{\loc}_{-n}(\psi_t,\overline{x}) \neq 0$, $\overline{x}$ is a \emph{symplectically degenerate minimum (SDm) of $\psi_t$}.
\end{definition}

\begin{remark}
	Here, in the definition of a SDM, we use the conventions of the degree as in e.g. \cite{ginzburg2010conley} or \cite{GG15_ccbeyond}, in contrast with the conventions in \cite{ginzburg2009action}.
\end{remark}

For a capped closed curve $(x,u)$ in $M$ and a loop of Hamiltonian diffeomorphisms $\zeta^t$ with $\zeta^0=id$, we can define a closed curve $\Phi_\zeta(x,u)$ in $M$ as follows. Let $x(0)=p$. First, we visualize the capping $u\colon D^2 \to M$ as a map $u\colon [0,1] \times S^1 \to M$ such that $u([0,1] \times \{0\}) = u(\{0\}\times S^1) = \{p\}$ and $u(\{1\} \times S^1) = x$. Then $x_s = u(s,\cdot)$, for $s \in [0,1]$, is a family of closed curves with $x_0 \equiv p$, $x_1 = x$ and $x_s(0) = p$ for all $s$. Hence, $\Phi_\zeta(x,u)(t) := \zeta^t(x(t))$ is again a closed curve with capping $(t,s) \mapsto \zeta^t_s(x_s(t))$. 

There is a geometrical characterization of a symplectically degenerate maximum which we recall now. Recall that given two periodic Hamiltonians $K_t$ and $H_t$, their \emph{composition} is the Hamiltonian
$$(K\#H)_t = K_t + H_t \circ (\varphi^K_t)^{-1},$$
generating the flow $\varphi_t^K \circ \varphi_t^H$.


\begin{lem}[cf. {\cite[Proposition 5.8]{ginzburg2009action}}]\label{fixsdm}
	Let $\{\psi_t\} \subset \Symp_0(M,\om)$ be a symplectic isotopy such that $\psi_0 = id$. Suppose that $x \in \mathcal{P}(\theta_t)$ is a $1$-periodic orbit which is a symplectically degenerate maximum of $\psi_t$. Let $p=x(0)$ and $W$ be a neighborhood of $p$ where $\theta$ is exact, say $\theta = df$, and hence, on $W$, $\theta_t = d\hat{H}_t$, where $\hat{H}_t=f+H_t$ . Then, there exists a sequence of contractible loops $\zeta_i$ of Hamiltonian diffeomorphisms generated by Hamiltonians $G^i$ such that $\Phi_{\zeta_i}(p)=x$ and the following conditions are satisfied:
	\begin{enumerate}
		\item The point $p$ is a strict local maximum for $\hat{K}^i_t$, for all $t\in S^1$ and all $i$, where $\hat{K}^i$ is given by $\hat{H} = G^i \# \hat{K}^i$.
		\item There exist symplectic bases $\Theta^i$ in $T_pM$ such that
		$$\Vert d^2(\hat{K}_t^i)_p\Vert_{\Theta_i} \to 0 \quad \text{uniformly in} \ t\in S^1.$$
		Here $\Vert \cdot \Vert_{\Theta_i}$ denote the norm with respect to the Euclidean inner product for which $\Theta_i$ is an orthonormal basis.
		\item The linearization of the loop $(\zeta_i^{-1} \circ \zeta_j)$ at $p$ is the identity map for all $i$ and $j$ and, moreover, the loop $(\zeta_i^t)^{-1} \circ \zeta_j^t$ is contractible to $id$ in the class of loops fixing $p$ and having the identity linearization at $p$.
	\end{enumerate} 
\end{lem}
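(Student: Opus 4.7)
The plan is to reduce this lemma to its Hamiltonian analog \cite[Proposition 5.8]{ginzburg2009action}, exploiting the fact that on a neighborhood of the contractible orbit $x$ the symplectic isotopy $\psi_t$ is locally Hamiltonian. The three conclusions are purely local at $p$ (value, Hessian, and linearization), so once one recognizes the local picture is Hamiltonian, nothing new needs to be proved.

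First, I would arrange that $\theta$ admits a primitive on a tubular neighborhood of the entire orbit $x$, not merely of $p$. The capping $u$ implicit in the definition of a symplectically degenerate maximum satisfies $\partial u = x$, so by Stokes' theorem $\int_x \theta = \int_u d\theta = 0$. Hence $\theta$ is exact on some simply connected tubular neighborhood $W'$ of $x$, and after replacing $W$ by $W \cap W'$ (enlarged along $x$) we may assume both that $x \subset W$ and that $\theta|_W = df$. On this $W$ the vector field $X_t$ with $\om(\cdot,X_t) = \theta + dH_t$ becomes the Hamiltonian vector field of $\hat H_t = f + H_t$, so $\psi_t$ restricts to a local Hamiltonian flow generated by $\hat H_t$.

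Next, I would translate the SDM hypothesis to the Hamiltonian setting. Both $\hf^{\loc}(\psi_t,\overline{x})$ and $\Delta(\psi_t,x)$ depend only on the germ of $\psi_t$ along $S^1 \times x$. Choosing any global Hamiltonian $\tilde H$ on $M$ that agrees with $\hat H$ in a smaller tubular neighborhood of $x$ (and is extended arbitrarily outside), the Hamiltonian isotopy $\varphi_t^{\tilde H}$ has $x$ as a symplectically degenerate maximum in the standard Hamiltonian sense. Now \cite[Proposition 5.8]{ginzburg2009action} applied to $\tilde H$ produces loops $\zeta_i \in \Ham(M,\om)$ generated by Hamiltonians $G^i$ supported in a small neighborhood of $x$ inside $W$, such that $\Phi_{\zeta_i}(p) = x$ and the splittings $\tilde H = G^i \# \hat K^i$ satisfy properties (1)--(3).

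Finally, because the supports of $G^i$ lie where $\tilde H = \hat H$, the identity $\tilde H = G^i \# \hat K^i$ restricts on $W$ to $\hat H = G^i \# \hat K^i$, so the same loops and functions realize the splitting in the statement of the lemma. Conditions (1), (2), (3) transfer verbatim: (1) is the value of $\hat K^i_t$ at $p$, (2) is a $2$-jet statement at $p$, and (3) depends on the $1$-jets of the loops $\zeta_i$ at $p$; all are unaffected by how $\tilde H$ was extended outside a neighborhood of $x$. The main obstacle is really just the bookkeeping in this last step, verifying that the local Hamiltonian presentation on $W$ is enough to quote the global Hamiltonian result; this works because Ginzburg--G\"urel's construction of the $\zeta_i$ is itself local along $x$.
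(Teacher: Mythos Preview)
The paper does not prove this lemma; it simply states it with the citation ``cf.\ \cite[Proposition~5.8]{ginzburg2009action}'' and moves on. Your reduction to the Hamiltonian case by exploiting that $\theta$ is locally exact near the orbit is exactly the intended argument, and it matches how the lemma is subsequently used in the proof of Theorem~\ref{thm:SDMHFN}, where the objects $\hat H$, $\hat K$ are treated as local data near $p$ and then extended globally via \cite[Lemma~2.8]{ginzburg2010conley}.

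One small slip: a tubular neighborhood $W'$ of the loop $x$ is not simply connected---it is homotopy equivalent to $S^1$. Your conclusion that $\theta|_{W'}$ is exact is still correct, but the reason is that $H^1(W';\R)$ is generated by the class dual to $[x]$, and you already computed $\int_x\theta=0$ from the capping. You should also be slightly careful with the claim that the $G^i$ from \cite[Proposition~5.8]{ginzburg2009action} can be taken supported inside $W$: Ginzburg--G\"urel's loops are constructed in a neighborhood of the \emph{orbit} $x$, not of the point $p$, so you need $W$ (or its enlargement $W'$) to contain the whole orbit, which you arranged in your first step. With those two clarifications the argument is complete.
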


\subsection{SDM and Infinitely Many Periodic Orbits}\label{subsection:proofinfty}
%
The proof of Theorem~\ref{thm:SDMHFN} follows closely the arguments in the proof of \cite[Theorem 1.17]{ginzburg2009action} with minor adjustments to our setting.

\begin{proof}[Proof of Theorem \ref{thm:SDMHFN}]
The (local) loop $\eta_{\text{loc}}^t:=(\varphi^t_{\hat{H}})^{-1}\circ \varphi^t_{\hat{K}}$, where $\hat{H}$ and $\hat{K}:=\hat{K}^1$ are as in Lemma~\ref{fixsdm}, extends to a loop $\eta^t$ of global Hamiltonian diffeomorphisms on $M$ generated by a periodic Hamiltonian say $G_t$; see \cite[Lemma~2.8]{ginzburg2010conley}. Note that $\hat{K}_t=\hat{H}_t+G_t\circ \varphi^t_{\hat{H}}$ (up to adding a constant). The symplectic isotopy $\psi^{\prime}_t$ defined by $\psi_t\circ\eta^t$ is based at the identity with endpoint $\psi^\prime_1=\psi_1$ and satisfies $\text{Flux}(\{\psi^{\prime}_t\})=\text{Flux}(\{\psi_t\})$. Hence, it is sufficient to prove Theorem~\ref{thm:SDMHFN} for the $1$-form $\theta^\prime_t = \theta^\prime +dK_t$, where $\theta^\prime: = \theta - d\bar{f}$ and $\bar{f}\colon M \to \R$ is a smooth extension of the primitive $f$ of $\theta$ in $W$, as in Lemma~\ref{fixsdm}, and $K_t=H_t+G_t\circ \varphi^t_{\hat{H}}+\bar{f}$, which coincides with $\hat{K}_t$ on $W$. In particular, the $1$-form $\theta^\prime$ vanishes on $W$. In this case, $p$ is a strict local maximum of $K_t$, for all $t \in S^1$.


Let $H_+$ and $H_-$ be the functions such that $H_+ \geq K \geq H_-$ defined in \cite[\S 5.3.1]{ginzburg2009action}. We recall some properties of these two functions. 
The function $H^+$ is constant and equal to its maximum outside the neighborhood $U$ of $p$. Hence, its periodic orbits outside $U$ are trivial. Moreover, the periodic orbits in $U$ are either trivial or fill in spheres of certain radii. There is a nondegenerate $C^2$-small perturbation of $H_+$ such that each of these spheres splits into $2n$ nondegenerate orbits. For sufficiently large $T$, this perturbation has only one $T$-periodic orbit of index $n$ and action in $(Tc-\delta,Tc+\varepsilon)$, which is the constant orbit $p$, and only one orbit $x$ of index $n+1$ with action in the latter interval. There are no orbits with index $n-1$ and action within this range. For more details about $H_+$, see \cite[\S 7.3]{ginzburg2010conley}. The function $H_-$ is defined as the composition $H_- = G^0 \# F$, where $G^0$ is given in Lemma \ref{fixsdm} and $F$ is an auxiliary bump function defined on a small neighborhood of $p$ such that $F\leq K$, $F(p) = c = K(p)$ is the absolute maximum of $F$ and $F^s := G^s \# F \leq H_+$ for all $s$. In particular, the homotopy $F^s$ is  isospectral beginning in $F^0=H_-$ and ending with $F^1 = F$. Lemma \ref{fixsdm} is also used in the construction of the function $F$; see \cite[\S 7.4]{ginzburg2010conley}.

As explained in \cite[p. 2790]{ginzburg2010conley}, for $0<\varepsilon<\epsilon(U,W,M)$, there exists $k_\varepsilon$ such that
\begin{equation}\label{hfh+F}
\Q \cong \hf_{n+1}^{(kc+\delta_k,kc+\varepsilon)}(H_+^{(k)};U) \xrightarrow[\Psi]{\sim} \hf_{n+1}^{(kc+\delta_k,kc+\varepsilon)}(F^{(k)};U) \cong \Q,
\end{equation}
for all $k> k_\varepsilon$ and some $\delta_k \in (0,\varepsilon)$. Here $U \Subset W$ are Darboux balls centered at $p = x(0)$. We emphasize that, by construction, the homotopy $(F^s)^{(k)}$ from $H_-^{(k)}$ to $F^{(k)}$ is also isospectral and $(F^s)^{(k)} \leq  H_+^{(k)}$ for all $s$. In particular, the isomorphism map $\Psi$ in the middle of \eqref{hfh+F} is the homotopy map induced by the monotone linear homotopy.  Combining \eqref{hfh+F} with Lemma \ref{hfn=hfinU}, we obtain 
$$\Q \cong \hfn_{n+1}^{(kc+\delta_k,kc+\varepsilon)}(k(\theta^\prime + dH_+);U) \xrightarrow{\sim} \hfn_{n+1}^{(kc+\delta_k,kc+\varepsilon)}(k(\theta^\prime + dF);U).$$
Since $(kc+\varepsilon) - (kc+\delta_k) = \varepsilon - \delta_k < \epsilon(U,W,M)$, Lemma \ref{lemepsilon1} guarantees that these groups enter the filtered Floer--Novikov homology of $k(\theta^\prime + dH_{+})$ and $k(\theta^\prime + dF)$ as direct summands. In particular, we have the nonzero map
$$\hfn_{n+1}^{(kc+\delta_k,kc+\varepsilon)}(k(\theta^\prime + dH_+)) \xrightarrow{\neq 0} \hfn_{n+1}^{(kc+\delta_k,kc+\varepsilon)}(k(\theta^\prime + dF)).$$
We are in an analogous situation to that in \eqref{comdiag}, and hence, we have the following commutative diagram
\begin{equation}
\begin{tikzcd}
\hfn_{n+1}^{(kc+\delta_k,kc+\varepsilon)}(k(\theta^\prime + dH_+)) \arrow[rd,"\neq 0"] \arrow[d,"\Psi_{H_+,H_-}"] \\ \hfn_{n+1}^{(kc+\delta_k,kc+\varepsilon)}(k(\theta^\prime + dH_-)) \arrow[r,"\cong"] & \hfn_{n+1}^{(kc+\delta_k,kc+\varepsilon)}(k(\theta^\prime + dF)),
\end{tikzcd}
\end{equation}
where the horizontal isomorphism is induced by the isospectral homotopy $\theta^\prime_{s,t} = k(\theta^\prime + dF^s)$ from $k(\theta^\prime+dH_-)$ to $k(\theta^\prime + dF)$ and the other two maps correspond to homotopy maps induced by the monotone linear homotopies. Therefore $\Psi_{H_+,H_-}$ is a nonzero map. This proves the theorem because the homotopy map
$$\Psi_{H_+,H_-} \colon \hfn_{n+1}^{(kc+\delta_k,kc+\varepsilon)}(k(\theta^\prime + dH_+)) \to \hfn_{n+1}^{(kc+\delta_k,kc+\varepsilon)}(k(\theta^\prime + dH_-))$$
factors through the group $\hfn_{n+1}^{(kc+\delta_k,kc+\varepsilon)}(k\theta^\prime_t)$, where $\theta^\prime_t = \theta^\prime + dK_t$, provided that $H_+ \geq K \geq H_-$.
\end{proof}

We finish this section by recalling why Theorem~\ref{thm:SDMHFN} implies the existence of infinitely many periodic points whenever a SDM exists, namely we prove Theorem~\ref{thm:infty}.

\begin{proof}[Proof of Theorem \ref{thm:infty}]
Since $\psi$ has finitely many fixed points, they are all isolated. Let $p$ be a prime number larger than the $k_{\epsilon}$ obtained in Theorem \ref{thm:SDMHFN}. We may suppose that $\psi^p$ has only isolated fixed points, otherwise there would be infinitely many simple $p$-periodic orbits. Let $\overline{x}$ be a SDM and $\mathcal{A}_{\psi_t}(\overline{x})=c$. We can write
$$\hfn_*^{(pc-\varepsilon,pc+\varepsilon)}(p\theta_t) = \bigoplus \hf^{\loc}_*(\psi^p_t,\overline{y}),$$
where the direct sum is over capped $p$-periodic orbits in $\cP(p\theta_t)$ with action in the interval $(pc-\varepsilon,pc+\varepsilon)$ for a sufficiently small $\varepsilon$. Therefore, by Theorem \ref{thm:SDMHFN} and property \eqref{supphfloc}, there must exist a $p$-periodic orbit $y$ in $\mathcal{P}(p\theta_t)$ such that $\Delta(\psi^p_t,y) \in [1 , 2n+1]$. Since $p$ is prime $y$ must be a simple $p$-periodic orbit. This holds because, for a fixed point $z$, either we have $\Delta(\psi^k_t,z)=0$ or $\lim_{k\to \infty} \vert \Delta(\psi^k_t,z)\vert = \infty$. Therefore, $\psi$ has a simple $p$-periodic point for each sufficiently large prime $p$, which is contractible with respect to $\psi_t^p$.
\end{proof}

\begin{remark}
As in \cite[Theorem 1.18]{ginzburg2009action}, under the assumption that $\psi^{k_0}$ has finitely many $k_0$-periodic points and a SDM, one can prove that $\psi$ has infinitely many periodic points for more general symplectic manifolds, namely, closed, rational and {semi-positive}. The proof in the symplectically Calabi--Yau case is much simpler, and since we shall focus on the case where $M = \Sigma_g$ is a closed surface with genus $g \geq 1$, we only need that case.
\end{remark}

\section{Proofs}

\subsection{Proof of Theorem \ref{thm:mean index-zero}}
\label{sec:proof-mean index-zero}
Since composing with a loop of Hamiltonian diffeomorphism does not alter the dimension of the local Floer homology of a fixed point nor the homotopy class of its orbit, it is possible to assume that all contractible orbits of $\psi_t$ are constant. Let $\rS$ be the finite set of fixed points $y$ contractible with respect to $\psi_t$ such that $\lvert\Delta(\psi_t,y)\rvert\neq\Delta(\psi_t,x)$. 
	
	Fix a positive integer  $k_0$ such that
	\begin{equation}
		\label{eq:mean index-zero_tau}
		k\lvert \Delta(\psi_t,x) \pm \Delta(\psi_t,y)\rvert > 4\quad\forall k\geq k_0,\forall y\in\rS,
	\end{equation}
	$k_0\lvert\Delta(\psi_t,x)\rvert>2$, and such that for every prime $p>k_0$, $\psi^p$ is an admissible iteration as in \cite[Section~4.2]{ginzburg2009action} with respect to $x$. 
	
Fix $p>k_0$ and put $\phi_t=\psi_t^p$. Suppose to reach a contradiction that $\fix_0(\phi_t)=\fix_0(\psi_t)$. 

First, we argue when $x$ is nondegenerate, which is simpler and includes the case where $\Delta(\psi_t,x)$ is an odd integer. The fact that $\psi^p$ is an admissible iteration implies that $x^p$ is nondegenerate. It follows from \eqref{eq:mean index-zero_tau} that $x^p$ is non-trivial in $\hfn(\phi_t)$. Indeed, there are no Floer trajectories connecting $x^p$ to $y^p$ for all $y\in S$ since this would imply their mean indices differ by at most $2$. If $y\notin S$ and $\Delta(\psi_t,x)=\Delta(\psi_t,y)$, then $\cz(\phi_t,x^p)=\cz(\phi_t,y^p)$ so there are no Floer trajectories connecting $x^p$ and $y^p$. Finally, if $y\notin S$ and $\Delta(\psi_t,x)=-\Delta(\psi_t,y)$, then $p|\Delta(\psi_t,x)|>2$ implies that $p|\Delta(\psi_t,x)-\Delta(\psi_t,y)|>4$ and, hence, that $|\cz(\phi_t,x^p)-\cz(\phi_t,y^p)|>2$, which again implies there is no Floer trajectory between them. In particular, $x^p$ is closed and non-exact. Finally, $p|\Delta(\psi_t,x)|>2$ also yields $|\cz(\phi_t,x^p)|>1$, which is in contradiction to the fact that $\hfn(\phi_t)$ is supported in the interval $[-1,1]$.
	
It remains to treat the case $x$ is degenerate. In particular, we may suppose that $\Delta(\psi_t,x)$ is a non-zero even number. Following the construction in Section \ref{sec:blow-up_and_gluing}, blow-up $\Sigma$ at each $z\in E$, where 
	\begin{equation*}
		E=\{y\in\fix_0(\phi)\,|\,y\notin S, y\neq x\},
	\end{equation*}
	
	to obtain a surface with boundary $\Sigma_+$ and consider the path of symplectic diffeomorphisms $\phi^+_{t}$ induced by $\phi_t$ whose dynamics on each boundary component of $\partial\Sigma^+$ (corresponding to each $z$) is given by $D\phi_t(z)/||D\phi_t(z)||$. We similarly construct $\Sigma_{-}$ by blowing up $\Sigma$ at $\tau(E)$, where $\tau$ is the anti-symplectic involution given by \emph{reflection}, and define $\phi_t^{-}$. We glue $\Sigma_+$ and  $\Sigma_{-}$ along their common boundaries to obtain a surface $\Sigma'$ with the induced $C^{1}$-symplectic isotopy $\phi_{t}'$.
	
	Since $\Delta(\phi_t,x)$ is a non-zero even integer, the only new orbits that can appear are non-contractible periodic orbits of $\phi_t'$ whose image is the corresponding boundary component of $\partial\Sigma_+=\partial\Sigma_-\subset\Sigma'$. Consequently, the only contractible fixed points of $\phi_{t}'$ are the two copies of the contractible fixed points of $\phi_t$ which were not blown-up. Moreover, there is a tubular neighborhood $\cN$ of $\partial\Sigma_+=\partial\Sigma_-\subset\Sigma'$ with no contractible fixed points. Therefore, we may approximate $\phi_t'$ by a smooth symplectic isotopy $\phi_t''$ that coincides with $\phi_t'$ on the complement of $\cN$ and has no contractible fixed points in $\cN$. 
	
	We will adopt the convention that the fixed point $y_{+}$ (resp. $y_-$) of $\phi_{t}''$ is the copy of the fixed point $y$ of $\phi_t$ that is in $\Sigma_{+}$ (resp. $\Sigma_{-}$) and
	\begin{equation*}
		\Delta(\phi_t,y)=\Delta(\phi''_{t},y_{+})=-\Delta(\phi_{t}'',y_{-}).
	\end{equation*}
	Choose a nondegenerate perturbation $\xi_t$ of ${\phi}_{t}''$ such that
	\begin{equation}
		\label{eq:rational_pert}
		\lvert \Delta(\xi_{t},y'_{\pm}) - \Delta(\phi_t'',y_{\pm})\rvert<\delta  \quad\forall y'_{\pm}\in\cO(\xi_t,y_\pm), \forall y_{\pm}\in\fix_0(\phi_{t}'')
	\end{equation}
	for $\delta<1/2$. Here, $\cO(\xi_t,y_\pm)$ denotes the $1$-periodic orbits $y_\pm$ splits into under the perturbation $\xi_t$ of $\phi_t''$. Combining Equation (\ref{eq:rational_pert}) with the fact that $\hf^{\loc}(\phi_t'', y_{\pm})$ is supported in the interval $[\Delta(\phi_t'',y_\pm)-1,\Delta(\phi_t'',y_\pm)+1]$ implies that, for all $y\in\rS$, there are no Floer trajectories connecting orbits from $\cO(\xi_t,x_\pm)$ to orbits from $\cO(\xi_t,y_\pm)$ and vice-versa. Indeed, there are two cases to be considered: both orbits are in the same component $\Sigma_{\pm}$; the orbits belong to different components. 
	
	In the first case, let us suppose without loss of generality that both belong to $\Sigma_{+}$. If there is a Floer trajectory connecting $x'_+\in\cO(\xi_t,x_+)$ and $y'_+\in\cO(\xi_t,y_+)$, then \[\lvert\cz(\xi_t,x'_+)-\cz(\xi_t,y'_+)\rvert=1,\] which implies
	\begin{align*}
		p\lvert \Delta(\psi_t,x) - \Delta(\psi_t,y)\rvert&=\lvert \Delta(\phi_t'',x_+) - \Delta(\phi_t'',y_+)\rvert\\
		&\leq\lvert \Delta(\xi_t,x'_+) - \Delta(\xi_t,y'_+)\rvert + 2\delta\\
		&\leq\lvert \cz(\xi_t,x'_+) - \cz(\xi_t,y'_+)\rvert+2+2\delta<4;
	\end{align*}
	this contradicts Equation (\ref{eq:mean index-zero_tau}). 
	
	A similar argument shows that in the second case we would have \[p\lvert \Delta(\psi_t,x) + \Delta(\psi_t,y)\rvert<4,\] which also contradicts Equation (\ref{eq:mean index-zero_tau}). 
	
	Furthermore, there are no Floer trajectories connecting orbits from $\cO(\xi_t,x_+)$ to orbits from $\cO(\xi_t,x_-)$ and vice-versa. Indeed, since $k_0\lvert\Delta(\psi_t,x)\rvert >2$, we have
	\begin{align*}
		\lvert \Delta(\xi_t,x'_+) - \Delta(\xi_t,x'_-)\rvert&\geq\lvert \Delta(\phi_t'',x_+) - \Delta(\phi_t'',x_-)\rvert - 2\delta\\
		&=2p\lvert \Delta(\psi_t,x)\rvert-2\delta > 3.
	\end{align*}
	Hence, $\lvert\cz(\xi_t,x'_+)-\cz(\xi_t,x'_-)\rvert>1$, which implies the orbits cannot be connected by a Floer trajectory.
	
	It follows that cycles representing a non-trivial class of $\hf^{\loc}(\phi_t'',x_\pm)$, which exist since $p$ is admissible, remain non-trivial in $\hfn(\xi_t)$. Moreover, $k_0\lvert\Delta(\psi_t,x)\rvert >2$ implies that their grading is non-zero. This contradicts the fact that $\hfn(\xi_t)$ is supported in the interval $[-1,1]$. Therefore, there must be a simple $p$-periodic orbit contractible with respect to $\psi_t^p$; this is what we wanted to show.

\subsection{Proof of Theorem \ref{thm:dichotomy}}
\label{sec:proof-dichotomy}

Suppose that every contractible fixed point $x$ of $\psi_t$ with non-trivial local Floer homology satisfies $\Delta(\psi_t,x)=0$. Hence,
	\begin{equation*}\label{eq:SDM1_1}
		\supp(\hf^{\loc}(\psi_t,x))=\{-1,0,1\}\quad \text{for all}\quad x\in\fix_0(\psi_t).
	\end{equation*}
In particular, the same is true for the canonical complex $(\cfn(\psi_t,\Lambda^0),d)$ defined in Section \ref{sec:canonical}, i.e.,
	\begin{equation*}\label{eq:SDM1_2}
		\supp(\cfn(\psi_t,\Lambda^0))=\{-1,0,1\}
	\end{equation*}
On the other hand, Part $(3)$ of Proposition \ref{prop:canonical} combined with Inequality \eqref{ineq_floc}, imply that differential $d$ is non-trivial. Therefore, by Part $(2)$ of Proposition \ref{prop:canonical}, we must have two consecutive integers $k,k+1\in\{-1,0,1\}$ for which $\cfn(\psi_t,\Lambda^0)$ is non-trivial. If $k=0$, then there exists at least one symplectically degenerate maximum. Otherwise, there exists at leat one symplectically degenerate minimum. This finishes the proof.
	
\section{Surface symplectic diffeomorphisms with finitely many periodic points}\label{sec:examples}
In Section~\ref{section:construction_1fx_pt}, we prove Theorem~\ref{thm:1_fx_pt}; we present a construction of a symplectic flow with exactly one fixed point and no other periodic orbits. In Section~\ref{section:partition}, firstly, we describe a construction of a symplectic flow with exactly $2g-2$ fixed points and no other periodic orbits, giving an alternative (with respect to \cite[Section~3A]{batoreo2018periodic}) proof of Theorem~\ref{thm:2g-2_fx_pts}, and, secondly, comment on Remark~\ref{remark:partition} and present some examples of symplectic flows with finitely many fixed points which satisfy the \emph{partition condition} in Remark~\ref{remark:partition}.   

In the following sections, let $(\Sigma,\omega)$ be a closed orientable surface of genus $g\geq 2$ equipped with an area form $\omega$.

\subsection{Construction of a symplectic flow on $(\Sigma,\omega)$, $g\geq 2$ with exactly one fixed point and no other periodic orbits}\label{section:construction_1fx_pt}

\

The construction of a symplectic flow $\psi_t$ on $\Sigma$ with exactly one fixed point (as in Theorem~\ref{thm:1_fx_pt}) has three steps: in step~$1$, we consider $g$ tori with irrational linear flows, in step~$2$, we construct a surface (with $g$ \emph{circle-ends}) and define on it a Hamiltonian flow with exactly one fixed point and, in step~$3$, we glue each torus to  one of the \emph{ends} of the constructed surface in order to obtain $\Sigma$ with the desired symplectic flow $\psi_t$. A draft of the construction of $\Sigma$ is in Figure~\ref{figure:general}.

\begin{figure}[htb!]
	\includegraphics[scale=0.11,trim={0cm 0cm 0cm 0cm}]{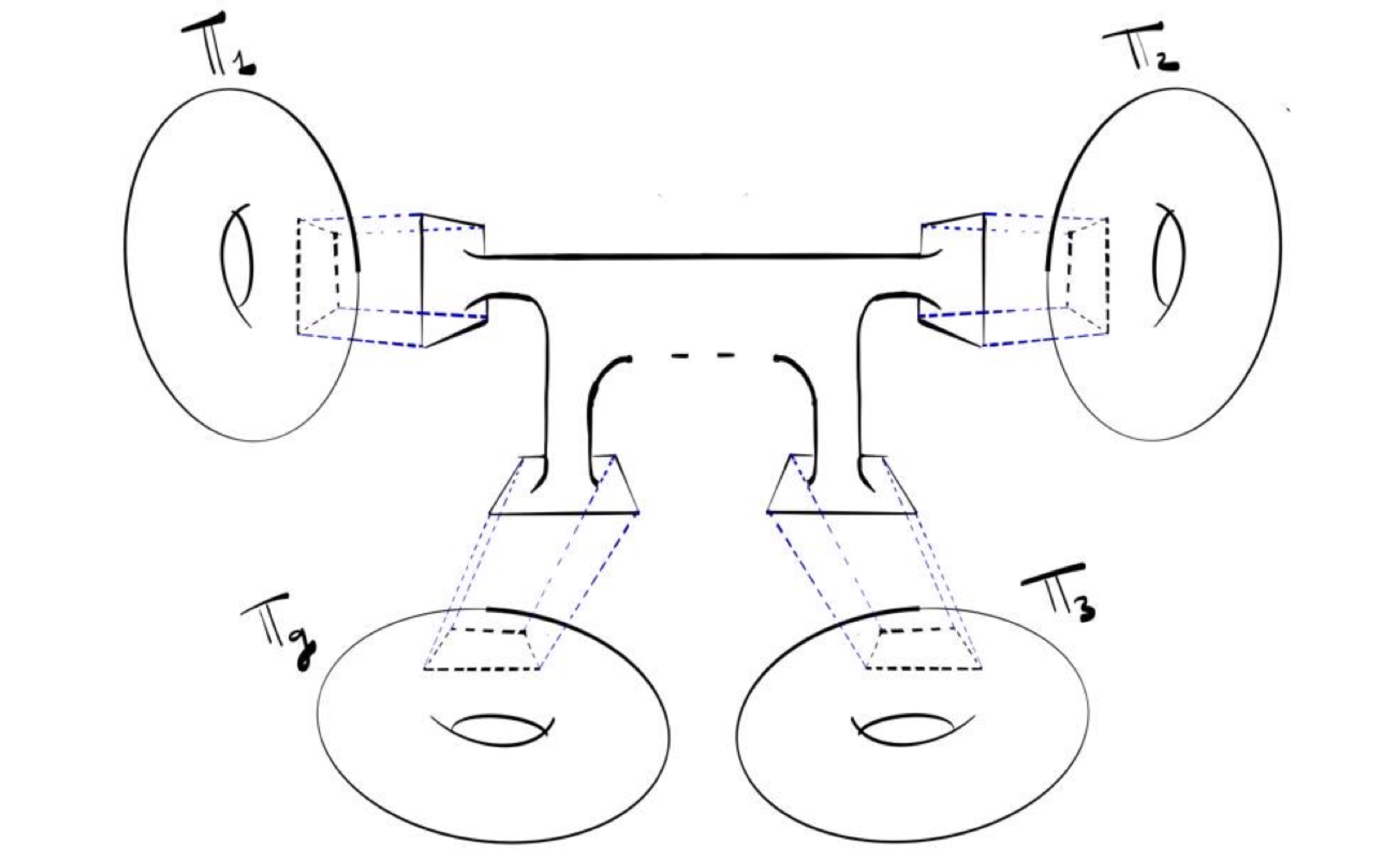}
	\caption{The construction of $\Sigma$.}\label{figure:general}
\end{figure}

\paragraph{\textbf{Step 1}}\label{paragraph:step1}
In this step, consider $g$ tori, $\mathbb{T}_1, \ldots,\mathbb{T}_g$, and the linear flow $\phi_i^t$ on each torus $\mathbb{T}_i, \,i=1,\ldots, g$:
\[
\phi_i^t(x_i,y_i)=(tu_ix_i,tv_iy_i) \quad\text{with}\quad u_i\not=0 \;\text{and}\; \frac{v_i}{u_i}\in\R\backslash\Q,\quad i=1,\ldots, g.
\]  
Here $x_i,y_i$ are the coordinates on $\mathbb{T}_i=\R^2 /\Z^2, \; i=1,\ldots,g.$

Representing each torus by a square $[0,1]\times[0,1]$, where the sides $\{0\}\times [0,1]$ and $[0,1]\times\{0\}$ are identified with $\{1\}\times [0,1]$ and $[0,1]\times\{1\}$, respectively, consider, for each $i=1,\ldots,g,$ a square $R_i$ in $\mathbb{T}_i$ with side length $\varepsilon>0$ such that two parallel sides are segments of a linear flow line of $\phi_i^t$; see Figure~\ref{figure:two_tori}. (The real number $\varepsilon$ is small enough so that each square $R_i$ is inside the square $[0,1]\times [0,1]$.) 
\begin{figure}[htb!]
	\includegraphics[scale=0.12, trim={0 2cm 0 1cm}]{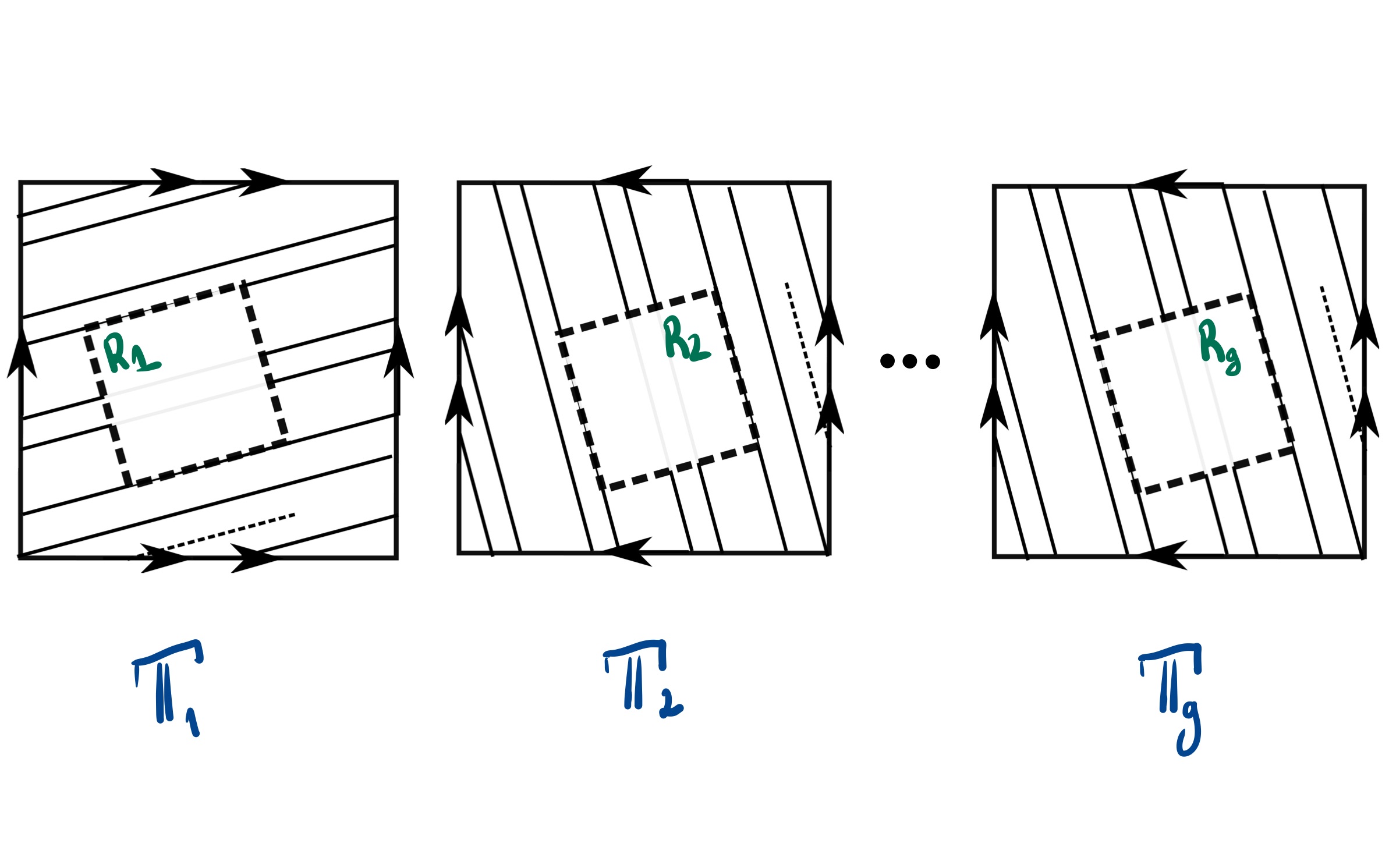}
	\caption{Tori $\mathbb{T}_1,\mathbb{T}_2,\ldots,\mathbb{T}_g$ with irrational linear flow lines and the squares $R_1,R_2,\ldots,R_g$}\label{figure:two_tori}
\end{figure}

\begin{remark}\label{remark_notsquares} In order to distinguish the boundaries of the squares from the interiors of the squares, we denote $R_i$ the boundary and by $\mathring{R_i}$ its interior, $i=1,\ldots,g$.
\end{remark}

\paragraph{\textbf{Step 2}}\label{paragraph:step2}In this step, we define a Hamiltonian flow on a \emph{connecting surface} $U$. We anticipate that the fixed point of $\psi_t$ lies on the surface $U$ where $\psi_t$ is Hamiltonian and, \emph{locally}, the Hamiltonian function is given by 
\begin{eqnarray}\label{eqn:Ham}
	H^0(x,y)=\prod_{j=1}^{h/2} \left(x-\tan(j a) y\right),	
\end{eqnarray}
where $h=4g-2$ and $a=2\pi/h$. Flow lines of $X_{H^0}$ are depicted in Figure~\ref{figure:flow_lines_H0} when $g=2$ (left) and $g=3$ (right).

\begin{figure}[htb!]
	\includegraphics[scale=0.12, trim={0 4cm 0 0cm}]{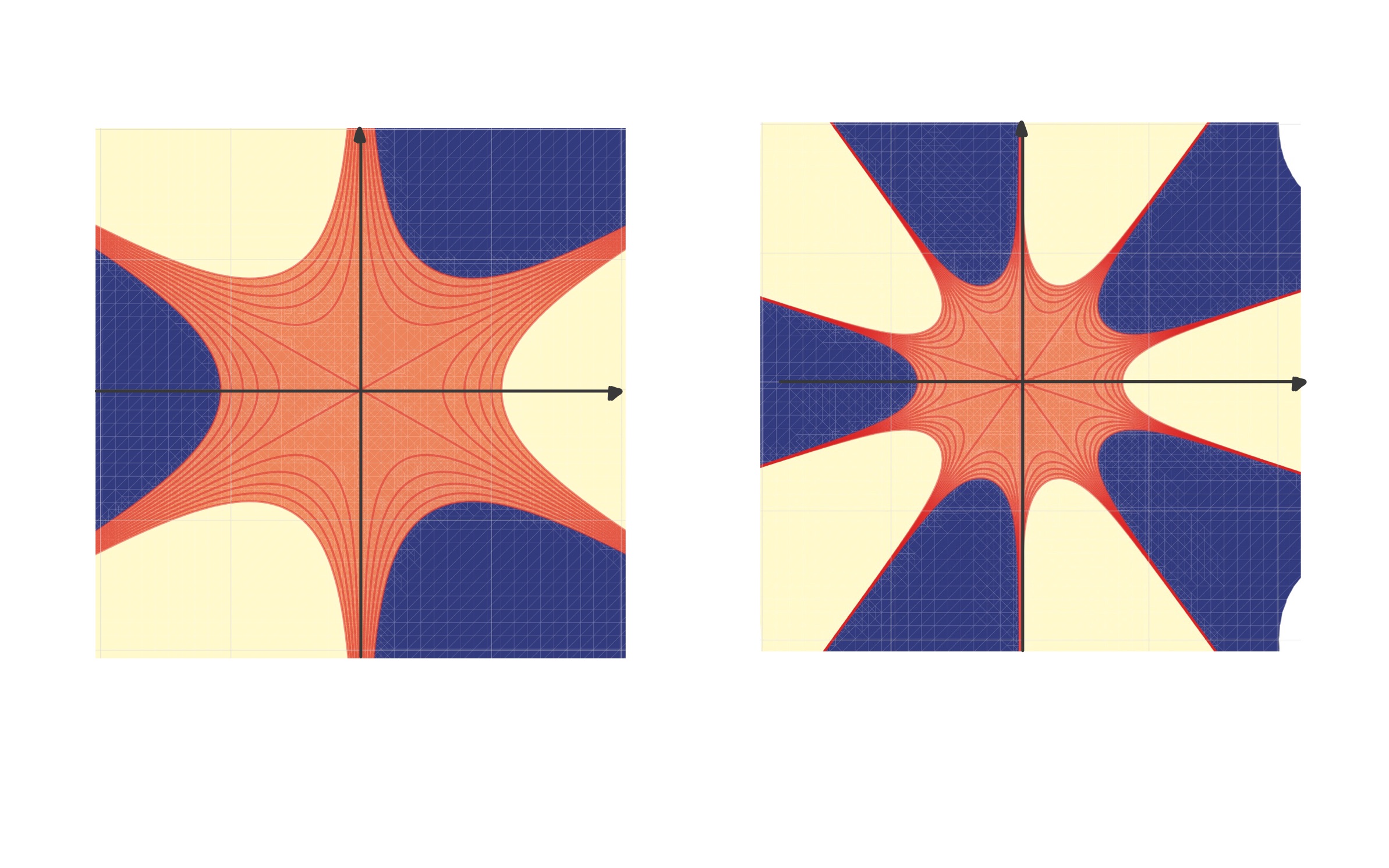}
	\caption{Flow lines of $H^0$, with $g=2$ (left) and $g=3$ (right)}\label{figure:flow_lines_H0}
\end{figure}

\begin{remark}\label{rmk:index_hyperbolicregion}
	The point $(0,0)$ is the unique fixed point of the Hamiltonian vector field $X_{H^0}$. The index of the fixed point is $2-2g$ and it is given by $1-h/2$ where $h$ is the number of hyperbolic regions of the fixed point; see, e.g., \cite[Theorem~9.1 (page 166) and Exercise~9.1 (page 173)]{hartman1982ordinary}. Hence, the number $h=4g-2$ in \eqref{eqn:Ham} is the number of hyperbolic regions of $(0,0)$. For instance, $h=6$, when $g=2$ (and, in this case, the fixed point is a, usually called, Monkey saddle), and $h=10$, when $g=3$, as one can recognize in Figure~\ref{figure:flow_lines_H0}.
\end{remark}

Firstly, we will define the Hamiltonian flow on a convenient neighborhood $D^{0}$ of $(0,0)$ in $\R^2$ and then construct the surface $U$ using the neighborhood $D^{0}$.

\subparagraph{\textbf{Neighborhood} $\mathbf{D^{0}}$} Let $h=4g-2$ and $a=2\pi/h$ and consider three neighborhoods of $(0,0)$ in $\R^2$, $D^{0}$, $D^{'}$ and $D^{''}$, with the following properties:
\begin{enumerate}[label=(\alph*)]
	\item $D^{0} \supset D^{'} \supset D^{''}$		
	\item the interior of each $D^{k}, k=0, {'}, {''},$ is diffeomorphic to an open disc in $\R^2$
	\item each $D^{k}, k=0, {'}, {''},$ is symmetric with respect to the $x$-axis
	and to the $y$-axis
	\item for $k=0, {'}, {''},$ the boundary of $D^{k}$ is a polygon with $l=4g-4$ sides which are labeled by $s^k_1, \ldots, s^k_l,$ and
	\begin{itemize}
		\item sides $s^k_{g-1}$ and $s^k_{3g-3}$ are vertical segments,
		\item sides $s^k_{2g-2}$ and $s^k_l$ are horizontal segments,
		\item each vertex of the boundary of $D^{''}$ lies in a line of the form 
		\begin{eqnarray}\label{eqn:vertices_D}
			x-\tan\left(\frac{a}{2}+(j-1)a\right)y=0,\end{eqnarray} with $j=1,\ldots,h/2=2g-1$ and $j\not=g$,
		\item each line of the form $x-\tan(ja)y=0,$ with $j=1,\ldots, h/2=2g-1$ and $j\not= g-1,g$, is the bisector of some side $s_i$, where $i=1,\ldots, l$ and $i\not= g-1, 2g-2, 3g-3,l$.
		\item Moreover, 
		\begin{itemize}
			\item[$\circ$] when $g$ is even,
			\begin{itemize}
				\item[$\cdot$] $s_i{''}\subset s_i{'} \subset s_i^{0},$ as segments in $\R^2$, when $i\in \{1,\ldots, l\}$ is odd and
				\item[$\cdot$] $s_i{''}, s_i{'}$ and $s_i^{0}$ lie in parallel lines \emph{near} each other (see Remark~\ref{rmk:choices_D}), when $i\in \{1,\ldots, l\}$ is even and,
			\end{itemize}
			\item[$\circ$] when $g$ is odd,
			\begin{itemize}
				\item[$\cdot$] $s_i{''}\subset s_i{'} \subset s_i^{0}$ as segments in $\R^2$, when $i\in \{1,\ldots, l\}$ is even and
				\item[$\cdot$] $s_i{''}, s_i{'}$ and $s_i^{0}$ lie in parallel lines \emph{near} each other (see Remark~\ref{rmk:choices_D}), when $i\in \{1,\ldots, l\}$ is odd
			\end{itemize}
		\end{itemize}
	\end{itemize} 
\end{enumerate}

The neighborhoods $D^{k},\, k=0, {'}, {''},$ are depicted in Figure~\ref{figure:nbhd_D23}, when $g=2$ (left) and when $g=3$ (right), and a sketch of the neighborhood $D^{0}$ is illustrated in Figure~\ref{figure:nbhd_D}.

\begin{remark}
	The neighborhood $D^{0}$ is also denoted by $D$ and, similarly, each side $s_i^{0}$ is denoted by $s_i$, $i=1,\ldots, l$. 
\end{remark}

\begin{figure}[htb!]
	\includegraphics[scale=0.15, trim={0cm 10cm 0cm 0cm}]{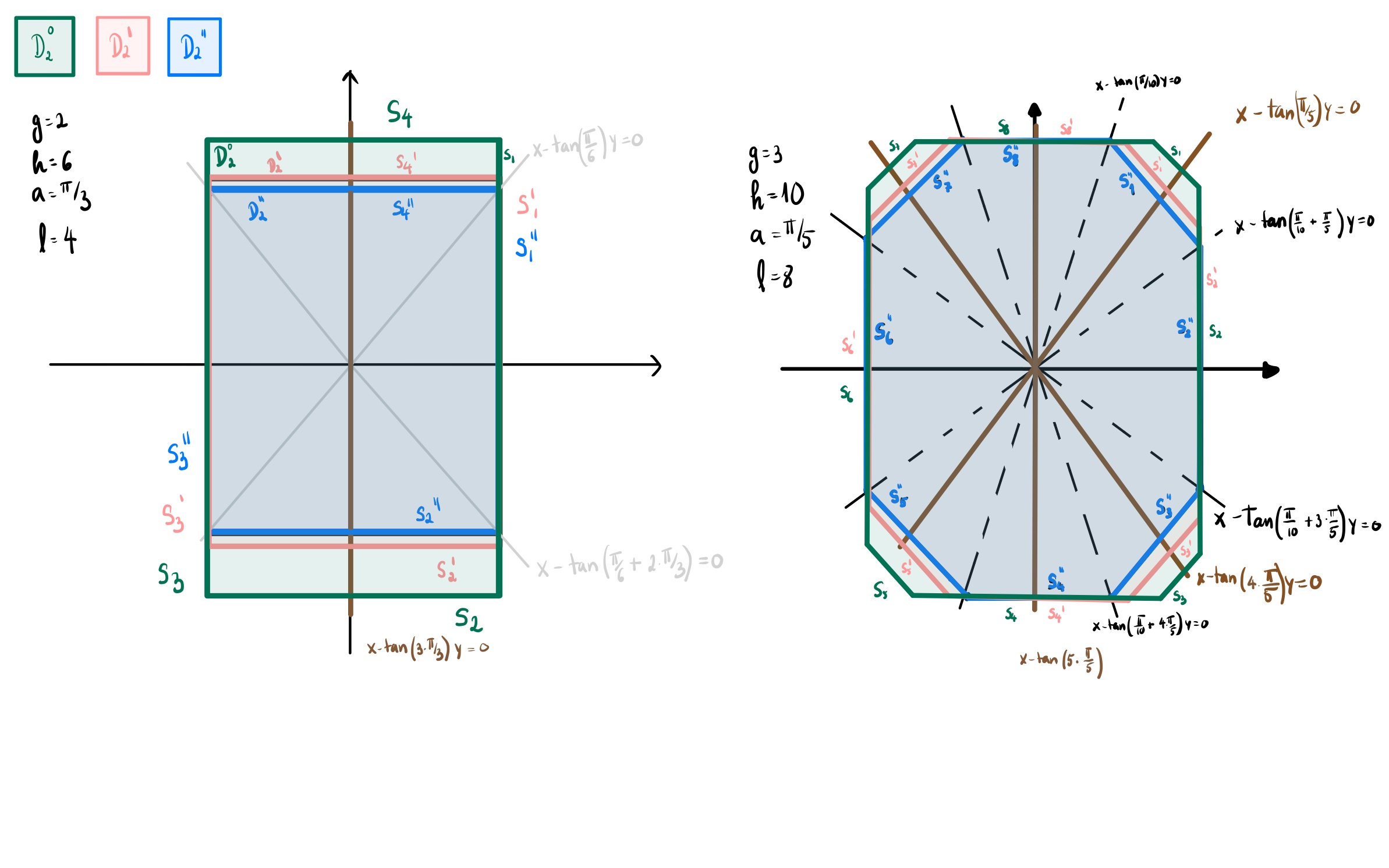}
	\caption{Neighborhoods $D^{k}, k=0, {'}, {''}$ when $g=2$ (left) and when $g=3$ (right)}\label{figure:nbhd_D23}
\end{figure}

\begin{figure}[htb!]
	\includegraphics[scale=0.12, trim={0cm 0cm 0 0cm}]{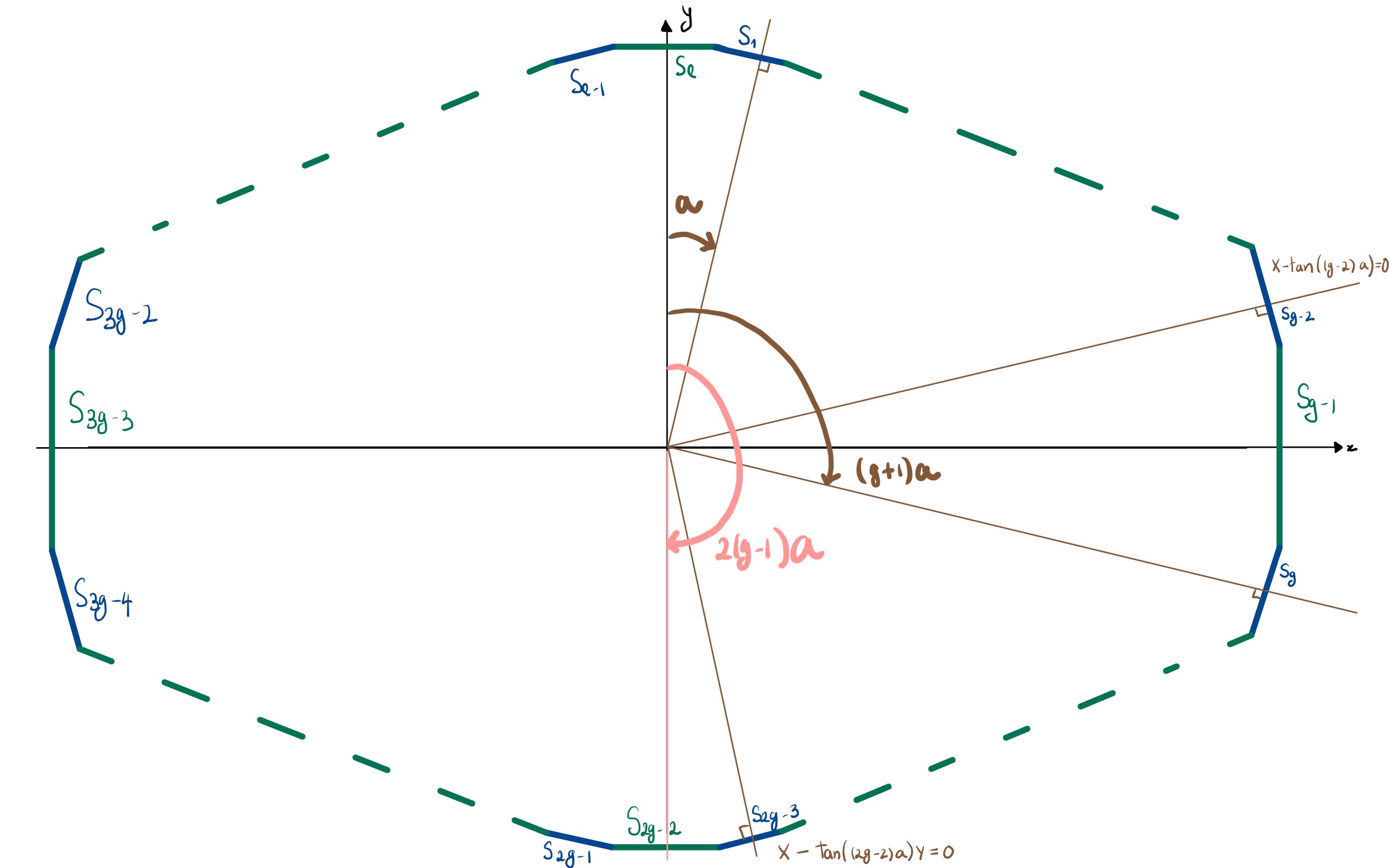}
	\caption{A neighborhood $D$, $g>2$. ($l=4g-4$.)}\label{figure:nbhd_D}
\end{figure}

\subparagraph{\textbf{Surfaces} $\mathbf{S}$ \textbf{and} $\mathbf{U}$}
In order to construct the connecting surface $U$ with a convenient Hamiltonian flow, we will, now, restrict $H^0$ to the domain $D$ and identify appropriately the sides of the boundary of $D$ in order to obtain a surface $S$ whose boundary has $g$ components where each one is a circle and, then, glue each of these circle-ends to a surface $P$ as depicted in Figure~\ref{figure:surfaceP}, where we consider a \emph{height} function. (See Remark~\ref{remark:height_symmetric}.) The surface $S$ is depicted in Figure~\ref{figure:surfaceS}, when $g=2$ (left) and $g=3$ (right).

More precisely, consider a smooth surface (with boundary) $S$ obtained from $D$ identifying the sides in the following way: 
\begin{enumerate}
	\item if $g$ is even, then, when $j$ is even, identify each side $s_j$ with side 
	\begin{itemize}
		\item $s_{2g-j-2}$, if $j\in\{1,\ldots,g-2\}$ or
		\item $s_{6g-j-6}$, if $j\in\{2g-2,\ldots,3g-3\}$; 
	\end{itemize}
	(For instance, when $g=2$, side $s_2$ is identified with $s_4$. Surface $S_2$ is a cylinder; see Figure~\ref{figure:surfaceS} (left).) 
	\item if $g$ is odd, then, when $j$ is odd, identify each side $s_j$ with side 
	\begin{itemize}
		\item $s_{2g-j-2}$, if $j\in\{1,\ldots,g-2\}$ or
		\item $s_{6g-j-6}$, if $j\in\{2g-2,\ldots,3g-3\}$. 
	\end{itemize}	
	(For instance, when $g=3$, sides $s_1$ and $s_5$ are identified with $s_3$ and $s_7$, respectively; surface $S_3$ is depicted in Figure~\ref{figure:surfaceS}~(right)).
\end{enumerate}

\begin{figure}[htb!]
	\includegraphics[scale=0.15, trim={0cm 10cm 0cm 0cm}]{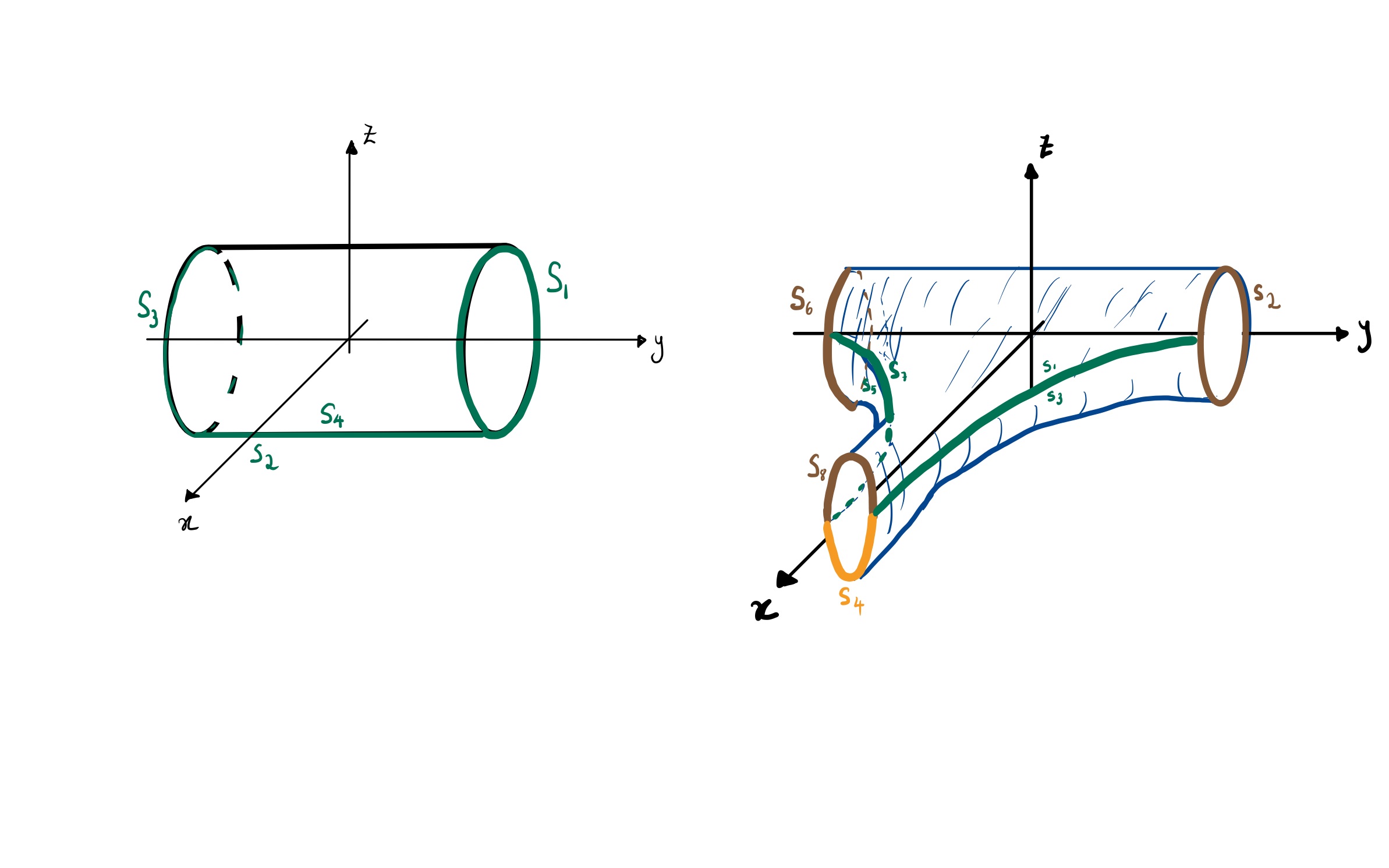}
	\caption{Surface $S$, with $g=2$ (left) and $g=3$ (right).}\label{figure:surfaceS}
\end{figure}	

Moreover, surface $S$ has $g$ boundary components, where each component is a circle $C_k, \, k=1,\ldots,g$, with radius $\varepsilon/4$ that lies in a vertical plane: circles $C_1$ and $C_2$ lie in a vertical planes of the form $y=-b$ and $y=b$, respectively, ($b>0$) and they are obtained from sides $s_{3g-3}$ and $s_{g-1}$, respectively, when identifying the two boundary components of each segment; the remaining circles $C_3,\ldots,C_g$ lie in a plane of the form $x=a$ ($a>0$) and each circle is obtained by identifying the boundary components of a side $s_j$ with, 
\begin{itemize}
	\item   when $g$ is even, 
	\begin{itemize}
		\item the boundary components of $s_{2g-j-2}$, when $j\in\{1, \ldots, g-3\}$ is odd, or   	
		\item the boundary components of $s_{6g-j-6}$, when $j\in\{2g-2, \ldots, 3g-3\}$ is odd and,
	\end{itemize}		
	\item when $g$ is odd, 
	\begin{itemize}
		\item  the boundary components of $s_{2g-j-2}$, when $j\in\{1, \ldots, g-3\}$ is even, or   	
		\item  the boundary components of $s_{6g-j-6}$, when $j\in\{2g-2, \ldots, 3g-3\}$ is even.   	
	\end{itemize}	
	
\end{itemize}  
(Recall Figure~\ref{figure:surfaceS}.)

\begin{remark}
	For instance, $C_3$ is a circle in a plane $x=a$ with an \emph{upper} part obtained from segment $s_{g+1}$ and a \emph{lower} part obtained from segment $s_{g-3}$. (If $g=3$, $s_0:=s_l.$)
\end{remark}

\begin{remark}
	Here $0<a,b<1$ and, in the pictures, numbers $a$ and $b$ are considered \emph{near} $1$, however, this is not crucial to the arguments. 	
\end{remark}

Denote by $\rho\colon D\to S\subset\R^3$ a smooth onto map that translates these gluing procedures on the boundary of $D$ in order to obtain the described $S$. (See remarks~\ref{rmk:rho_2} and~\ref{rmk:nearD_rhog}.) 

\begin{remark}\label{rmk:rho_2}
	When $g=2$, the neighborhood $D$ can be explicitly given by a rectangle of the form $[-b,b]\times [-\pi,\pi]$ and the surface $S$ by the cylinder \[C=\{(x,y,z)\in \R^3\;|\; -b\leq y\leq b, \text{ and } x^2+z^2=r^2\},\] where $r=\varepsilon/4$. Then, let $\rho\colon D\to C$ be defined from the neighborhood $D$ onto $C$ by $\rho(x, y)=(r\cos y, x, r\sin y)$. 
\end{remark}

Let $P$ be a smooth surface obtained from a homotopy between a square and a circle. (See Figure~\ref{figure:surfaceP}.) 
\begin{figure}[htb!]
	\includegraphics[scale=0.11, trim={0 8cm 0 0cm}]{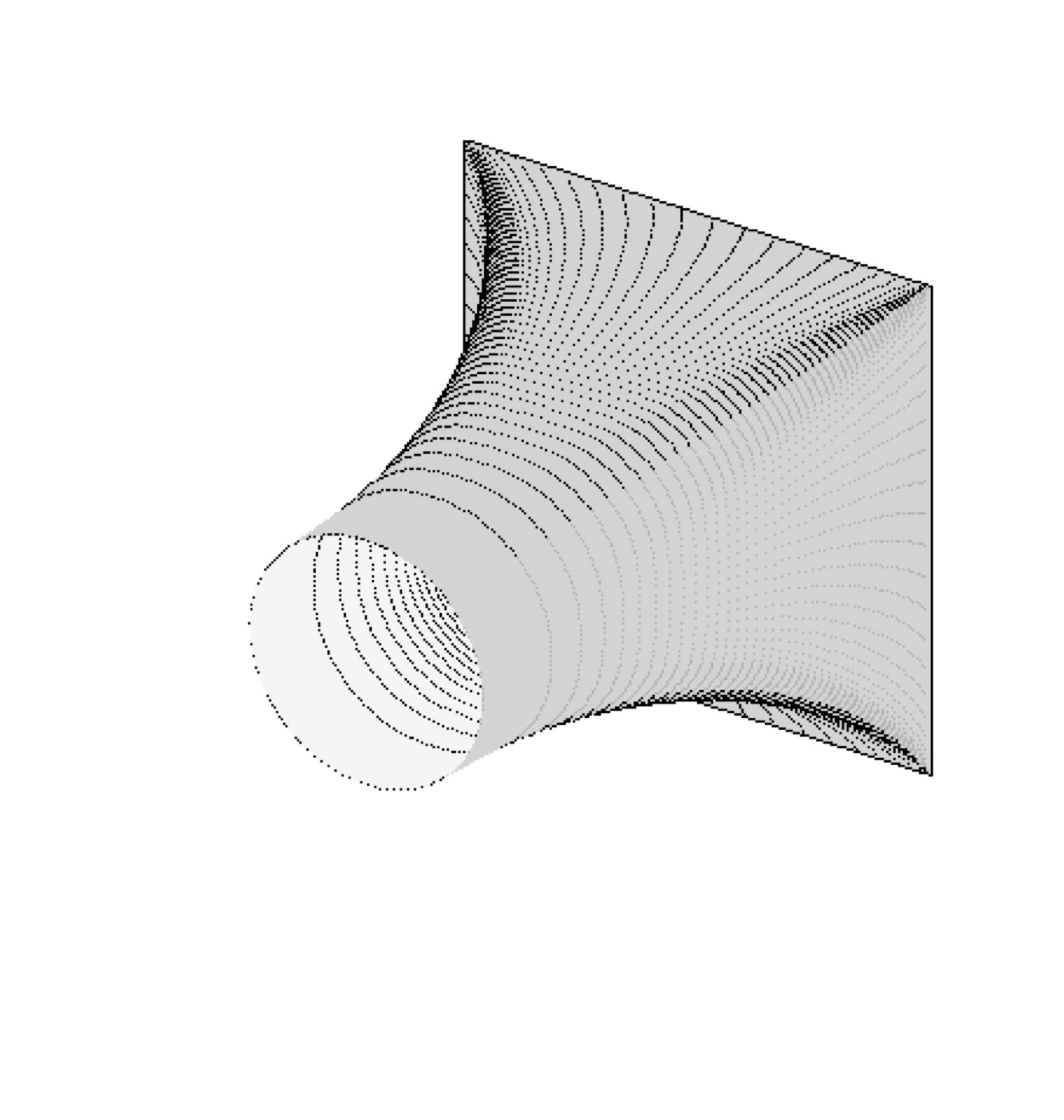}
	\caption{Surface $P$}\label{figure:surfaceP}
\end{figure}
It has two boundary components: a square with side length $\varepsilon$ and a circle with radius $\varepsilon/4$. Finally, $U$ (see Figure~\ref{figure:surfaceU}) is a smooth surface defined piecewise by a surface $S$ together with a surface $P_j \equiv P$  at each end $C_j$ (with circles identified), where $j=1,\ldots, g$. The boundary of $U$ is the disjoint union of $g$ squares $Q_1,\ldots, Q_g$, where the square $Q_1$ lies in the plane $y=-1$, the square $Q_2$ lies in the plane $y=1$ and each of the remaining squares $Q_3,\ldots, Q_g$ lies in the plane $x=1.$

\begin{figure}[htb!]
	\includegraphics[scale=0.14, trim={0cm 8cm 0cm 0cm}]{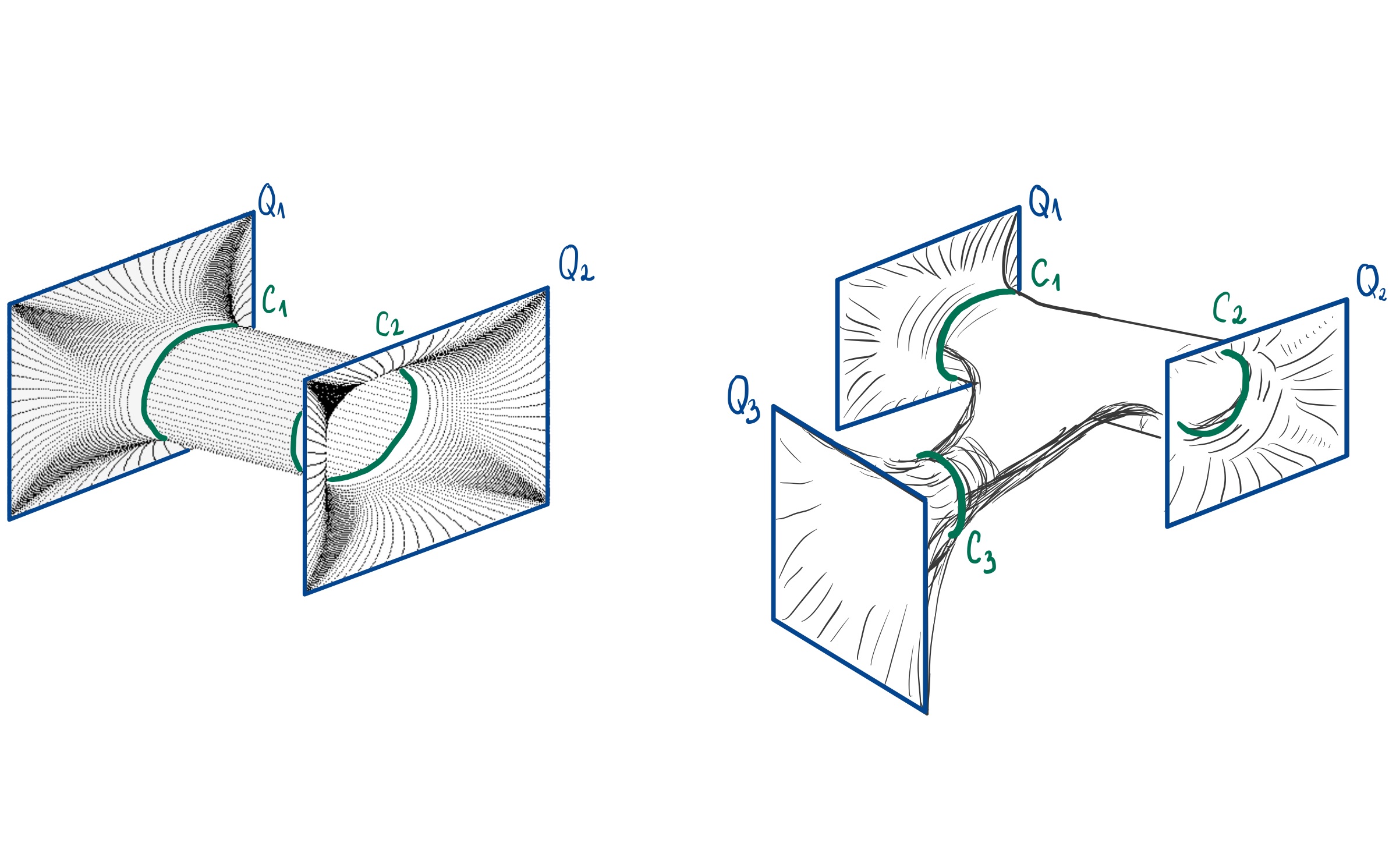}
	\caption{Surface $U$, with $g=2$ (left) and $g=3$ (right).}\label{figure:surfaceU}
\end{figure}

\

\subparagraph{\textbf{Hamiltonian flow on} $\mathbf{S}$}

We will construct a Hamiltonian $H$ on $S$ tracking the following idea: restrict the Hamiltonian $H^0$ in \eqref{eqn:Ham} to the neighborhood $D$, \emph{slightly} change it close to the boundary of $D$ (and obtain a \emph{changed} Hamiltonian $\hat{H}$ on $D$) and then glue, using $\rho$, the boundary components of $D$ and obtain the desired Hamiltonian $H=\hat{H} \circ \rho^{-1}$ on $S$.

Firstly, take the Hamiltonian $H^0\colon \R^2\to \R$ defined in \eqref{eqn:Ham} and a smooth map 
\begin{eqnarray}\label{eqn:H_3^1}
	H^1\colon \R^2\to \R	
\end{eqnarray}
such that $H^1(x,y)=x-\tan(ja)y$, when $(x,y)\in D$, and,
\begin{enumerate}
	\item when $g$ is even,
	\begin{enumerate}
		\item\label{item:1a} $(x,y)$ is above the line that contains $s_j{''}$ or $(x,y)$ is below the line that contains $s_{2g-2+j}{''}$, for each $j\in\{1,\ldots,g-2\}$ even, or
		\item $(x,y)$ is below the line that contains $s_{j-1}{''}$ or $(x,y)$ is above the line that contains $s_{2g+j-3}{''}$, for each $j\in\{g+1,\ldots,2g-1\}$ odd,
	\end{enumerate}
	\item when $g$ is odd,
	\begin{enumerate}
		\item $(x,y)$ is above the line that contains $s_j{''}$ or $(x,y)$ is below the line that contains $s_{2g-2+j}{''}$, for each $j\in\{1,\ldots,g-2\}$ odd, or
		\item\label{item:2b} $(x,y)$ is below the line that contains $s_{j-1}{''}$ or $(x,y)\in D^{0}$ is above the line that contains $s_{2g+j-3}{''}$, for each $j\in\{g+1,\ldots,2g-1\}$ even.
	\end{enumerate}
\end{enumerate}

Define the Hamiltonian $\hat{H}$ on $\R^2$ as a convex combination of $H^0$ and $H^1$, more precisely, 
\begin{eqnarray*}
	\hat{H} (x, y)=(1-\nu(x,y))H^0(x,y)+\nu(x,y)H^1(x,y),
\end{eqnarray*}
where $\nu\colon \R^2 \to [0,1]$ is a smooth function which is $0$ when $(x,y)$ is in the interior of $D^{''}$, $1$ when $(x,y)$ is in $\R^2\backslash D^{'}$ and \emph{strictly monotone}\footnote{The map $\nu$ is strictly monotone in the interior  of $D^{'}\backslash D^{''}$ if it is strictly monotone along level sets of $H^1$ (in the interior of $D^{'}\backslash D^{''}$); more precisely, for $c\in \R$ and $x=\tan(ja)y+c$, the map $y\mapsto \nu(x,y)$ is strictly monotone when $(x,y)$ is in  the interior  of $D^{'}\backslash D^{''}$ and in one of the sets described in \eqref{item:1a}-\eqref{item:2b}.} in the interior of $D^{'}\backslash D^{''}$.

\begin{remark}
	When $g=2$, we denote $D^{'}$ by $[-b,b]\times[-\beta,\beta]$ and $D^{''}$ by $[-b,b]\times[-\alpha,\alpha]$ ($\beta>\alpha>0$). Then, for $(x,y)\in D$, $\hat{H} (x, y)=(1-\nu(y))H^0(x,y)+\nu(y)H^1(x,y)$, where the map $\nu\colon \R\to [0,1]$ is a smooth function which is $0$ when $y$ is in $(-\alpha, \alpha)$ (i.e. $(x,y)$ is in the interior of $D^{''}$), $1$ when $y$ is in $(-\infty, -\beta)\cup (\beta,+\infty)$ (i.e. $(x,y)$ is in $\R^2 \backslash D^{'}$) and is strictly monotone in $(-\beta,-\alpha)\cup(\alpha, \beta)$ (i.e. $(x,y)$ is in the interior of $D^{'}\backslash D^{''}$); in this case, $H^1(x,y)=y$. See Figure~\ref{figure:H_hat_D_3} (left).
\end{remark}

\begin{remark}\label{rmk:choices_D}
	Neighborhoods $D^{'}$ and $D^{''}$ are defined \emph{near} each other and satisfying condition \eqref{eqn:vertices_D} so that, for $j$ even (or odd) when $g$ is even (or odd, respectively), all flow lines of $X_{H^0}$ which intersect $s_j{''}$ either enter or exit the interior of $D^{''}$ and such that there are no \emph{new} (with respect to $X_{H^0}$ and $X_{H^1}$) periodic orbits of $X_{\hat{H}}$ (namely, in the \emph{strips} in $D^{''}\backslash D^{'}$). The periodic points of $\hat{H}$ coincide with the periodic points of $H^0$. 
\end{remark}

The Hamiltonian flow $\varphi^t_{\hat{H}}$ of $X_{\hat{H}}$ has Lefschetz index $2-2g$; moreover, $(0,0)$ is the unique (degenerate) fixed point of $\varphi^t_{\hat{H}}$, with index $2-2g,$ and there are no other periodic orbits; cf. Remarks~\ref{rmk:index_hyperbolicregion} and~\ref{rmk:choices_D}. See Figure~\ref{figure:H_hat_D_3} for
Hamiltonian flow lines of $\hat{H}$ in $D$ when $g=2$ and $g=3$.
\begin{figure}[htb!]
	\includegraphics[scale=0.14,trim={0cm 5cm 0cm 0cm}]{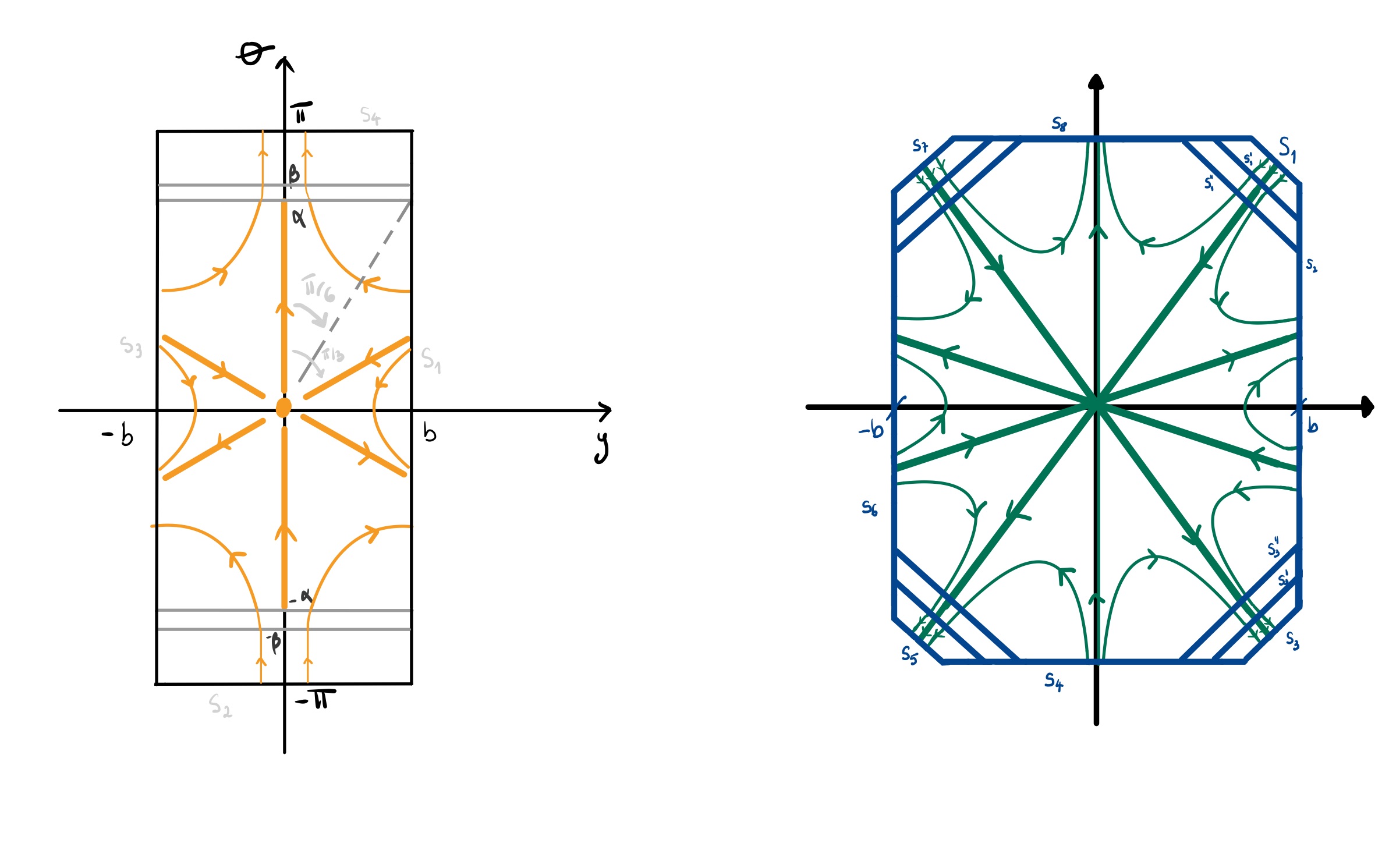}
	\caption{Hamiltonian flow lines of $\hat{H}$ in $D$ when $g=2$ (left) and $g=3$ (right).}\label{figure:H_hat_D_3}
\end{figure}
Then $\hat{H}$ induces a smooth Hamiltonian $H$ on the surface $S$ defined by $H=\hat{H}\circ \rho^{-1}.$ 
\begin{remark}\label{rmk:nearD_rhog}
	The fact that $\hat{H}$ coincides with $H^1$, as defined above, guarantees that, when gluing the sides of $D$ in order to obtain $S$ (using the procedure described on page   and formalized by $\rho$), the Hamiltonian $H$ is well defined and smooth. Recall the case $g=2$ in Remark~\ref{rmk:rho_2}. 
\end{remark}

\

\paragraph{\textbf{Hamiltonian flow on} $\mathbf{U}$} 

Now, we will construct the desired Hamiltonian $F$ on $U$ with the following properties:
\begin{itemize}
	\item $F$ coincides with the height function $(x,y,z)\mapsto z$ on $P_1$ and $P_2$ 
	\item $F$ coincides with the \emph{height} function $(x,y,z)\mapsto y$ on $P_i$ when $i\in \{3,\ldots,g\}$ and 
	\item $F$ coincides with $H$ on $S$ (more precisely, on $S$ except \emph{near} its boundary components). 
\end{itemize}

Firstly, define an auxiliary function $G\colon \R^3\to \R$ by
\begin{eqnarray*}
	G(x,y,z)=(1-\sigma(x,y))z+\sigma(x,y)y
\end{eqnarray*}
where the smooth function $\sigma\colon \R^2 \to \R$ is $0$ when $x<a{''}$, it is $1$ when $x>a{'}$ and it is strictly monotone\footnote{For each $c\in \R$, $y\mapsto \sigma(c,y)$ is strictly monotone.} when $a{''}<x<a{'}$. (Here $\varepsilon<a{''}<a{'}<a$. See Remark~\ref{remark:choices_parameters_F_3}.)

Finally, let $F\colon U \to [-\varepsilon/2, \varepsilon/2]$ be the smooth function defined by 
\[
F(x,y,z)=(1-\tau(x,y))H(x,y,z)+\tau(x,y)G(x,y,z), \quad \text{for } (x,y,z)\in U
\]
where $\tau\colon \R^2\to [0,1]$ is a smooth function which is $0$ when $(x,y)$ is in $(-a{'},a{'})\times(-b{'},b{'})$, $1$ when $(x,y)$ is in $\R^2\backslash [-a,a]\times[-b,b]$ and strictly monotone\footnote{The map $\tau$ is strictly monotone in $(-a,a)\times(-b,b)\backslash [-a{'},a{'}]\times[-b{'},b{'}]$ if, for each $c\in [-b,b]$ and $x<a^{''}$, $x\mapsto \tau(x,c)$ is strictly monotone and, for each $c\in [-a{''},a'']$, $y\mapsto \tau(c,y)$ is strictly monotone.} in $(-a,a)\times(-b,b)\backslash [-a{'},a{'}]\times[-b{'},b{'}]$. Here $0<a{'}<a<1$, $0<b{'}<b<1$, $a{'}$ is \emph{near} $a$ and $b{'}$ is \emph{near} $b$. 
\begin{remark}\label{remark:choices_parameters_F_3}
	The numbers $a{''}$ and $a{'}$ (and $b{'}$) are chosen \emph{near} $a$ ($b$, respectively) so that there are no new (with respect to $X_{H}$ and $X_G$) periodic trajectories of $X_{F}$ arising in $U$, namely, when $(x,y)\in (-a,a)\times(-b,b)\backslash [-a{'},a{'}]\times[-b{'},b{'}].$ Moreover, the number $a{''}$ is an auxiliary parameter in the definition of the function $G$; it avoids the existence of a region where the definition of the function $F$ \emph{interpolates} three functions (the two \emph{height} functions and $H$) instead of just two as constructed.
\end{remark}

\begin{remark}\label{remark:height_symmetric}
	The \emph{height} function $G$ in the definition on $F$ ensures that a flow line of $X_{F}$ which starts at one of the squares of the boundary of $U$ either converges to the unique fixed point $(\rho(0,0)\in S)$ of $F$ or ends at the \emph{symmetric} point of the opposite side of the same square, that is, the point with the same \emph{height} $G$ in the opposite side of the same square; see, e.g, the flow lines in Figure~\ref{figure:glue_flow_3}. 
\end{remark}

\paragraph{\textbf{Step 3}}\label{paragraph:step3}
In this last step, 
\begin{itemize}
	\item cut off $\mathring{R_i}$ from $\mathbb{T}_i,\, i=1,\ldots, g$
	\item identify $R_i$ with $Q_i$, when $i=1,2$, so that the sides of $R_i$ given by segments of a flow line correspond to the sides of $Q_i$ determined by $z=\pm\varepsilon/2$ (see Figure~\ref{figure:glue}) and
	\begin{figure}[htb!]
		\includegraphics[scale=0.14,trim={0cm 5cm 0cm 0cm}]{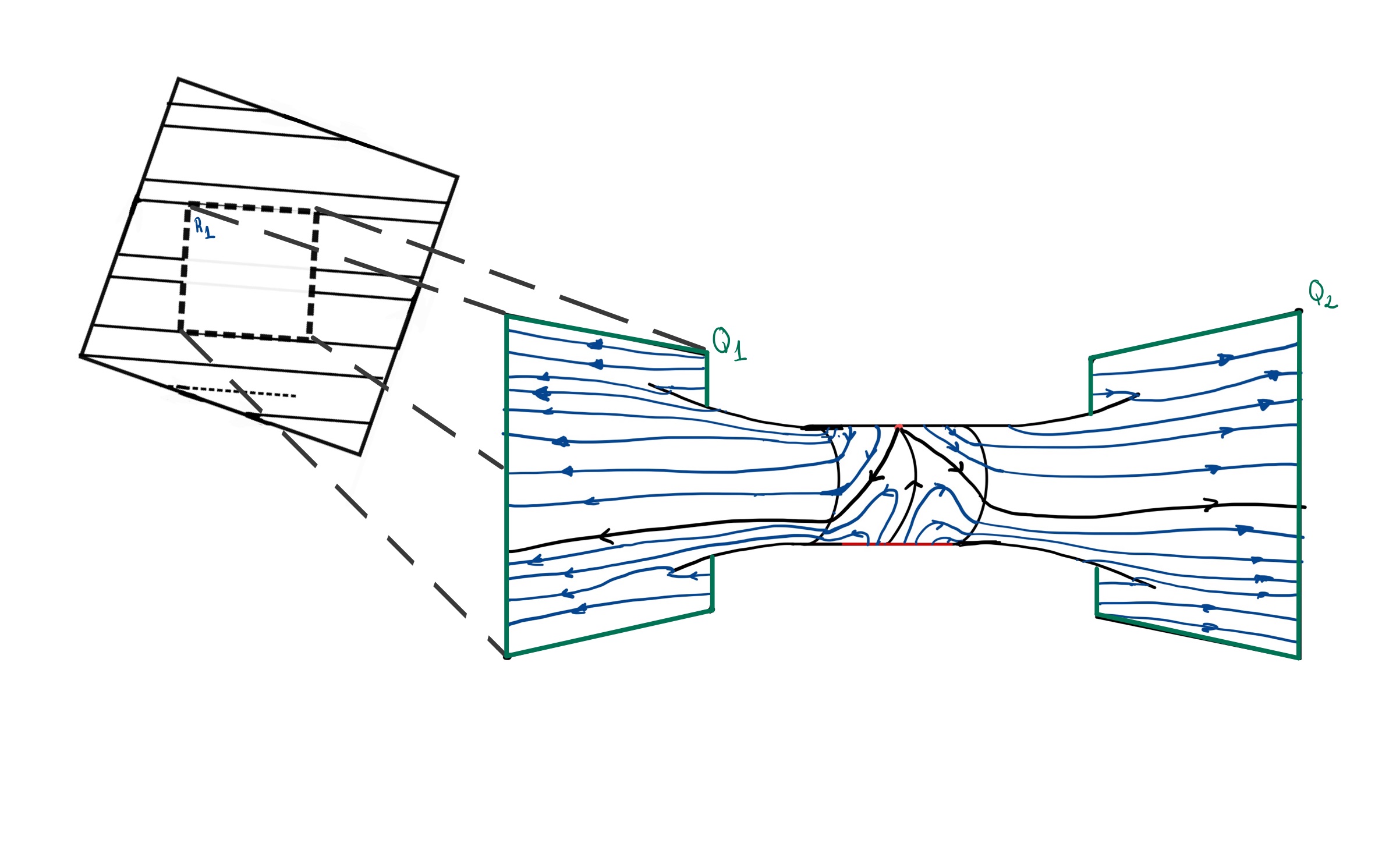}
		\caption{Identification of $R_1$ with $Q_1$, when $g=2$. }\label{figure:glue}
	\end{figure}
	\item identify $R_i$ with $Q_i$, when $i=3,\ldots,g$, so that the sides of $R_i$ given by segments of a flow line correspond to the sides of $Q_i$ determined by $y=\pm\varepsilon/2$ (see Figure~\ref{figure:glue_flow_3}).
\end{itemize}
\begin{figure}[htb!]
	\includegraphics[scale=0.14,trim={0cm 0cm 0cm 0cm}]{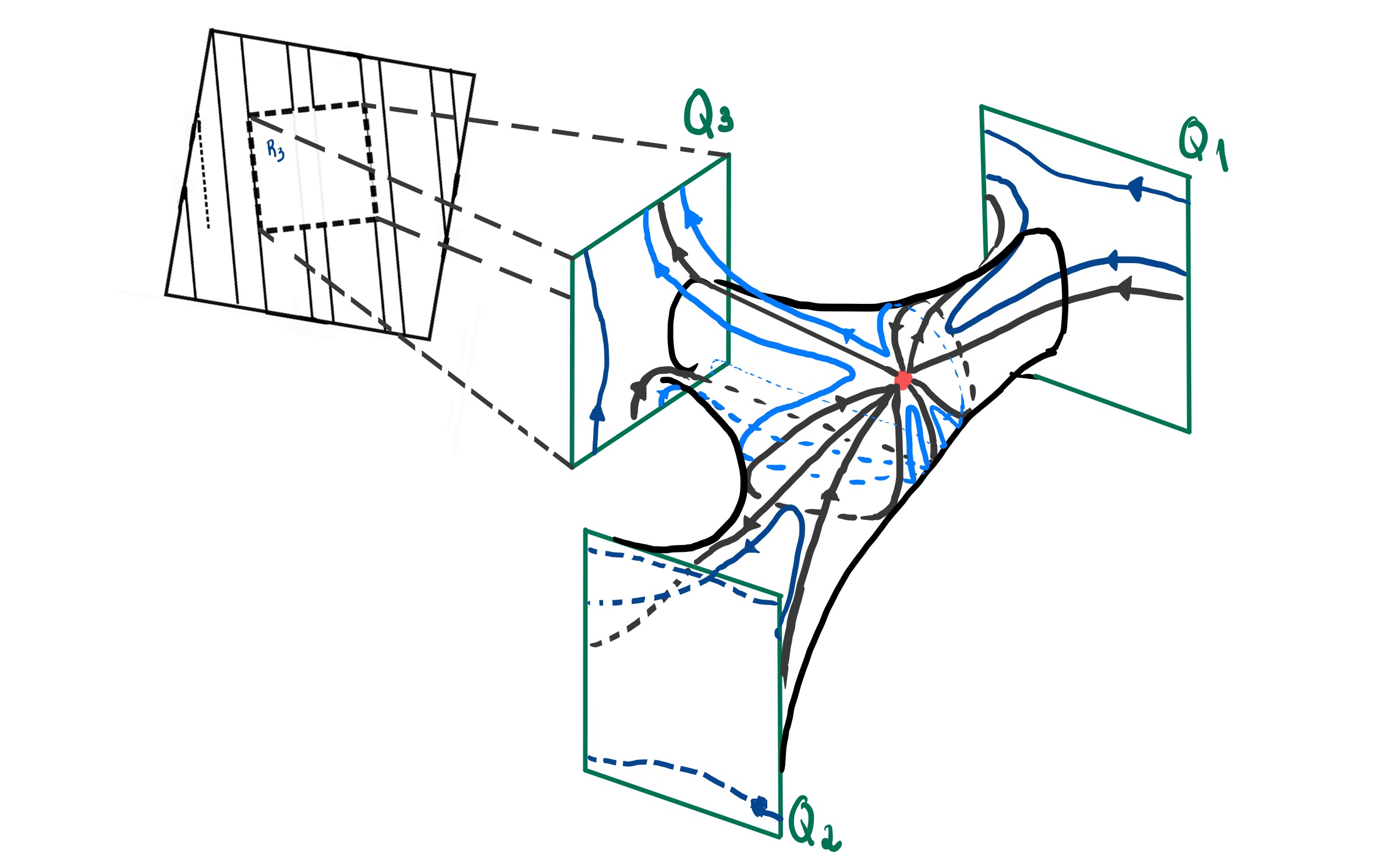}
	\caption{Identification of $R_3$ with $Q_3$ and flow lines of $X_{F}$ in $U$, when $g=3$.}\label{figure:glue_flow_3}
\end{figure}

This construction yields a closed surface $\Sigma$ of genus $g$ with a symplectic flow $\psi_t\colon \Sigma \rightarrow \Sigma$ which coincides with
\begin{itemize}
	\item the linear flow $\phi^t_i$ on $\mathbb{T}_i\backslash \mathring{R_i},\,i=1,\ldots,g$
	\item the Hamiltonian flow of $F$ on $U$. 
\end{itemize}

Consider the following $g$ open regions, $\widetilde{W^1}, \ldots, \widetilde{W^g}$, in the interior of $D$:
\begin{itemize}
	\item $\widetilde{W^1}$ is contained in the interior of $D\cap\{x<0\}$ and it is bounded by the set $\widetilde{L^1}:=\{(x,y)\in D |\; x<0 \;\wedge\; (x-\tan((g-2)a)y=0 \;\vee\; x-\tan((g+1)a)y=0)\}$,
	\item $\widetilde{W^2}$ is contained in the interior of $D\cap\{x>0\}$ and it is bounded by the set $\widetilde{L^2}:=\{(x,y)\in D|\; x>0 \;\wedge\; (x-\tan((g-2)a)y=0 \;\vee\; x-\tan((g+1)a)y=0)\}$ and,
	\item $\widetilde{W^j}:= \{(x,y)\in D|\, (x,y)\in \widetilde{W_{-}^j} \; \vee\;(x,-y)\in \widetilde{W_{-}^j} \}$, for $j=3,\ldots, g$, where
	\item[] $\widetilde{W_{-}^j}$ is contained in the interior of $D\cap\{y<0\}$ and it is bounded by the set $\widetilde{L_{-}^j}:= \{ (x,y)\in D|\; y<0 \; \wedge\; (x-\tan((g+(2j-5))a)y=0 \; \vee\; x-\tan((g+(2j-3))a)y=0)\}$. 
\end{itemize}
Regions $\widetilde{W^1}, \ldots, \widetilde{W^g}$ are illustrated in Figure~\ref{figure:regions_W2}, when $g=2$, and Figure~\ref{figure:regions_W}, when $g=3$.

\begin{figure}[htb!]
	\includegraphics[scale=0.13,trim={0cm 0cm 0cm 0cm}]{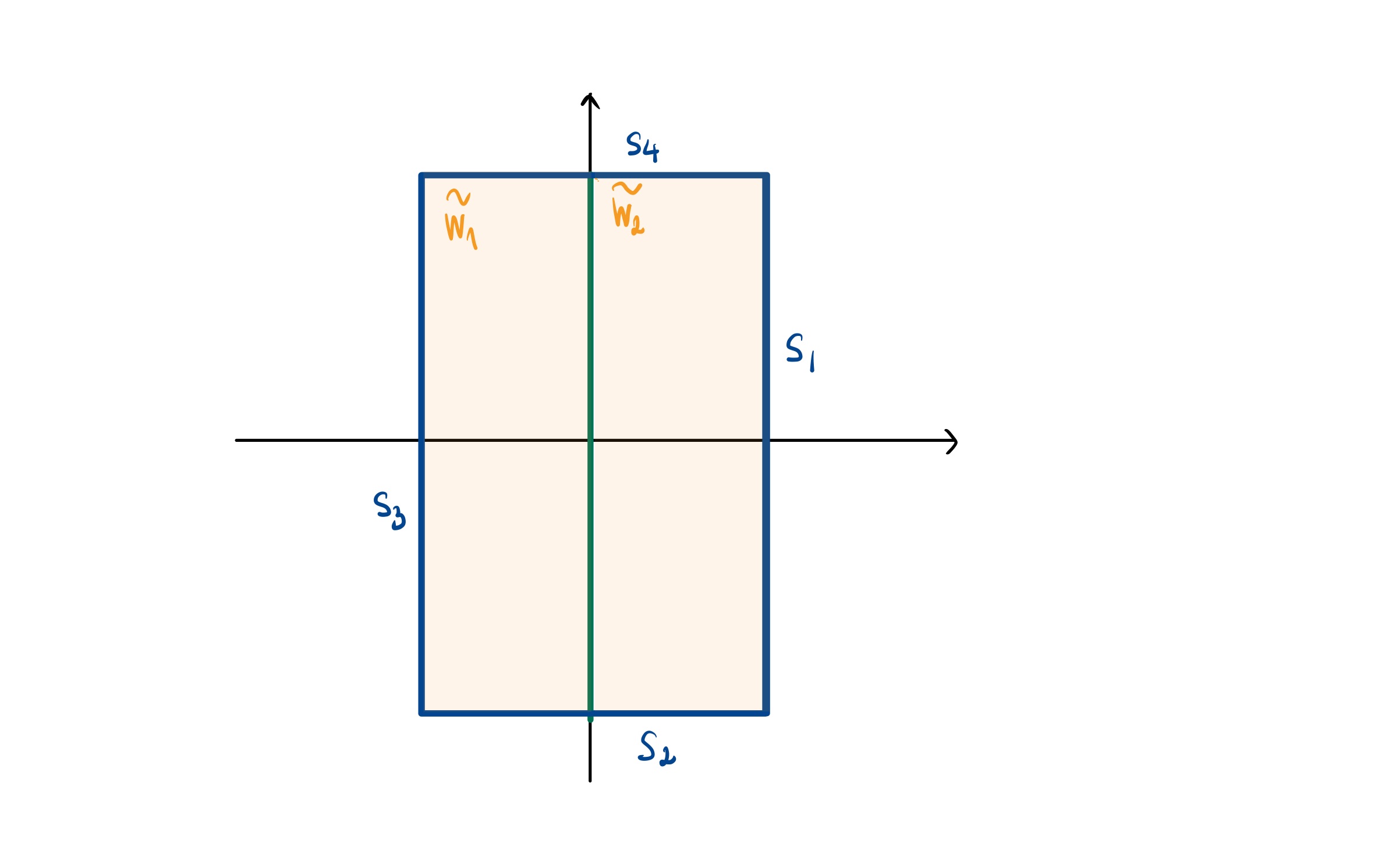}
	\caption{Regions $\widetilde{W^1}$ and $\widetilde{W^2}$, when $g=2$.}\label{figure:regions_W2}
\end{figure}

\begin{figure}[htb!]
	\includegraphics[scale=0.145,trim={0cm 10cm 0cm 0cm}]{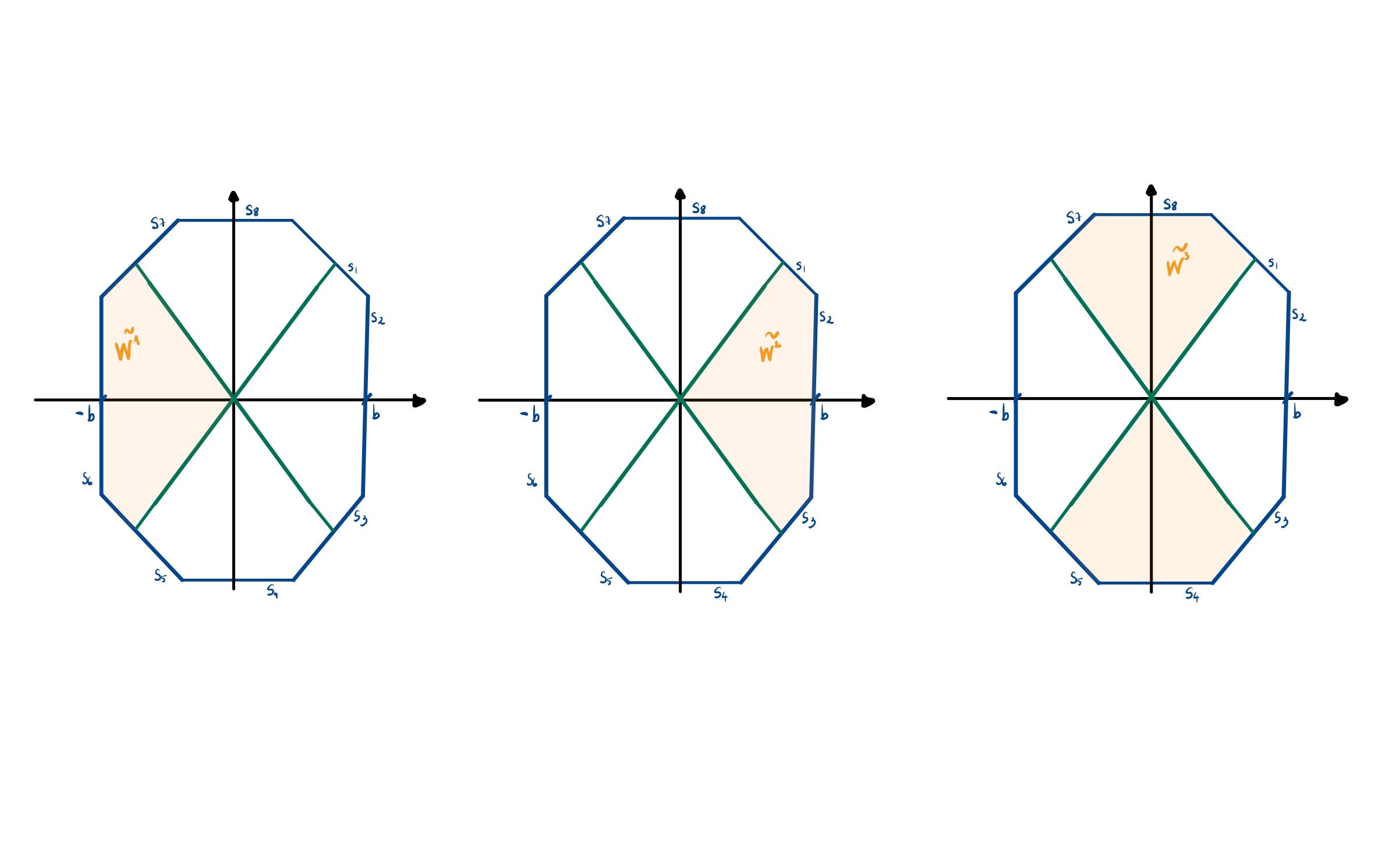}
	\caption{Regions $\widetilde{W^1}$ (left), $\widetilde{W^2}$ (center) and $\widetilde{W^3}$ (right), when $g=3$.}\label{figure:regions_W}
\end{figure}

For each $i=1,\ldots,g$, let $W^i: = \{(x,y,z)\in U|\, (x,y,z)\in P^i \text{ or } (x,y,z) \in \rho(\widetilde{W^i})\}$ and $L^i:= \rho(\widetilde{L^i})$, where $\widetilde{L^i}:=\{(x,y)\in D|\; (x,y) \in \widetilde{L_{-}^i} \text{ or } (x,-y) \in \widetilde{L_{-}^i}\}$. Then, each flow line of $\psi_t$ lies entirely

\begin{enumerate}
	\item\label{item1} either on $U\cap L^i$, for some $i\in \{1,\ldots, g\}$
	\item\label{item2} or on $\mathbb{T}_i\backslash \mathring{R_i} \cup W^i=:V^i$, for some $i\in \{1,\ldots, g\}$.
\end{enumerate}
We observe that a flow line of $\psi_t$ does not intersect both $V^i$ and $V^j$, $i\not=j$. In $U$ (see case~\eqref{item1}), $\psi_t$ has one degenerate fixed point of index $2-2g$ and no other periodic orbits; this fixed point lies in the intersection $\bigcap_{i=1}^{g} L_i \cap U$. In $V^1$ (see case~\eqref{item2}, with $i=1$), $\psi_t$ has no periodic orbits. In fact, by construction, when a flow line of $\psi_t$ given by the irrational flow $\phi_1^t$ reaches $R_1$, it will
\begin{itemize}
	\item either stay on $U$ and converge to the (unique) fixed point
	\item or cross $R_1$ again after some time and continue in the same flow line of $\phi^t_1$ when exiting $\mathbb{T}_1\backslash \mathring{R_1}$ (since at $Q_1$ the Hamiltonian is given by the height function).
\end{itemize}
This property together with the fact that $\phi^t_1$ is an irrational linear flow imply the non existence of (long) periodic orbits of $\psi_t$ on $V^1$. Cases in \eqref{item2} with $i=2,\ldots, g$ are similar to the case in \eqref{item2} with $i=1$, and there are no periodic orbits of $\psi_t$ on neither of the sets $V^i$, $i=2,\ldots,g$.

Therefore, we have obtained a symplectic flow on $\Sigma$ with exactly one degenerate fixed point of index $2-2g$ and no other periodic orbits. 


\begin{remark}
	In Figure~\ref{figure:glue_flow_3}, although the lines are represented in different colors, they may represent the same flow line of $\psi_t$. 
\end{remark}

\subsection{Symplectic flows on $(\Sigma,\omega)$, $g\geq 2$, with a finite number of fixed points (with non-zero indices) and no other periodic orbits.}\label{section:partition}

\

The construction of $\psi_t$ on $(\Sigma,\omega)$ with exactly $2g-2$ fixed points (as in Theorem~\ref{thm:2g-2_fx_pts}) is very similar to the one presented in Section~\ref{section:construction_1fx_pt} and also has three steps: in step~$1$, we consider $g$ tori with irrational linear flows, in step~$2$, we construct a surface (with $g$ \emph{circle-ends}) and define on it a Hamiltonian flow with exactly $2g-2$ fixed points and, in step~$3$, we glue each torus to  one of the \emph{ends} of the constructed surface in order to obtain $\Sigma$ with the desired symplectic flow $\psi_t$. A draft of the construction of $\Sigma$ is, as in Section~\ref{section:construction_1fx_pt}, in Figure~\ref{figure:general}. The only difference between the present construction and the previous one is the Hamiltonian map defined in \eqref{eqn:Ham}; here, one substitutes that map by 
\begin{eqnarray}\label{eqn:H_g^0_new}
	H^0(x,y)=\prod_{j=1}^{h/2} \left(x-\tan(j a) y\right)+\delta x,
\end{eqnarray}
where $|\delta|\not=0$ is a \emph{small} real number. (For instance one can take $\delta=0.1$.)  

Flow lines of the new vector field $X_{H^0}$ are depicted in Figure~\ref{figure:flow_lines_H0_} when $g=2$ (left) and $g=3$ (right).

\begin{figure}[htb!]
	\includegraphics[scale=0.13, trim={0cm 0cm 0cm 0cm}]{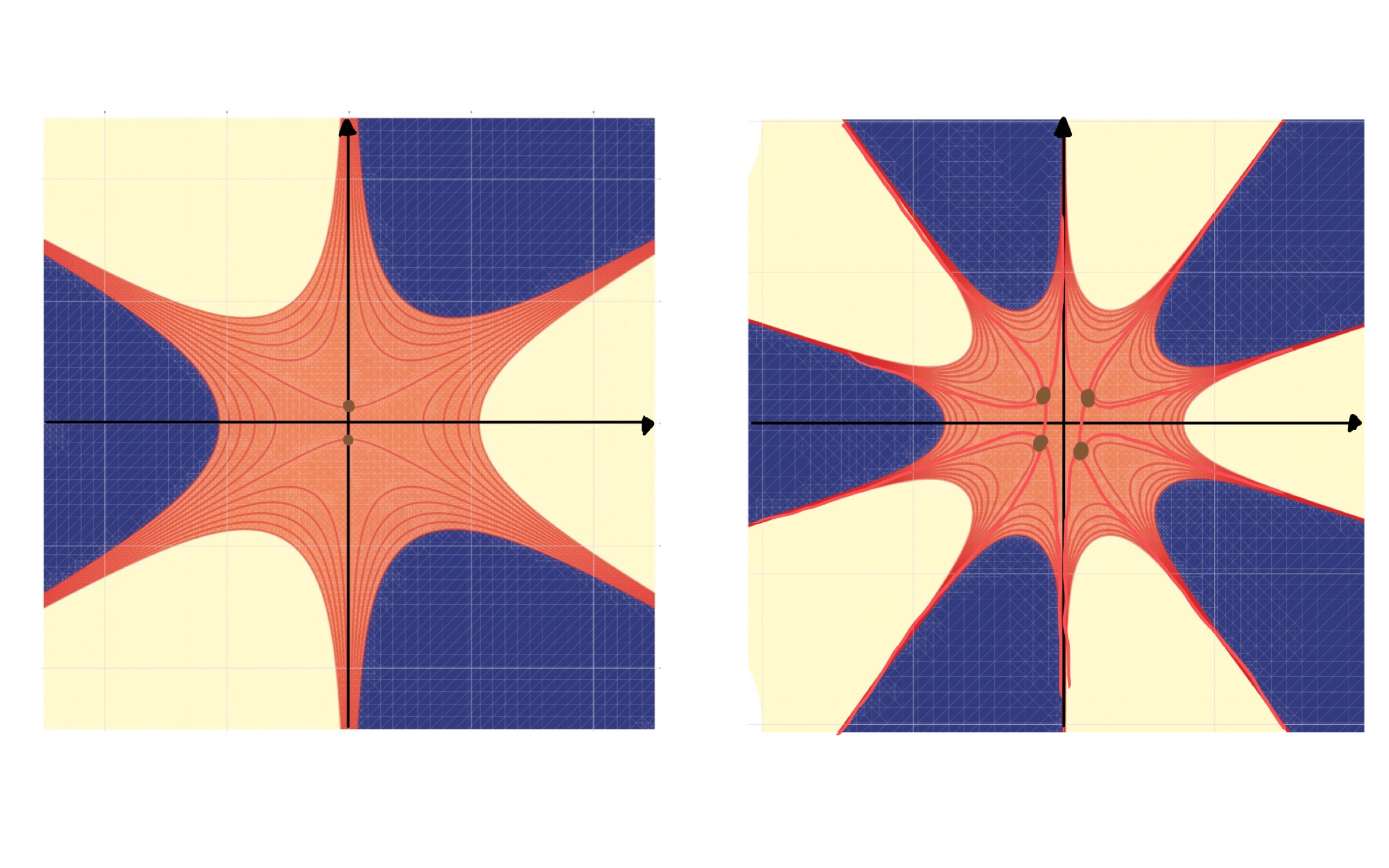}
	\caption{Flow lines of $H^0$, with $g=2$ (left) and $g=3$ (right)}\label{figure:flow_lines_H0_}
\end{figure}

In this case, the Hamiltonian system on $\R^2$ given by the new $X_{H^0}$ has exactly $2g-2$ fixed points. The index of each fixed point is $-1$ and the number of hyperbolic regions of each fixed point is $h=4$, as one can recognize in the examples in Figure~\ref{figure:flow_lines_H0_}. (Recall Remark~\ref{rmk:index_hyperbolicregion}.)

\begin{remark}
	The new Hamiltonian is a perturbation of the Hamiltonian defined in the previous construction: the multiple saddle is \emph{replaced} by $2g-2$ simple saddles. See  \cite[Example on page 321]{katok1995introduction} for a further discussion on this phenomenon.      
\end{remark}

In the procedure in Section~\ref{section:construction_1fx_pt}, after substituting the Hamiltonian \eqref{eqn:Ham} by the new $H^0$, reproduce the following steps of the procedure and obtain the desired symplectic flow $\psi_t$ with $2g-2$ fixed points and no other periodic orbits. Similarly to the previous construction, the obtained final symplectic flow $\psi_t$ is Hamiltonian on the \emph{connecting surface} $U$  and, \emph{locally}, the Hamiltonian function is given by \eqref{eqn:H_g^0_new}.

\begin{remark}
	When $g=2$, the obtained symplectic flow $\psi_t$ is \emph{equivalent} to the one obtained in \cite[Section~3A]{batoreo2018periodic}; namely, the two (hyperbolic) fixed points (with index $-1$) lie on the connecting surface $U$, which, in this case, is a cylinder, and, on each $\mathbb{T}_i\backslash \mathring{R_i}$ the flow is the restriction of an irrational linear flow on $\mathbb{T}_i, i=1,2$.  
	
\end{remark}

As mentioned, the construction in this section differs from the one in Section~\ref{section:construction_1fx_pt} by the convenient substitution of the \emph{local} Hamiltonian $H^0$ in $D{''}$. Trailing this idea and in order to obtain examples that illustrate the \emph{partition condition} in Remark~\ref{remark:partition}, one may consider a convenient Hamiltonian $H^0$ on $D{''}$ where its flow has exactly $k$ periodic orbits which are fixed points, $p_1,\ldots,p_k$, with $ind(p_i)=-a_i$ (where $(a_1,\ldots,a_k)$ is a partition of $2g-2$) so that the resulting flow of $X_{\hat{H}}$ on $D$ has the later property on the periodic orbits. We discuss the \emph{partition condition} in Remark~\ref{remark:partition} when the genus $g$ is \emph{small}, namely, when $g=2$ and $g=3$.

\paragraph{When $g=2$,} there are two partitions of $2g-2=2$: the first is $(2)$, where $k=1$, and the other is $(1,1)$, where $k=2$. As mentioned in Remark~\ref{remark:partition}, these 
correspond to the cases in Theorem~\ref{thm:1_fx_pt} and Theorem~\ref{thm:2g-2_fx_pts}, respectively.  

\paragraph{When $g=3$,} there are five partitions of $2g-2=4$: $(4)$, where $k=1$, $(1,3)$ and $(2,2)$, where $k=2$, $(1,1,2)$, where $k=3$, and $(1,1,1,1)$, where $k=4$. Cases $k=1$ and $k=4$ are considered in Theorems~\ref{thm:1_fx_pt} and~\ref{thm:2g-2_fx_pts}, respectively. The case $k=3$ can be obtained with the Hamiltonian 
\begin{eqnarray}\label{eqn:H_3^0_k3}
	H^0(x,y)= \prod_{j=1}^{h/2} \left(x-\tan(j a) y\right)-\delta x^3,
\end{eqnarray}
with $h=10, a=\pi/5$ and $\delta>0$. See Figure~\ref{figure:flow_lines_H0_k3}. (In fact, as the notation suggests, this Hamiltonian may also be considered for $g>3$ when $k=3$ and $a_i=1$, for $i=1,2$, and $a_i=2g-4$, for $i=3$.) 

\begin{figure}[htb!]
	\includegraphics[scale=0.11, trim={0 0cm 0 0cm}]{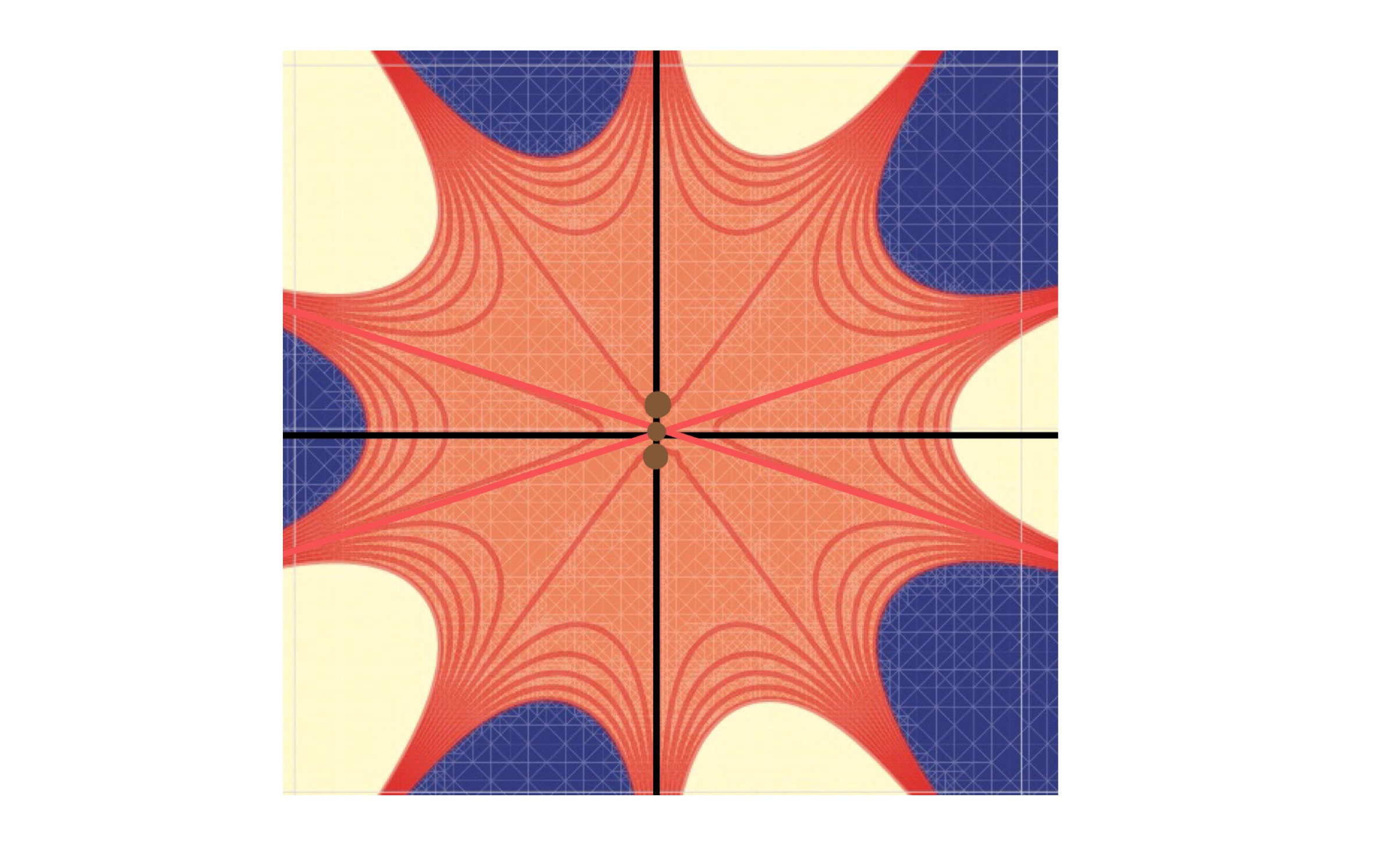}
	\caption{Flow lines of $H^0$ in \eqref{eqn:H_3^0_k3}, with $g=3$, $k=3$, $a_1=a_2=1$ and $a_3=2$.}\label{figure:flow_lines_H0_k3}
\end{figure}

The remaining case, $k=2$, is depicted in Figure~\ref{figure:flow_lines_H0_k2}. The picture on the left corresponds to the partition $(1,3)$ and illustrates the local Hamiltonian flow lines in $D_3{''}$. The picture on the right corresponds to the partition $(2,2)$ and illustrates the Hamiltonian flow lines on the surface $S_3$.      

\begin{figure}[htb!]
	\includegraphics[scale=0.13, trim={0cm 5cm 0 0cm}]{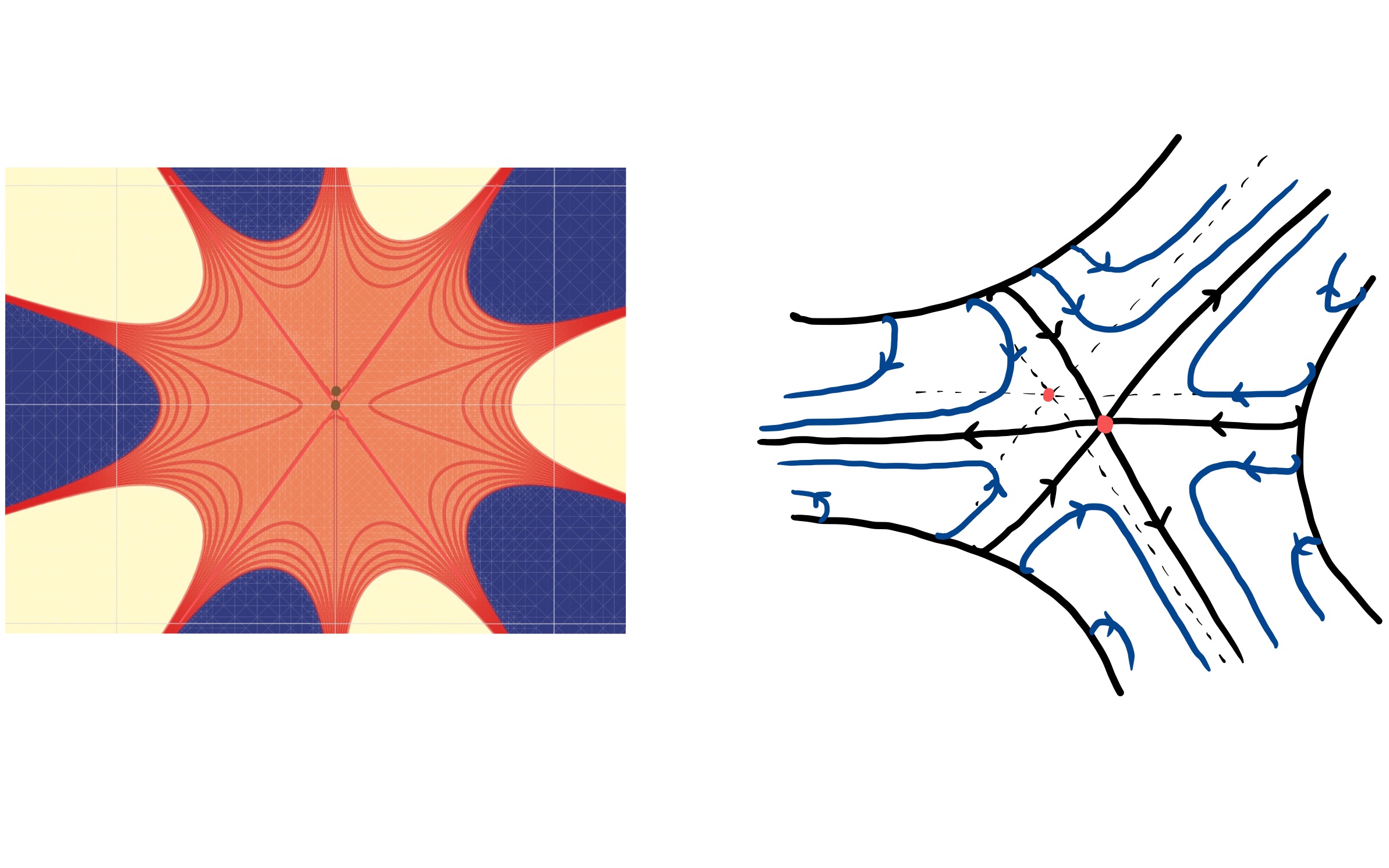}
	\caption{Left: partition $(1,3)$; Right: partition $(2,2)$.}\label{figure:flow_lines_H0_k2}
\end{figure}

\newpage
\bibliographystyle{alpha}
\bibliography{Non-Hamiltonian_HZ}

\newcommand{\etalchar}[1]{$^{#1}$}
\begin{thebibliography}{GHHM13}

\bibitem[AB21]{abouzaid2021arnold}
M.~Abouzaid and A.J. Blumberg.
\newblock Arnold conjecture and {M}orava {K}-theory.
\newblock {\em arXiv preprint arXiv:2103.01507}, 2021.

\bibitem[AL23]{atallah2023hz}
M.S. Atallah and H.~Lou.
\newblock {On the Hofer-Zehnder conjecture for semipositive symplectic
  manifolds}.
\newblock {\em arXiv preprint arXiv:2309.13791}, 2023.

\bibitem[Arn65]{arnold1965proprietes}
V.I. Arnold.
\newblock Sur une propriété topologique des applications globalment canonique
  de la méchanique classique.
\newblock {\em CR. Acad. Sci. Paris}, 261:3719--3722, 1965.

\bibitem[Arn13]{arnol2013mathematical}
V.I. Arnold.
\newblock {\em Mathematical methods of classical mechanics}, volume~60.
\newblock Springer Science \& Business Media, 2013.

\bibitem[Arn14]{arnold2014stability}
V.I. Arnold.
\newblock The stability problem and ergodic properties for classical dynamical
  systems.
\newblock {\em Vladimir I. Arnold-Collected Works: Hydrodynamics, Bifurcation
  Theory, and Algebraic Geometry 1965-1972}, pages 107--113, 2014.

\bibitem[AS23]{atallah2020hamiltonian}
M.S. Atallah and E.~Shelukhin.
\newblock Hamiltonian no-torsion.
\newblock {\em Geometry \& Topology}, 27(7):2833--2897, 2023.

\bibitem[Bat18]{batoreo2018periodic}
M.~Bator{\'e}o.
\newblock On periodic points of symplectomorphisms on surfaces.
\newblock {\em Pacific Journal of Mathematics}, 294(1):19--40, 2018.

\bibitem[BPS03]{biranpolsal}
P.~Biran, L.~Polterovich, and D.~Salamon.
\newblock {Propagation in Hamiltonian dynamics and relative symplectic
  homology}.
\newblock {\em Duke Mathematical Journal}, 119(1):65 -- 118, 2003.

\bibitem[BX22]{bai2022arnold}
S.~Bai and G.~Xu.
\newblock Arnold conjecture over integers.
\newblock {\em arXiv preprint arXiv:2209.08599}, 2022.

\bibitem[BX24]{bai2024hz}
S.~Bai and G.~Xu.
\newblock Franks dichotomy for toric manifolds, hofer-zehnder conjecture, and
  gauged linear sigma model.
\newblock {\em arXiv preprint arXiv:2309.07991}, 2024.

\bibitem[CGK04]{cieliebak2004symplectic}
K.~Cieliebak, V.~Ginzburg, and E.~Kerman.
\newblock Symplectic homology and periodic orbits near symplectic submanifolds.
\newblock {\em Commentarii Mathematici Helvetici}, 79(3):554--581, 2004.

\bibitem[CKR{\etalchar{+}}12]{collier2012symplectic}
B.~Collier, E.~Kerman, B.M. Reiniger, B.~Turmunkh, and A.~Zimmer.
\newblock {A symplectic proof of a theorem of Franks}.
\newblock {\em Compositio Mathematica}, 148(6):1969--1984, 2012.

\bibitem[Con84]{Co}
C.~C. Conley.
\newblock {\em Lecture at the University of Wisconsin}, April 6, 1984.

\bibitem[CZ84]{conley1984subharmonic}
C.~Conley and E.~Zehnder.
\newblock Subharmonic solutions and {M}orse theory.
\newblock {\em Physica A: Statistical Mechanics and its Applications},
  124(1-3):649--657, 1984.

\bibitem[FH03]{franks2003periodic}
J.~Franks and M.~Handel.
\newblock Periodic points of {H}amiltonian surface diffeomorphisms.
\newblock {\em Geometry \& Topology}, 7(2):713--756, 2003.

\bibitem[Flo86]{Floer1}
A.~Floer.
\newblock Proof of the {A}rnol'd conjecture for surfaces and generalizations to
  certain {K}\"ahler manifolds.
\newblock {\em Duke Math. J.}, 53(1):1--32, 1986.

\bibitem[Flo87]{Floer2}
A.~Floer.
\newblock {M}orse theory for fixed points of symplectic diffeomorphisms.
\newblock {\em Bull. Amer. Math. Soc. (N.S.)}, 16(2):279--281, 1987.

\bibitem[Flo89a]{Floer3}
A.~Floer.
\newblock Symplectic fixed points and holomorphic spheres.
\newblock {\em Comm. Math. Phys.}, 120(4):575--611, 1989.

\bibitem[Flo89b]{floer1989witten}
A.~Floer.
\newblock Witten's complex and infinite-dimensional {M}orse theory.
\newblock {\em Journal of differential geometry}, 30(1):207--221, 1989.

\bibitem[FO99]{fukaya1999arnold}
K.~Fukaya and K.~Ono.
\newblock {A}rnold conjecture and {G}romov--{W}itten invariant.
\newblock {\em Topology}, 38(5):933--1048, 1999.

\bibitem[Fra90]{franks1990periodic}
J.~Franks.
\newblock Periodic points and rotation numbers for area preserving
  diffeomorphisms of the plane.
\newblock {\em Publications Math{\'e}matiques de l'IH{\'E}S}, 71:105--120,
  1990.

\bibitem[Fra92]{franks1992geodesics}
J.~Franks.
\newblock Geodesics on ${S}^2$ and periodic points of annulus homeomorphisms.
\newblock {\em Inventiones Mathematicae}, 108(1):403--418, 1992.

\bibitem[Fra96]{franks1996area}
J.~Franks.
\newblock Area preserving homeomorphisms of open surfaces of genus zero.
\newblock {\em New York J. Math}, 2(1):19, 1996.

\bibitem[GG09a]{ginzburg2009action}
V.L. Ginzburg and B.Z. G{\"u}rel.
\newblock Action and index spectra and periodic orbits in {H}amiltonian
  dynamics.
\newblock {\em Geometry \& Topology}, 13(5):2745--2805, 2009.

\bibitem[GG09b]{GG09_genericexistence}
V.L. Ginzburg and B.Z. G{\"u}rel.
\newblock {On the generic existence of periodic orbits in Hamiltonian
  dynamics}.
\newblock {\em Journal of Modern Dynamics}, 3(4):595--610, 2009.

\bibitem[GG10]{ginzburg2010local}
V.L. Ginzburg and B.Z. G{\"u}rel.
\newblock Local {F}loer homology and the action gap.
\newblock {\em Journal of Symplectic Geometry}, 8(3):323--357, 2010.

\bibitem[GG12]{ginzburg2012conley}
V.L. Ginzburg and B.Z. G{\"u}rel.
\newblock {C}onley conjecture for negative monotone symplectic manifolds.
\newblock {\em IMRN: International Mathematics Research Notices}, 2012(8),
  2012.

\bibitem[GG15]{GG15_ccbeyond}
V.L. Ginzburg and B.Z. G{\"u}rel.
\newblock {The Conley Conjecture and Beyond}.
\newblock {\em Arnold Mathematical Journal}, 1:299--337, 2015.

\bibitem[GG19]{ginzburg2019conley}
V.L. Ginzburg and B.Z. G{\"u}rel.
\newblock {C}onley conjecture revisited.
\newblock {\em International Mathematics Research Notices}, 2019(3):761--798,
  2019.

\bibitem[GHHM13]{ginzburg2013closed}
V.~L Ginzburg, D.~Hein, U.~L. Hryniewicz, and L.~Macarini.
\newblock {Closed Reeb orbits on the sphere and symplectically degenerate
  maxima}.
\newblock {\em Acta Mathematica Vietnamica}, 38:55--78, 2013.

\bibitem[Gin10]{ginzburg2010conley}
V.L. Ginzburg.
\newblock The {C}onley conjecture.
\newblock {\em Annals of Mathematics}, pages 1127--1180, 2010.

\bibitem[GLCP23]{guiheneuf2023area}
P.-A. Guih{\'e}neuf, P.~Le~Calvez, and A.~Passeggi.
\newblock Area preserving homeomorphisms of surfaces with rational rotational
  direction.
\newblock {\em arXiv preprint arXiv:2305.05755}, 2023.

\bibitem[Har82]{hartman1982ordinary}
P.~Hartman.
\newblock {\em Ordinary Differential Equations: Second Edition}.
\newblock Classics in Applied Mathematics. Society for Industrial and Applied
  Mathematics (SIAM, 3600 Market Street, Floor 6, Philadelphia, PA 19104),
  1982.

\bibitem[Hei12]{hein2012conley}
D.~Hein.
\newblock The {C}onley conjecture for irrational symplectic manifolds.
\newblock {\em Journal of Symplectic Geometry}, 10(2):183--202, 2012.

\bibitem[Hin09]{hingston2009subharmonic}
N.~Hingston.
\newblock Subharmonic solutions of {H}amiltonian equations on tori.
\newblock {\em Annals of Mathematics}, pages 529--560, 2009.

\bibitem[HS95]{hofer1995floer}
H.~Hofer and D.~Salamon.
\newblock {Floer homology and Novikov rings}.
\newblock In {\em The Floer memorial volume}, pages 483--524. Springer, 1995.

\bibitem[HZ12]{hofer2012symplectic}
H.~Hofer and E.~Zehnder.
\newblock {\em Symplectic invariants and {H}amiltonian dynamics}.
\newblock Birkh{\"a}user, 2012.

\bibitem[JT23]{jang2023non}
D.~Jang and S.~Tolman.
\newblock Non-hamiltonian actions with fewer isolated fixed points.
\newblock {\em International Mathematics Research Notices}, 2023(7):6045--6077,
  2023.

\bibitem[KH95]{katok1995introduction}
A.~Katok and B.~Hasselblatt.
\newblock {\em Introduction to the Modern Theory of Dynamical Systems}.
\newblock Encyclopedia of Mathematics and its Applications. Cambridge
  University Press, 1995.

\bibitem[LC05]{LeCalvez05}
P.~Le~Calvez.
\newblock Une version feuillet\'ee \'equivariante du th\'eor\`eme de
  translation de {Brouwer}.
\newblock {\em Publications Math\'ematiques de l'IH\'ES}, 102:1--98, 2005.

\bibitem[LC06]{le2006periodic}
P.~Le~Calvez.
\newblock Periodic orbits of {H}amiltonian homeomorphisms of surfaces.
\newblock {\em Duke Mathematical Journal}, 133(1), 2006.

\bibitem[LC22]{LeCalvez_conservative2022}
P.~Le~Calvez.
\newblock Conservative surface homeomorphisms with finitely many periodic
  points.
\newblock {\em Journal of Fixed Point Theory and Applications}, 24, 04 2022.

\bibitem[LO95]{van1995symplectic}
H.V. L{\^e} and K.~Ono.
\newblock {Symplectic fixed points, the Calabi invariant and Novikov homology}.
\newblock {\em Topology}, 34(1):155--176, 1995.

\bibitem[LO20]{van2020floer}
H.V. L{\^e} and K.~Ono.
\newblock {Floer--Novikov cohomology and symplectic fixed points, revisited}.
\newblock {\em Proceedings of the International Geometry Center}, 13(4), 2020.

\bibitem[LRSV21]{le2021barcodes}
F.~Le~Roux, S.~Seyfaddini, and C.~Viterbo.
\newblock Barcodes and area-preserving homeomorphisms.
\newblock {\em Geometry \& Topology}, 25(6):2713--2825, 2021.

\bibitem[LT98]{liu1998floer}
G.~Liu and G.~Tian.
\newblock Floer homology and {A}rnold conjecture.
\newblock {\em Journal of Differential Geometry}, 49(1):1--74, 1998.

\bibitem[Mar01]{Markl}
M.~Markl.
\newblock Ideal perturbation lemma.
\newblock {\em Communications in Algebra}, 29(11):5209--5232, 2001.

\bibitem[MvK22]{moreno2022generalized}
A.~Moreno and O.~van Koert.
\newblock A generalized {P}oincar{\'e}--{B}irkhoff theorem.
\newblock {\em Journal of Fixed Point Theory and Applications}, 24(2):32, 2022.

\bibitem[Ono05]{ono05FN}
K.~Ono.
\newblock {Floer--Novikov cohomology and symplectic fixed points}.
\newblock {\em Journal of Symplectic Geometry}, 3(4):545 -- 563, 2005.

\bibitem[Ono06]{ono2006floer}
K.~Ono.
\newblock {Floer--Novikov cohomology and the flux conjecture}.
\newblock {\em Geometric \& Functional Analysis GAFA}, 16(5):981--1020, 2006.

\bibitem[Pie22]{pieloch2022fundamental}
A.~Pieloch.
\newblock Fundamental groups of rationally connected symplectic manifolds.
\newblock {\em arXiv preprint arXiv:2212.07882}, 2022.

\bibitem[Pra23]{prasad2023periodic}
R.~Prasad.
\newblock Periodic points of rational area-preserving homeomorphisms.
\newblock {\em arXiv preprint arXiv:2305.05876}, 2023.

\bibitem[PSS96]{piunikhin1996symplectic}
S.~Piunikhin, D.~Salamon, and M.~Schwarz.
\newblock Symplectic {F}loer-{D}onaldson theory and quantum cohomology.
\newblock {\em Contact and symplectic geometry (Cambridge, 1994)}, 8:171--200,
  1996.

\bibitem[Rez22]{rezchikov2022integral}
S.~Rezchikov.
\newblock Integral {A}rnol'd conjecture.
\newblock {\em arXiv preprint arXiv:2209.11165}, 2022.

\bibitem[Rua99]{ruan1999virtual}
Y.~Ruan.
\newblock Virtual neighborhoods and pseudo-holomorphic curves.
\newblock {\em Turkish Journal of Mathematics}, 23(1):161--232, 1999.

\bibitem[She22]{shelukhin2022hofer}
E.~Shelukhin.
\newblock On the {H}ofer-{Z}ehnder conjecture.
\newblock {\em Annals of Mathematics(2)}, 195(3):775--839, 2022.

\bibitem[Sla93]{slaminka93}
E.~E. Slaminka.
\newblock Removing index 0 fixed points for area preserving maps of
  two-manifolds.
\newblock {\em Transactions of the American Mathematical Society},
  340(1):429--445, 1993.

\bibitem[Sug21]{sugimoto2021hofer}
Y.~Sugimoto.
\newblock On the {H}ofer-{Z}ehnder conjecture for non-contractible periodic
  orbits in {H}amiltonian dynamics.
\newblock {\em arXiv preprint arXiv:2102.05273}, 2021.

\bibitem[SZ92]{salamon1992morse}
D.~Salamon and E.~Zehnder.
\newblock Morse theory for periodic solutions of {H}amiltonian systems and the
  {M}aslov index.
\newblock {\em Communications on pure and applied mathematics},
  45(10):1303--1360, 1992.

\bibitem[Tol17]{tolman2017non}
S.~Tolman.
\newblock {Non-Hamiltonian actions with isolated fixed points}.
\newblock {\em Inventiones mathematicae}, 210(3):877--910, 2017.

\bibitem[Vit99]{viterbo1999functors}
C.~Viterbo.
\newblock {Functors and computations in Floer homology with applications, I}.
\newblock {\em Geometric \& Functional Analysis GAFA}, 9(5):985--1033, 1999.

\end{thebibliography}
\end{document}